\DeclareMathAlphabet{\mathcal}{OMS}{cmsy}{m}{n} 
\newcommand{\e}{\epsilon}
\newcommand{\beq}{\begin{eqnarray}}
\newcommand{\beas}{\begin{eqnarray*}}
\newcommand{\enas}{\end{eqnarray*}}
\newcommand{\bea}{\begin{eqnarray}}
\newcommand{\ena}{\end{eqnarray}}
\newcommand{\bms}{\begin{multline*}}
\newcommand{\ems}{\end{multline*}}
\newcommand{\bc}{\begin{center}}
\newcommand{\ec}{\end{center}}
\newcommand{\E}{\mathbb{E}}
\newcommand{\A}{\mathcal{A}}
\newcommand{\qmq}[1]{\quad \mbox{#1} \quad}
\newcommand{\qm}[1]{\quad \mbox{#1}}
\newcommand{\s}{\mathbf{s}}
\newcommand{\mz}{\mathbf{z}}
\newcommand{\Bvert}{\left\vert\vphantom{\frac{1}{1}}\right.}
\newcommand{\journal}[1]{} 
\newcommand{\ignore}[1]{}
\newcommand{\U}{{\mathbf U }}
\title{On Strong Embeddings by Stein's Method}
\author{Chinmoy Bhattacharjee and Larry Goldstein}
\affil{Department of Mathematics, University of Southern California}
\date{} 
\begin{document}

\maketitle

\begin{abstract}
Strong embeddings, that is, couplings between a partial sum process of a sequence of random variables and a Brownian motion, have found numerous applications in probability and statistics. We extend Chatterjee's novel use of Stein's method for $\{-1,+1\}$ valued variables to a general class of discrete distributions, and provide $\log n$ rates for the coupling of partial sums of independent variables to a Brownian motion, and results for coupling sums of suitably standardized exchangeable variables to a Brownian bridge.
\end{abstract}

\section{Introduction}

\newtheorem{thm}{Theorem}[section]

Let $\e_1, \e_2 \dots$ be a sequence of independent random variables distributed as $\e$, a mean zero, variance one random variable. Letting $S_k= \sum_{i=1}^k\epsilon_i, k=1,2, \dots$, be the corresponding sequence of partial sums, Donsker's invariance principle \cite{donsker1952justification}, see also \cite{billingsley2013convergence},
implies that the random continuous function
\beas
X_n(t)=\frac{1}{\sqrt n}(S_{[nt]}+(nt-[nt])\e_{[nt]+1}), \quad  0 \le t \le 1
\enas 
converges weakly to a Brownian motion process $(B_t)_{0 \le t \le 1}$. One way to study the quality of the approximation of $X_n(t)$ by $B_t$ is to determine a `slowly increasing' sequence $f(n)$ such that there exists an embedding of both processes on a common probability space such that
	\beas
	\max_{0 \le k\leq n}|S_k-B_k| = O_p(f(n)).
	\enas
Finding the smallest achievable order of $f(n)$ has been a very important question in the literature.

The rate $(n \log \log n)^{1/4}(\log n)^{1/2}$ was achieved by Skorokhod \cite{skorokhod1961issledovaniia}, also see its translation \cite{skhorokhod2014studies} and Strassen \cite{strassen1967} assuming $\E\e^4 < \infty$ using Skorokhod embedding, and Kiefer \cite{kiefer1969deviations} showed that this rate was optimal under the finite fourth moment condition. 
Cs{\"o}rg{\H{o}} and R{\'e}v{\'e}sz \cite{csorgHo1975new} made improvements to the rate under additional moment assumptions. See the survey paper by Ob{\l}{\'o}j \cite{obloj2004skorokhod} and \cite{csorgo2014strong} for a more detailed account.

The celebrated KMT approximation by  Koml{\'o}s, Major and Tusn{\'a}dy (\cite{komlos1975approximation}, \cite{komlos1976approximation}) achieved the rate $\log n$ under the condition that $\e$ have a finite moment generating function in a neighborhood of zero. To state their result precisely we make the following definition.

We say Strong Embedding (SE) holds for the mean zero, variance one random variable $\e$ if there exist constants $C, K$, and $\lambda$ such that for all $n=1,2,\ldots$ the partial sums $S_k= \sum_{i=1}^k\epsilon_i, k=1,\ldots,n$ of a  sequence $\e_1, \e_2 \dots$ of independent random variables distributed as $\e$, and a standard Brownian motion $(B_t)_{t\geq 0}$ can be constructed on a joint probability space such that
\bea \label{eq:goal}
P\left(\max_{0 \le k\leq n}|S_k-B_k|\geq C\log n +x\right)\leq K e^{-\lambda x} \qmq{for all $x \ge 0$.}
\ena
We adopt the standard empty sum convention whereby $S_0=0$.

\begin{thm}[KMT approximation \cite{komlos1975approximation}]\label{KMT}
	SE holds for $\e$ satisfying $\E \exp \theta |\e| < \infty$ for some $\theta >0$.
\end{thm}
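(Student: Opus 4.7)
The plan is to follow the dyadic KMT scheme but to replace the Komlós--Major--Tusnády quantile coupling by a Stein's method coupling, in the spirit of Chatterjee's treatment of the symmetric $\pm 1$ case. After padding $n$ up to the next power of $2$ (an $O(1)$ adjustment), I would build $(S_k)_{0 \le k \le n}$ and $(B_k)_{0 \le k \le n}$ jointly by a top-down recursion on dyadic subintervals. One first samples the terminal pair $(S_n, B_n)$; then, conditional on $(S_n, B_n)$, samples the midpoint pair $(S_{n/2}, B_{n/2})$; then, conditional on these, the quarter-point pairs; and so on, through $m = \log_2 n$ dyadic levels. By design the per-level coupling errors telescope, so that the total discrepancy $\max_k |S_k - B_k|$ is bounded by a sum of $m$ level errors, each of constant order, giving the $C \log n$ term in \eqref{eq:goal}.

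The core analytic step is the coupling at each dyadic level. At level $j$, one must couple the conditional law of $S_{k+2^{j-1}} - S_k$ given $S_{k+2^{j}} - S_k = s$ (a sum of $2^{j-1}$ independent copies of $\e$ conditioned on the total sum of $2^{j}$ of them equalling $s$) to the conditional law of $B_{k+2^{j-1}}-B_k$ given $B_{k+2^{j}}-B_k=s$, which is Gaussian with mean $s/2$ and variance $2^{j-2}$. For $\pm 1$ variables this reduces to coupling a Hypergeometric-type law to a discretised Gaussian; Chatterjee achieves it via an exchangeable pair and Stein's method. To extend this, I would build a Stein exchangeable pair for the conditional midpoint by swapping one summand of the left half with one summand of the right half (with the endpoint fixed), run Stein's method for the Gaussian, and translate the resulting Wasserstein/Kolmogorov bound into a coupling via a quantile transport. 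The desired per-level error is $O(1)$ with a Cramér-type tail, valid uniformly over endpoints $s$ in the range $|s| \lesssim 2^{j/2} \log n$, where exponential-moment tilting controls the non-Gaussian corrections.

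Once each conditional coupling has per-step error $\Delta_j$ satisfying $P(\Delta_j \ge y) \le K' e^{-\lambda' y}$ with constants independent of $j$, summing the triangle inequality over $j=1,\dots,m$ yields $\max_{0 \le k \le n} |S_k - B_k| \le \sum_{j=1}^{m} \Delta_j^{(k)}$ at each dyadic node, with a union bound over the $O(n)$ nodes absorbed into a logarithmic shift of the location parameter. A standard maximal/chaining argument extends the bound from dyadic $k$ to arbitrary $0 \le k \le n$, since between consecutive dyadic times the process $S - B$ moves by at most $|\e_{k+1}| + |B_{k+1}-B_k|$, whose maximum over $k \le n$ is $O(\log n)$ with exponential tail under the exponential-moment hypothesis.

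The hard part will be the Stein coupling at each level with the \emph{correct} exponential dependence in the endpoint $s$, uniformly across dyadic scales. For $\pm 1$ variables the conditional midpoint has an explicit hypergeometric form; for a general $\e$ with only $\E \exp \theta|\e| < \infty$, one needs a quantitative local/conditional central limit theorem -- a ratio bound between the density of a sum of $2^j$ independent copies of $\e$ at $s$ and the corresponding Gaussian density -- in the full Cramér window $|s| \lesssim 2^{j/2} \log n$, with errors that remain summable in $j$. Producing this through a Stein-type argument, while handling lattice/arithmetic effects of discrete $\e$ and keeping constants universal across levels, is the main obstacle; the telescoping, maximal inequality, and union-bound arguments that assemble the final tail bound are essentially bookkeeping once the single-level Stein coupling is in place.
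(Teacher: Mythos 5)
Theorem~\ref{KMT} is stated in this paper only for context: it is the classical KMT theorem, cited from \cite{komlos1975approximation}, and the paper never proves it. What the paper \emph{does} prove by Stein's method is the strictly weaker Theorem~\ref{thm3}, where $\e$ is confined to a finite set $\A$ with $0\notin\A$ and $\E\e^3=0$; the authors explicitly flag (quoting Chatterjee) that they do not know how to push the Stein-coefficient approach through to general $\e$ with merely a finite exponential moment. Your proposal, as written, is a plan to prove the full KMT theorem by grafting Stein's method onto the dyadic recursion, which is in the same spirit as the paper's induction in Theorem~\ref{thm4.1'} but targets a much stronger conclusion than the paper achieves.

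The plan has two genuine gaps that are not bookkeeping. First, translating a Wasserstein or Kolmogorov bound from Stein's method into a \emph{coupling with exponential tails} via ``quantile transport'' does not work: Berry--Esseen-type bounds give $O(m^{-1/2})$ uniform error, and the quantile coupling built from such a bound has tails that are at best polynomial in the error, not exponential. The heart of KMT is precisely the Tusn\'ady-type quantile-coupling lemma, which rests on a sharp local CLT (a multiplicative density-ratio estimate in the Cram\'er zone), not on an integral-metric bound; Stein's method in its usual formulations does not produce this kind of pointwise density control, and nothing in your sketch supplies it. Second, the Stein-coefficient machinery you would be invoking --- Chatterjee's Theorem~2.1, appearing here as Theorem~\ref{thm1} --- requires the Stein coefficient $T$ to be almost surely bounded. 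The paper's construction of $T$ (via the zero-bias smoothing of Lemma~\ref{lem:smoothing} and the boundedness of $h_Y$ in Lemma~\ref{h_Y}) relies essentially on $\e$ being bounded; for $\e$ with merely an exponential moment, $T$ is unbounded and the coupling inequality does not apply. You acknowledge that the per-level coupling ``is the main obstacle,'' but that obstacle is exactly the content of the theorem --- the dyadic telescoping, union bound, and chaining around it are standard. As it stands, the proposal sketches the architecture of KMT but does not close the step that distinguishes it from a routine invariance-principle argument.
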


Results by B{\'a}rtfai \cite{bartfai1966bestimmung}, see \cite{Zaitsev_estimatesfor}, show that the rate in \eqref{eq:goal} is best possible under the finite moment generating function condition.
	A multidimensional version of the KMT approximation was proved by Einmahl \cite{einmahl1989extensions}, from which Zaitsev (\cite{zaitsev1996estimates}, \cite{zaitsev1998multidimensional}) removed a logarithmic factor. For extensions to stationary sequences see the history in \cite{berkes2014komlos}, where dependent variables of the form $X_k=G(\ldots,\e_{k-1},\e_k,\e_{k+1},\ldots)$ for $\e_i, i \in \mathbb{Z}$ i.i.d. are considered.
	Strong embedding results have a truly extensive range of applications that includes empirical processes, non-parametric statistics, survival analysis, time series, and reliability; for a sampling see the texts \cite{shorack2009empirical} \cite{csorgo2014strong}, or the articles \cite{csorgo1984komlos}, \cite{wu2007inference} and \cite{parzen1979nonparametric}.

Here we take the approach to the KMT approximation introduced by Chatterjee \cite{chatterjee2012new} that has its origins in Stein's method \cite{MR0402873} and
 appears simpler, and is possibly easier to generalize, than the dyadic approximation argument of \cite{komlos1975approximation}. This alternative approach depends on the use of Stein coefficients, also known as Stein kernels, that first appeared in 
the work of Cacoullos and Papathanasiou \cite{cacoullos1992lower}. In some sense, a Stein coefficient $T$ for a mean zero random variable $W$ neatly encodes all information regarding the closeness of $W$ to the mean zero normal variable $Z$ having variance $\sigma^2$. Theorem \ref{thm1} below, from  \cite{chatterjee2012new}, demonstrates that a coupling of $W$ and $Z$ exists whose quality can be evaluated uniquely as a function of $T$ and $\sigma^2$. Theorem \ref{thm2}, that demonstrates Theorem \ref{KMT} for the special case of simple symmetric random walk, was proved in \cite{chatterjee2012new} applying this approach.

\begin{thm}[Chatterjee \cite{chatterjee2012new}] \label{thm2}
	SE holds for $\epsilon$ a symmetric random variable with support $\{-1,+1\}$.
\end{thm}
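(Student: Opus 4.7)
The plan is to follow the dyadic scheme of \cite{chatterjee2012new} and invoke Theorem~\ref{thm1} as the coupling engine at every scale. Assume without loss of generality that $n=2^m$. I will construct the joint law of $(S_k)_{0 \le k \le n}$ and $(B_t)_{0 \le t \le n}$ by specifying $S_k$ and $B_k$ at the dyadic integers $k = j 2^i$ in order, from the coarsest scale $i=m$ down to the finest $i=0$, and finally completing $B$ between consecutive integers using independent Brownian bridges conditional on the already constructed integer values.

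At the coarsest level I would couple the single variable $W = S_n$ with $B_n \sim N(0,n)$. The $\{-1,+1\}$ structure makes this step particularly clean: the exchangeable-pair construction $W' = W - \epsilon_I + \epsilon'_I$, with $I$ uniform on $\{1,\dots,n\}$ and $\epsilon'_I$ an independent resample, satisfies $\E[W'-W\mid W] = -\tfrac{1}{n}W$ and $\E[(W-W')^2 \mid W] = 2$, so it yields the \emph{deterministic} Stein coefficient $T \equiv n$. Plugging this into Theorem~\ref{thm1} gives a coupling of $S_n$ with $B_n$ whose deviation has a sub-exponential tail of order $\log n$.

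Next I would iterate dyadically. Given $S_a, S_b$ and $B_a, B_b$ already coupled at block endpoints $a=(j-1) 2^{i+1}$, $b= j 2^{i+1}$, I would couple the centered midpoint increment
\[
M \;=\; S_{(a+b)/2} - \tfrac{1}{2}(S_a + S_b)
\]
with the Brownian midpoint deviation $B_{(a+b)/2} - \tfrac{1}{2}(B_a + B_b) \sim N(0, 2^{i-1})$, which is independent of everything constructed so far. Conditionally on $(S_a,S_b)$, the integer $M$ is (up to an affine shift) a hypergeometric count arising from a uniformly random equal split of the $2^{i+1}$ block symbols; on the typical event $|S_b-S_a| \lesssim 2^{(i+1)/2}$ it has variance of order $2^i$. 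An exchangeable-pair Stein coefficient obtained by swapping a uniformly chosen symbol in the first half with one in the second is again deterministically bounded of order $2^i$ and tightly concentrated on the typical event. Applying Theorem~\ref{thm1} conditionally on $(S_a,S_b)$ couples $M$ with the Brownian midpoint deviation up to a sub-exponential error of order $\log 2^i \lesssim i$.

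Finally I combine the $m$ scales. A direct induction on depth using the midpoint identity
\[
S_{(a+b)/2} - B_{(a+b)/2} \;=\; \tfrac{1}{2}\bigl[(S_a-B_a)+(S_b-B_b)\bigr] + E_{a,b},
\]
where $E_{a,b}$ is the midpoint coupling error, yields $\max_k|S_k - B_k| \le |S_n - B_n| + \sum_{i=0}^{m-1} R_i$, with $R_i$ the maximum midpoint coupling error over the $2^{m-i-1}$ blocks at level $i$. Each $R_i$ is the maximum of that many independent sub-exponential variables with centering $O(i)$, and a union bound gives $P(R_i \ge C_1 i + x) \le K_1 e^{-\lambda x}$; summing the tails over $i = 0, \dots, m-1$ and adding the coarse coupling bound for $|S_n - B_n|$ produces \eqref{eq:goal}. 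The main obstacle is the third step: controlling the conditional midpoint Stein coefficient uniformly on a high-probability event where the endpoint gap is typical, so that Theorem~\ref{thm1} delivers constants independent of the scale $i$, with the complementary low-probability event absorbed into the right-hand exponential tail of \eqref{eq:goal}. The $\{-1,+1\}$ hypothesis does the heavy lifting here, since it produces explicit and uniformly bounded Stein kernels at every level of the recursion, which is exactly the input Theorem~\ref{thm1} requires to yield the desired $\log n$ rate.
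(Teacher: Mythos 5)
Your proposal has a genuine and fatal gap at its core: you claim that for Rademacher sums the exchangeable-pair construction yields the \emph{deterministic} Stein coefficient $T \equiv n$, and you feed this into Theorem~\ref{thm1}. But Theorem~\ref{thm1} requires a Stein coefficient in the exact sense of~\eqref{eq1}, namely $\E[Wf(W)] = \E[Tf'(W)]$ for \emph{all} Lipschitz $f$, and a discrete random variable such as $S_n$ simply does not admit one. Concretely, taking $f(x)=x^3$ gives $\E[S_nf(S_n)] = \E[S_n^4] = 3n^2 - 2n$, while $n\,\E[f'(S_n)] = 3n\,\E[S_n^2] = 3n^2$, so $T\equiv n$ fails already on this polynomial. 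What the exchangeable pair gives is only the approximate linear-regression identity, not~\eqref{eq1}; as noted below~\eqref{eq4}, $T$ being a Stein coefficient is equivalent to $\sigma^{-2}\E[T\mid X]$ being the Radon--Nikodym derivative $d\mu^*/d\mu$, and since the zero-bias law $\mu^*$ is always absolutely continuous while the law $\mu$ of $S_n$ is atomic, that derivative does not exist. This is exactly the difficulty Chatterjee's smoothing device (and Lemmas~\ref{lem:smoothing} and~\ref{lem:tilde.smoothing} here) are designed to overcome: one must add an independent $Y$ with the $\e$-zero-bias distribution (for Rademacher, $Y\sim U[-1,1]$), producing an absolutely continuous $\widetilde S_n = S_n + Y$ for which the honest Stein coefficient $T=\sum_i\e_i^2 - S_nY + h_Y(Y)$ exists; one then pays an additional bounded price $|S_n - \widetilde S_n|\le 1$ in the coupling. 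The same issue recurs at your midpoint step, where the centered hypergeometric count is again discrete and has no Stein coefficient without smoothing; compare the construction of $Y=\sum_{d\in{\cal D}^+}Y_d$ in the proof of Theorem~\ref{thm3.2'}.

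A secondary problem is the accounting across scales. Your stated per-level error estimate ``sub-exponential of order $\log 2^i\lesssim i$'' is not what Theorem~\ref{thm1} (or Theorems~\ref{thm3.1'}--\ref{thm3.2'}) delivers; those give $O(1)$ coupling error with a uniform sub-exponential tail, independent of the block length. Taking maxima of the $2^{m-i-1}$ independent block errors at level $i$ then centers $R_i$ at $O(m-i)$, and $\sum_{i<m}R_i$ centers at $O(m^2)=O((\log n)^2)$, not $O(\log n)$. The paper avoids this by not summing maxima over levels at all: Theorem~\ref{thm4.1'} proves a moment-generating-function recursion, $\E\exp(\lambda\max_i|W_i-\eta Z_i|)$ is bounded via the decomposition $\max\{T_L+T,\,T_R+T\}$, the convexity inequality $\exp(x\vee y)\le e^x+e^y$, and two applications of Cauchy--Schwarz, which make the per-level cost an additive constant in the exponent and deliver exactly $C\log n$ after $\log_2 n$ halvings. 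You would need to replace your tail union bound by this exponential-moment induction to recover the optimal rate.
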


In this work, using the methods of  \cite{chatterjee2012new},
we generalize Theorem \ref{thm2} as follows. 
\begin{thm} \label{thm3}
	SE holds for $\epsilon$, any random variable with mean zero and variance 1 satisfying $\E\epsilon^3=0$, taking values in a finite set $\A$ not containing $0$.
\end{thm}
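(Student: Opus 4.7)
The overall plan is to mirror the structure of Chatterjee's proof of Theorem~\ref{thm2} in \cite{chatterjee2012new}, replacing the input specific to the $\{-1,+1\}$ simple random walk with a Stein coefficient construction that works for any finite support $\A$ with mean zero, unit variance, and vanishing third moment. Without loss of generality I take $n=2^L$ (the general case follows by a standard padding argument), and I build the strong embedding recursively by dyadic bisection of the time interval. At the coarsest level I couple $S_n$ to $B_n\sim N(0,n)$ via Theorem~\ref{thm1}; inductively, having coupled $S_j$ to $B_j$ for every $j$ that is a multiple of $2k$, I fill in the midpoints by coupling, for each dyadic block of length $2k$, the discrete midpoint fluctuation
\beas
D_j \;:=\; S_{j+k}-\tfrac12(S_j+S_{j+2k})
\enas
to the corresponding Gaussian bridge fluctuation $Y_j\sim N(0,k/2)$, both conditional on the endpoints.

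To invoke Theorem~\ref{thm1} at each bisection I need a Stein coefficient $T$ for $D_j$. Conditional on the multiset $M=\{\epsilon_{j+1},\ldots,\epsilon_{j+2k}\}$, the first half $\epsilon_{j+1},\ldots,\epsilon_{j+k}$ is a uniform size-$k$ sample without replacement from $M$, so $D_j$ is an affine function of the sample sum. I would generate an exchangeable pair $(D_j,D_j')$ by picking $I$ uniformly in $\{j+1,\ldots,j+k\}$ and $J$ uniformly in $\{j+k+1,\ldots,j+2k\}$ and swapping $\epsilon_I$ with $\epsilon_J$; the standard exchangeable-pair recipe then yields a Stein coefficient of the schematic form $T=\tfrac{k}{2}\,\widehat\sigma^2_M+R$, where $\widehat\sigma^2_M$ is the empirical variance of $M$ and $R$ is a remainder arising from the without-replacement sampling and the jumps of $D_j$ under the swap.

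The central estimate is the concentration $|T-k/2|=O(\sqrt{k})$ with Gaussian-type tails, which via Theorem~\ref{thm1} yields an $O(\log k)$ coupling error at each bisection step with exponential tails of the form $Ke^{-\lambda x}$. Since $\widehat\sigma^2_M$ is built from $2k$ i.i.d.\ mean-zero unit-variance samples with bounded support, $|\widehat\sigma^2_M-1|=O(1/\sqrt{k})$ with exponential tails by Bernstein-type bounds. The hypothesis $\E\epsilon^3=0$ enters here: it removes the leading-order term in the conditional second moment of $D_j$ that would otherwise couple $T$ linearly to $S_{j+2k}$, an endpoint whose magnitude can itself be as large as $\sqrt{k\log n}$; without this cancellation the per-level error would exceed $\log k$ and the dyadic sum would exceed $\log n$. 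Summing the per-level errors over the $L=\log_2 n$ dyadic scales, using the conditional independence across disjoint blocks given the endpoints to integrate the tails, then produces the announced bound (\ref{eq:goal}).

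I expect the main obstacle to be the sharp concentration of $T$ in the form required by Theorem~\ref{thm1}. In the $\pm 1$ setting of Theorem~\ref{thm2} the empirical variance $\widehat\sigma^2_M$ is deterministically equal to $1$ and the remainder $R$ has a very simple combinatorial expression, whereas for general finite $\A$ one must genuinely control a random $\widehat\sigma^2_M$ together with the nonlinear dependence of $R$ on the joint configuration of $M$ and the subsample. Making the role of the third-moment cancellation $\E\epsilon^3=0$ quantitatively precise at this step, and obtaining exponential rather than merely polynomial tails for the discrepancy $T-k/2$ so that Theorem~\ref{thm1} can be applied at every scale, is the principal new technical work beyond Chatterjee's $\pm 1$ argument.
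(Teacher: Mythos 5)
Your outline follows the same bisection--induction architecture as the paper's proof (split at $k\approx n/2$, couple the midpoint to a Gaussian through a Stein coefficient and Theorem~\ref{thm1}, recurse on the two halves, sum $O(\log)$ contributions over scales), but there is a genuine gap in the step you treat as routine: producing a Stein coefficient for the discrete midpoint fluctuation. Theorem~\ref{thm1} requires an \emph{exact} Stein coefficient, $\E[Wf(W)]=\E[Tf'(W)]$ for all Lipschitz $f$, whereas the ``standard exchangeable-pair recipe'' you invoke, $T=\tfrac{1}{2\lambda}\E[(W'-W)^2\mid W]$, satisfies this identity only up to a Taylor remainder; for lattice-valued $W$ that remainder does not vanish, nor is there a density against which to build a kernel as in \eqref{eq4}. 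This is exactly the obstacle Chatterjee flags in \cite{chatterjee2012new}, and the paper's essential new ingredient is the zero-bias smoothing Lemma~\ref{lem:smoothing}: one adds to the discrete sum an independent copy $Y$ of its zero-bias variable, uses the exact identity $\E[Xf(X+Y)]=\E[(X^2-XY)f'(X+Y)]$ to manufacture a genuine Stein coefficient for $W+Y$, applies Theorem~\ref{thm1} to $W+Y$, and pays an $O(1)$ price at the end since $|Y|\le B$. Without some such smoothing device, Theorem~\ref{thm1} cannot be invoked at any scale and the recursion does not start.

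Two ancillary points. The role you attribute to $\E\e^3=0$ is misplaced: the paper's bridge steps (Theorems~\ref{thm3.2'} and \ref{thm4.1'}) are carried out conditionally on the multiset of increments, under which $W_k=S_k-\tfrac{k}{n}S_n$ is automatically mean zero, so no third-moment hypothesis is needed there; $\E\e^3=0$ enters only the unconditional endpoint coupling of $S_n$ to $Z_n$ (Theorem~\ref{thm3.1'}), where via \eqref{eq2} it forces the zero-bias smoother $Y$ to have mean zero, as required by Lemma~\ref{h_Y}. And the variance mismatch between a block and its halves, which you acknowledge but do not resolve, is precisely why the induction must carry the penalty term $n(\gamma^2-\eta^2)^2$; controlling that penalty and keeping $\gamma$ bounded below so that Theorem~\ref{thm3.2'} applies with $\eta=\gamma$ is where the hypothesis $0\notin\A$ is actually used --- your proposal never invokes $0\notin\A$ at all.
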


 To prove our result we first provide a construction in the case where we have a finite number of variables and then extend to derive strong approximation for an infinite sequence. Such extensions have been studied in the context of the KMT theorem for summands with finite $p$-th moment in \cite{lifshits2000lecture} and also in \cite{chatterjee2012new}.

For the finite case we employ induction, as in \cite{chatterjee2012new}. The induction step requires extending Theorem 1.4 of  \cite{chatterjee2012new} from the special case where $\e$ is a symmetric variable taking values in $\{-1,1\}$. The generalization depends on the `zero-bias' smoothing method introduced in Lemma \ref{lem:smoothing}, which may be of independent interest as regards the construction of Stein coefficients. Theorem \ref{exchangeable} here is  a new result for the embedding of exchangeable random variables and a Brownian bridge.

\begin{thm}\label{exchangeable}
	For any positive integer $n$, let $\e_1, \e_2, \dots, \e_n$ be exchangeable random variables taking values in a finite set ${\cal A} \subset \mathbb{R}$. Let 
	\beas
	S_k=\sum\limits_{i=1}^k \e_i, \quad W_k=S_k-\frac{k}{n}S_n \qmq{and}  \gamma^2=\frac{1}{n}\sum\limits_{i=1}^n \e_i^2.
	\enas
	Then there exists a positive universal constant $C$, and for all $\nu>0$
	positive constants $K_{1},K_{2}$ and $\lambda_0$ depending only on $\A$ and $\nu$, such that for all $n \ge 1$ and $\eta \ge \nu$, a version of  $W_0, W_1, \dots, W_n$ and a standard Brownian bridge ${(B_t)}_{0\le t \le 1}$ exist on the same probability space and satisfy
	\begin{multline*}
	\E \exp(\lambda \max_{0\le k \le n}|W_k - \sqrt{n} \eta B_{k/n} |) \\
	\le \exp(C \log n) \E \exp \left( \frac{K_{1} \lambda^2 S_n^2}{n} + K_{2}\lambda^2 n (\gamma^2-\eta^2)^2\right) \,\,\mbox{for all $\lambda \le \lambda_0$.}
	\end{multline*}
	 Moreover, if $0 \not \in \A$, then there exist positive constants $K_1$ and $\lambda_0$ depending only on ${\cal A}$ such that
	\beas
	\E \exp(\lambda \max_{0\le k \le n}|W_k - \sqrt{n} \gamma  B_{k/n} |) \le \exp(C \log n) \E \exp \left( \frac{K_1 \lambda^2 S_n^2}{n}\right) \,\,\mbox{for all $\lambda \le \lambda_0$,}
	\enas
 and if in addition $\e_1,\ldots,\e_n$ are i.i.d. with zero mean, then there exists a positive $\lambda$ depending only on ${\cal A}$ such that
\beas
P\left(\max_{0\le k \le n}|W_k - \sqrt{n} \gamma  B_{k/n} | \ge \lambda^{-1} C \log n + x\right)  \le 2e^{-\lambda x} \qmq{for all $x \ge 0$.}
\enas 
\end{thm}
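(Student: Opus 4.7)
The plan is to reduce the bridge coupling to a Brownian-motion coupling and then adapt the inductive dyadic construction of Theorem~\ref{thm3} to the exchangeable, sampling-without-replacement setting. Writing $B^{\mathrm{br}}_t := B_t - tB_1$ for a standard Brownian motion $B$, we have the identity
\[
W_k - \sqrt{n}\,\eta\, B^{\mathrm{br}}_{k/n} \;=\; \big(S_k - \sqrt{n}\,\eta\, B_{k/n}\big) \;-\; \frac{k}{n}\big(S_n - \sqrt{n}\,\eta\, B_1\big),
\]
which implies $\max_{0\le k\le n}|W_k - \sqrt{n}\eta B^{\mathrm{br}}_{k/n}| \le 2\max_{0\le k\le n}|S_k - \sqrt{n}\eta B_{k/n}|$. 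So it suffices to couple the exchangeable process $(S_k)$ with $\sqrt{n}\eta B_{\cdot/n}$ with controlled sup-norm exponential moments.

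First I would work conditionally on the unordered multiset $\{\e_1,\dots,\e_n\}$. By exchangeability the sequence is then a uniformly random permutation of a fixed $\A$-valued sample, and both $S_n$ and $\gamma^2$ are constants on this event. The problem becomes a purely combinatorial finite-$n$ embedding for partial sums of a random permutation, which I would handle by the dyadic induction underlying Theorem~\ref{thm3}: at each node of the dyadic tree on $\{0,1,\dots,n\}$, given the partial sums at the two endpoints of a sub-interval, couple the midpoint partial sum with a Gaussian using a Stein coefficient obtained from the zero-bias smoothing Lemma~\ref{lem:smoothing}. The structural change from Theorem~\ref{thm3} is that the conditional law of the midpoint given the endpoint sums is a sampling-without-replacement law rather than an i.i.d.\ convolution; because $\A$ is finite, the smoothed Stein coefficient for this law can still be bounded uniformly in the multiset in terms of $\gamma^2$ and a range constant depending only on $\A$.

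Two differences with Theorem~\ref{thm3} produce the extra terms in the bound. First, one compares the discrete midpoint to a Gaussian of variance proportional to $\eta^2$ rather than $\gamma^2$; the variance mismatch propagates additively across the $\log_2 n$ dyadic levels and accumulates as the factor $n(\gamma^2-\eta^2)^2$. The hypothesis $\eta\ge\nu$ is used only to keep the Gaussian-coupling constants in Lemma~\ref{lem:smoothing} bounded, and this dependence enters $K_1,K_2,\lambda_0$. Second, when reassembling the bridge from the Brownian-motion coupling, the endpoint discrepancy $S_n - \sqrt{n}\eta B_1$ appears, and after integrating the conditional bound over the multiset and absorbing this discrepancy into an exponential weight via the elementary inequality $\lambda|x|\le \lambda^2 x^2/2 + 1/2$ one recovers the stated $K_1\lambda^2 S_n^2/n$ factor. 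When $0\notin\A$, choosing $\eta=\gamma$ kills the mismatch term; since $\gamma^2\ge\min_{a\in\A}a^2>0$ automatically, the lower bound on $\eta$ becomes free and the constants lose their $\nu$-dependence, yielding the second bound. In the i.i.d.\ zero-mean case, boundedness of $\e$ makes $S_n/\sqrt n$ sub-Gaussian, so $\E\exp(K_1\lambda^2 S_n^2/n)\le 2$ for $\lambda$ small enough; applying Markov's inequality to the second bound then yields the stated tail inequality with multiplicative constant $2$.

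The main obstacle I anticipate is the sampling-without-replacement step inside the dyadic induction. In Chatterjee's symmetric $\pm1$ case, the conditional midpoint given the endpoint sum is hypergeometric with a closed-form Stein kernel; here the conditional law is supported on a much less structured set of compositions of an $\A$-valued multiset, and the smoothed Stein coefficient must be controlled uniformly across all multisets while still delivering an approximation error of the correct order. Once this midpoint coupling is in hand, propagating the $(\gamma^2-\eta^2)$ discrepancy through the recursion and integrating over the multiset at the end are technical but routine.
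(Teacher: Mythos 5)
Your reduction of the bridge coupling to a Brownian-motion coupling is where the argument breaks down, and the break is structural, not merely technical. You first condition on the unordered multiset $\{\e_1,\dots,\e_n\}$---as does the paper---so that $(S_k)$ becomes the partial-sum process of a uniform random permutation of a fixed $\A$-valued sample. But in this conditional world $S_n$ is a \emph{constant}: the process is forced to hit a deterministic endpoint. You then propose to couple this process to $\sqrt{n}\,\eta\, B_{\cdot/n}$, an unconditional Brownian motion whose terminal value $\sqrt{n}\,\eta\, B_1$ has standard deviation $\sqrt{n}\,\eta$. No coupling can make $\max_k |S_k - \sqrt{n}\,\eta\, B_{k/n}|$ small: already at $k=n$ the discrepancy $|S_n - \sqrt{n}\,\eta\, B_1|$ is of order $\sqrt{n}$, and $\E \exp\bigl(\lambda^2 (S_n - \sqrt n \eta B_1)^2/2\bigr)$ is finite only for $\lambda = O(n^{-1/2})$, not for $\lambda$ bounded away from zero. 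So the endpoint discrepancy cannot be absorbed into a fixed exponential weight as you suggest, and the conditional Brownian-motion coupling you would need does not exist. The natural Gaussian counterpart of a walk conditioned on its endpoint is a Brownian bridge, which is why the paper works directly with bridges throughout: its Theorem~\ref{thm4.1'} constructs a conditional bridge coupling by bisection induction (pick the time-$[n/2]$ value, couple it to the Gaussian midpoint via Theorem~\ref{thm3.2'}, recurse on the two halves), and Theorem~\ref{exchangeable} then follows by conditioning on the multiset and integrating.

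A secondary gap concerns the mismatch term $n(\gamma^2-\eta^2)^2$. You describe this as an additive contribution that ``propagates across $\log_2 n$ dyadic levels and accumulates,'' and the final integration over the multiset as ``technical but routine.'' In fact the paper has to control the discrepancy between $\eta^2$ and the \emph{sub-sample} second moment $\gamma_1^2 = k^{-1}\sum_{i=1}^k \e_{\pi(i)}^2$ at each induction step; since sampling is without replacement, this requires a negative-association argument (via the Joag-Dev--Proschan theorem) combined with Hoeffding's lemma to obtain a sub-Gaussian bound on $\sum_{i=1}^k(\e_{\pi(i)}^2-\eta^2)$. This is the genuinely new ingredient relative to the Rademacher case (where $\gamma_1^2\equiv 1$ and this entire issue vanishes), and it does not reduce to existing lemmas. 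Your observations about the second and third claims (choosing $\eta=\gamma$ when $0\notin\A$, the sub-Gaussianity of $S_n/\sqrt n$ for i.i.d. mean-zero summands plus Markov) are correct and match the paper, but they rest on the first claim, which your outlined proof does not establish.
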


\noindent The constant $C$ is given explicitly in \eqref{eq21} in the proof of Theorem \ref{thm4.1'}; its numerical value is roughly 8.4. The constants in the second inequality of Theorem \ref{exchangeable} are those that appear in the first inequality, specialized to a case where the lower bound $\nu$ depends only on ${\cal A}$. 

Our extension of the Rademacher variable result of \cite{chatterjee2012new} requires a number of non-trivial components. 
	Example 3 of \cite{chatterjee2012new} demonstrates how to smooth Rademacher variables to obtain Stein coefficients, and the author states `we do not know yet how to use Theorem 1.2 to prove
	the KMT theorem in its full generality, because we do not know how to
	generalize the smoothing technique of Example 3.' We address this point by the zero bias 
	method of Lemma \ref{lem:smoothing}, that shows how any mean zero, finite variance random variable may be smoothed to obtain a Stein coefficient.

Additionally, dealing with variables restricted to the set $\{-1,1\}$ avoids another difficulty. In particular, the second inequality of Theorem \ref{exchangeable} shows that the `natural scaling' for the approximating Brownian bridge process depends on the variance parameter $\gamma^2=n^{-1}\sum_{i=1}^n \e_i^2$, which in the case of Rademacher variables is always one. In fact, for such variables, the variance parameter remains the constant one when restricted and suitably scaled to any subset of variables. In contrast, in general when applying induction to piece together a larger path from smaller ones, their respective variance parameters may not match. This effect gives rise to the term $(\gamma^2-\eta^2)^2$ in the exponent of the first inequality of Theorem \ref{exchangeable}, which then needs to be controlled in order for the induction to be completed. In doing so, one gains results on the comparison of the sample paths of a more general classes of exchangeable variables to a Brownian bridge.

The second claim of Theorem \ref{exchangeable} is shown under the assumption $0 \not \in {\cal A}$. This condition becomes critical precisely at \eqref{eq:here.is.the.trouble}, where we require that the smallest absolute value of the elements of ${\cal A}$ is positive, from which one then obtains a lower bound $\nu$ on $\gamma$ when invoking Theorem \ref{thm4.1'}. This same phenomenon occurs in the proof of Lemma \ref{lem5.1'}, on the way to demonstrate Theorem \ref{thm3}.

The remainder of this work is organized as follows. In Section \ref{sec2}, we prove two theorems, one for coupling  sums $S_n$ of i.i.d. random variables, and one for coupling $W_n$ of Theorem \ref{exchangeable}, to Gaussians. We also prove Lemma \ref{lem:smoothing}, which shows how to construct Stein type coefficients using smoothing by zero bias variables. Theorems \ref{thm4.1'} and \ref{exchangeable}, the first result a conditional version of the second, are proved in Section \ref{sec3}, and we prove Lemma \ref{lem5.1'}, implying Theorem \ref{thm3}, in Section \ref{sec4}.

\section{Bounds for couplings to Gaussian variables}\label{sec2} 
In this section we prove Theorems \ref{thm3.1'} and \ref{thm3.2'}, generalizations of Theorems 3.1 and 3.2 of \cite{chatterjee2012new}, and our zero bias smoothing result, Lemma \ref{lem:smoothing}. The first theorem gives bounds on couplings of sums $S_n$ of i.i.d. variables, and the second on coupling of certain exchangeable sums to Gaussian random variables.

\begin{thm}
For every mean zero, variance one bounded random variable $\e$ satisfying $\E(\e^3) = 0$ and $\E(\e^4)<\infty$, there exists 
$\theta_1 >0$
such that for every positive integer $n$ it is possible to construct a version of the sum $S_n=\sum_{i=1}^n \e_i$ of $n$ independent copies of $\e$, and $Z_n\sim \mathcal{N}(0,n)$, on a joint probability space such that
\beas
\E \exp(\theta_1 |S_n - Z_n|) \le 8.
\enas
\label{thm3.1'}
\end{thm}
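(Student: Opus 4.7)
The plan is to follow the Stein-coefficient approach of \cite{chatterjee2012new}, using Lemma \ref{lem:smoothing} to supply the Stein coefficient that Chatterjee obtained by direct inspection in his Rademacher case. First I would apply Lemma \ref{lem:smoothing} to the single variable $\e$ to produce a smoothed perturbation $\tilde \e$ of $\e$, together with an explicit non-negative Stein coefficient $T_\e$ satisfying $\E[\tilde \e \, f(\tilde \e)] = \E[T_\e \, f'(\tilde \e)]$ for sufficiently smooth test functions $f$. The perturbation $\tilde \e - \e$ is controlled by a smoothing parameter $h$ that I can tune, and boundedness of $\e$ makes both $\tilde \e - \e$ and $T_\e$ bounded, with $\E T_\e = \mathrm{Var}(\tilde \e)$ close to $1$ for small $h$.

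Next I would tensorize. Taking $n$ independent copies of the joint construction produces $(\tilde \e_i, T_{\e,i})_{i=1}^n$, and by the standard rule for Stein coefficients of sums of independent terms, $T := \sum_{i=1}^n T_{\e,i}$ is a Stein coefficient for $\tilde S_n := \sum_{i=1}^n \tilde \e_i$. With this in hand I would invoke the general coupling result of Chatterjee (Theorem 1.2 of \cite{chatterjee2012new}), which constructs on a common probability space versions of $\tilde S_n$ and $Z_n \sim \mathcal{N}(0, n)$ satisfying an exponential moment bound of the form $\E \exp(\theta_0 |\tilde S_n - Z_n|) \le C_0$, with $\theta_0, C_0$ determined by the concentration of $T$ around its mean. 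Finally, $S_n - \tilde S_n = \sum_i (\e_i - \tilde \e_i)$ is a sum of $n$ i.i.d., mean-zero, bounded summands, so Hoeffding's lemma yields $\E \exp(\theta_2 |S_n - \tilde S_n|) \le C_2$ for all sufficiently small $\theta_2$. Combining the two estimates via Cauchy--Schwarz with $\theta_1 = \tfrac{1}{2}\min(\theta_0, \theta_2)$, and then optimizing $h$, delivers the claimed $\E \exp(\theta_1 |S_n - Z_n|) \le 8$.

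The principal difficulty is controlling $\E(T - n)^2$ uniformly in $n$. Since the $T_{\e,i}$ are i.i.d.\ and bounded, this reduces to a bound on $\mathrm{Var}(T_{\e,1})$ in terms of $h$ and the moments of $\e$. This is precisely where the hypothesis $\E\e^3 = 0$ enters: without it, the expansion of $\E T_{\e,1}^2$ in the smoothing parameter retains a term proportional to $\E\e^3$ that prevents $\mathrm{Var}(T_{\e,1})$ from being bounded purely in terms of $\E\e^4$; when $\E\e^3 = 0$ this offending term vanishes and one obtains $\mathrm{Var}(T_{\e,1}) \le C \E\e^4$, hence $\mathrm{Var}(T) \le C n \E\e^4$. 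This moment bound then feeds into Chatterjee's coupling theorem to produce the desired exponential coupling constant.
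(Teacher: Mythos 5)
Your plan has a genuine gap at the smoothing step, and two supporting misreadings. First, Lemma \ref{lem:smoothing} does not involve a tuneable parameter $h$: the perturbation $Y$ must have the $\e$-zero bias distribution, whose variance $\E Y^2 = \E \e^4/3$ is fixed by $\e$ and cannot be sent to zero. Second, Lemma \ref{lem:smoothing} is not a Stein-coefficient identity for $\tilde \e := \e + Y$; it gives $\E[\e f(\e+Y)] = \E[(\e^2 - \e Y) f'(\e+Y)]$ with $\e$ (not $\tilde\e$) on the left, and the missing piece $\E[Y f(\e + Y)] = \E[h_Y(Y) f'(\e+Y)]$ must be supplied separately, as the paper does in Lemma \ref{lem:tilde.smoothing}. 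But the fatal issue is the tensorization: if you smooth each $\e_i$ individually to get $\tilde\e_i = \e_i + Y_i$ with i.i.d.\ $Y_i$, then $S_n - \tilde S_n = -\sum_{i=1}^n Y_i$ has variance of order $n$, and Hoeffding only gives $\E \exp(\theta|\sum Y_i|) \le 2 e^{C n \theta^2}$, which is unbounded in $n$ for any fixed $\theta>0$. Likewise $T = \sum_i T_{\e,i}$ has $\E T = n(1+\E Y^2) \ne n$, so $(T-n)^2/n \asymp n (\E Y^2)^2$ blows up and Theorem \ref{thm1} gives nothing useful when the target variance is $n$.

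The paper's proof avoids all of this by smoothing only once, at the level of the whole sum: $\widetilde S_n = S_n + Y$ with a single zero-bias variable $Y$, so that $|S_n - \widetilde S_n| = |Y| \le B$ is bounded uniformly in $n$. The point of Lemma \ref{lem:tilde.smoothing} is that this one $Y$ serves as a simultaneous zero-bias partner for each $\e_i$: writing $\widetilde S_n = \e_i + (Y + S_n^{(i)})$ and using that Lemma \ref{lem:smoothing} is stable under independent additive shifts, one gets $\E[\e_i f(\widetilde S_n)] = \E[(\e_i^2 - \e_i Y) f'(\widetilde S_n)]$ for every $i$, and adding the $h_Y(Y)$ term yields the Stein coefficient $T = \sum_i \e_i^2 - S_n Y + h_Y(Y)$, whose deviation from $n$ is governed by $\overline S = \sum_i(\e_i^2 - 1)$ and $S_n$ and hence controlled via Lemma \ref{hoeff}. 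Also, the role you assign to $\E\e^3 = 0$ is not quite the paper's: there, it ensures $\E Y = 0$ (so $Y$ admits the nonnegative Stein coefficient $h_Y$ and $\widetilde S_n$ is centered), which together with boundedness of $\e$ makes $T$ bounded.
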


For convenience, we adopt the convention that a normal random variable with mean $\mu$ and zero variance is identically equal to $\mu.$
\begin{thm}
For $n \ge 1$, let $\epsilon_1, \epsilon_2, \dots \epsilon_n$ be arbitrary elements of a finite set $\A \subset \mathbb{R}$, not necessarily distinct. Let $\gamma^2 =n^{-1} \sum_{i=1}^n \e_i^2$,  let $\pi$ be a uniform random permutation of $\{1,2,\dots ,n \}$, and for each $1\le k \le n$ let
\bea \label{eq:defWk}
S_k=\sum_{i=1}^k \epsilon_{\pi(i)} \qmq{and} W_k=S_k - \frac{kS_n}{n}.
\ena
Then for all $\nu>0$ there exist positive constants $c_1,c_2$ and $\theta_2$ depending only on $\A$ and $\nu$ such that for any integer $n \ge 1$, an integer $k$ such that $|2k-n|\le 1$, and any $\eta \ge \nu$, it is possible to construct a version of $W_k$ and a Gaussian random variable $Z_k$ with mean 0 and variance $k(n-k)/n$ on the same probability space such that for all $\theta \le \theta_2$, 
\beas
\E \exp(\theta |W_k - \eta Z_k|) \leq \exp\left(3+\frac{c_1\theta^2 S_n^2}{n}+ c_2\theta^2n(\gamma^2-\eta^2)^2\right).
\enas
\label{thm3.2'}
\end{thm}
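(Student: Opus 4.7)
The plan is to adapt Chatterjee's proof of his Theorem 3.2 in \cite{chatterjee2012new} from the symmetric $\{-1,+1\}$ case to general bounded sampling-without-replacement sums. Two ingredients change: the Rademacher smoothing of Example 3 of \cite{chatterjee2012new} is replaced by the zero-bias smoothing of Lemma \ref{lem:smoothing}, and we must carry through two extra error terms, proportional to $S_n^2/n$ and $n(\gamma^2-\eta^2)^2$, which vanish in the $\pm1$ case but are the content of the bound here.

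Recenter by setting $\bar\e=S_n/n$ and $X_i=\e_i-\bar\e$, so $\sum_i X_i=0$ and $W_k=\sum_{i=1}^k X_{\pi(i)}$. Construct an exchangeable pair $(W_k,W_k')$ by swapping a uniformly chosen element of $\{\pi(1),\ldots,\pi(k)\}$ with a uniformly chosen element of $\{\pi(k+1),\ldots,\pi(n)\}$. Using $\sum_i X_i=0$, one verifies the linearity condition $\E[W_k'-W_k\mid\pi]=-\lambda W_k$ with $\lambda=n/(k(n-k))$, so that the conditional Stein coefficient
\beas
T_\pi = \frac{1}{2\lambda}\E[(W_k'-W_k)^2\mid \pi] = \frac{1}{2n}\sum_{i\in A,\,j\in B}(X_i-X_j)^2
\enas
(with $A,B$ the two halves of $\pi$) satisfies $\E T_\pi = \tfrac{k(n-k)}{n}(\gamma^2 - \bar\e^2) + O(1)$, using $|2k-n|\le 1$ and $\sum_i X_i^2 = n(\gamma^2 - \bar\e^2)$. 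Since $T_\pi$ is a bounded, symmetric bilinear statistic of a uniform random permutation and $|\e_i|\le\max_{a\in\A}|a|$, Hoeffding-type inequalities for sampling without replacement give exponential moment control of $T_\pi - \E T_\pi$ of order $O(1)$.

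Because $W_k$ is lattice-valued, $T_\pi$ is only an approximate Stein coefficient. Apply Lemma \ref{lem:smoothing} to pass to a smoothed variable $\widetilde W_k = W_k + U$, with $U$ of bounded support depending only on $\A$, carrying an honest Stein coefficient $\widetilde T = T_\pi + T_U$ in which $T_U$ is independent of $\pi$. The MGF coupling machinery developed by Chatterjee (Theorem 1.2 of \cite{chatterjee2012new}, and the reformulation used to prove Theorem \ref{thm3.1'} above) then produces a Gaussian $Z$ with $\mathrm{Var}(Z)=\E\widetilde T$ coupled to $\widetilde W_k$ such that, for $\theta$ sufficiently small, $\E\exp(\theta|\widetilde W_k - Z|)$ is bounded by an exponential depending on $\E\widetilde T$ and on exponential moments of $\widetilde T - \E\widetilde T$; these latter are $O(1)$ uniformly by the Hoeffding bound above. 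Removing $U$ costs only a bounded multiplicative factor, absorbed into the $\exp(3)$ prefactor.

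To finish, convert $Z$ to $\eta Z_k$ with $Z_k\sim\mathcal N(0,k(n-k)/n)$ via an independent Gaussian shift that absorbs the variance mismatch $\eta^2 k(n-k)/n - \E\widetilde T = \tfrac{k(n-k)}{n}(\eta^2-\gamma^2+\bar\e^2) + O(1)$. The corresponding standard deviation is of order $\sqrt n\,|\gamma^2-\eta^2| + |S_n|/\sqrt n$, so after squaring and multiplying by $\theta^2$ it contributes exactly the $c_1\theta^2 S_n^2/n$ and $c_2\theta^2 n(\gamma^2-\eta^2)^2$ summands in the exponent. The main obstacle is making the constants $c_1,c_2,\theta_2$ depend only on $\A$ and $\nu$: the lower bound $\eta\ge\nu$ enters precisely at this variance-adjustment step, where it is needed to keep the target variance $\eta^2 k(n-k)/n$ of order $n$ so that the Gaussian correction behaves subgaussianly on the correct scale and the admissible range $\theta\le\theta_2$ can be chosen uniformly.
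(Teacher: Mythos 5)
Your skeleton---a pairwise-difference Stein coefficient for the sampling-without-replacement sum, a smoothing step to make the Stein identity exact, Chatterjee's coupling theorem, and concentration for permutation statistics---is the same strategy as the paper, and your $T_\pi=\frac{1}{2n}\sum_{i\in A,\,j\in B}(X_i-X_j)^2$ is indeed the leading part of the Stein coefficient used there. The first genuine gap is the assertion that Lemma \ref{lem:smoothing} lets you pass to $\widetilde W_k=W_k+U$ carrying an honest Stein coefficient of the form $\widetilde T=T_\pi+T_U$ with $T_U$ independent of $\pi$. Lemma \ref{lem:smoothing} applies to a mean zero variable smoothed by an independent copy of \emph{its own} zero-bias law, and $W_k$ is not a sum of independent terms; the paper instead applies the lemma \emph{conditionally} on ${\cal F}_{ij}$ to each symmetric two-point difference $X_{ij}=(\e_{\pi(i)}-\e_{\pi(j)})/2$, using one uniform smoother $Y_d$ for each difference value $d\in{\cal D}^+$. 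The exact coefficient that results is $T=R_1-R_2-R_3+R_4$, and the cross term $R_3=\sum_{d\in{\cal D}^+}Y_dW_{k,d}$ depends jointly on the smoothing variables and on $\pi$, so it is not of your additive independent form; controlling its exponential moments is precisely the content of Lemma \ref{lem3.4'} (an exchangeable-pair differential inequality for the $W_{k,d}$ of \eqref{eq:defWkd}), and nothing in your sketch supplies that ingredient.

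The second gap is the variance-matching step. You couple $\widetilde W_k$ to $Z\sim{\cal N}(0,\E\widetilde T)$ and then pass to $\eta Z_k$ by an independent Gaussian shift whose variance equals the mismatch $\tfrac{k(n-k)}{n}(\eta^2-\gamma^2+\bar\e^2)+O(1)$. Such a shift has standard deviation of order $\sqrt{n|\gamma^2-\eta^2|}+|S_n|/\sqrt n$, not $\sqrt n\,|\gamma^2-\eta^2|+|S_n|/\sqrt n$ as you claim, so its exponential moment contributes a term of order $\theta^2 n|\gamma^2-\eta^2|$ to the exponent---linear rather than quadratic in $\gamma^2-\eta^2$---which does not yield the stated bound; moreover when the mismatch is negative no independent shift exists at all. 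The paper avoids this step entirely: Theorem \ref{thm1} allows coupling to a Gaussian of \emph{any} prescribed variance, so it takes $\sigma^2=\eta^2k(n-k)/n$ directly, the mismatch then enters only through $(T-\sigma^2)^2/\sigma^2$ and hence automatically quadratically, and the hypothesis $\eta\ge\nu$ is used exactly to lower bound the denominator $\sigma^2$. Your step could be repaired by a multiplicative coupling $\eta Z_k=cZ$, since then $|Z-\eta Z_k|=|1-c|\,|Z|$ and $(1-c)^2\,\E\widetilde T\le({\rm mismatch})^2/(\eta^2k(n-k)/n)$, which is of the right order because $S_n^4/n^3\le B^2S_n^2/n$; but as written the argument does not prove the inequality, and you would additionally need to handle the degenerate case where $\E\widetilde T$ is small or zero, a difficulty the paper's route never meets since it divides only by $\eta^2$, never by a quantity depending on $\gamma$.
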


We now define Stein coefficients, the key ingredient upon which our approach depends. Let $W$ be a random variable with $\E[W]= 0$ and finite second moment. We say the random variable $T$ defined on the same probability space is a Stein coefficient for $W$ if
\begin{equation}
\label{eq1}
\E[W f (W )] = \E[T f' ( W )]
\end{equation}
for all Lipschitz functions $f$ and $f'$ any a.e. derivative of $f$, whenever these expectations exist.

\begin{thm}[Chatterjee \cite{chatterjee2012new}]
	Let $W$ be mean zero with finite second moment and suppose that $T$ is a Stein coefficient for $W$ with $|T|$ almost surely bounded by a constant. Then, given any $\sigma^2 > 0$, we can construct a version of $W$ and $Z \sim \mathcal{N}(0,\sigma^2)$ on the same probability space such that 
	\beas
	\E \exp(\theta |W-Z|) \leq 2 \E \exp\left(\frac{2\theta^2 (T - \sigma^2)^2}{\sigma^2}\right)  \qmq{for
		all $\theta \in \mathbb{R}$.}
	\enas 
	\label{thm1}
\end{thm}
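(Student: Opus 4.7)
The plan is to couple $W$ and $Z$ via an Ornstein--Uhlenbeck (OU) interpolation and use the Stein identity satisfied by $T$ to quantify the resulting gap. On an enlarged probability space, let $Z\sim \mathcal{N}(0,\sigma^2)$ be independent of $(W,T)$ and define the Mehler-type interpolation
\beas
X_s \;=\; e^{-s} W + \sqrt{1-e^{-2s}}\, Z, \qquad s \ge 0,
\enas
so that $X_0 = W$ and $X_\infty = Z$ live on the same probability space. The conditional law of $X_s$ given $W$ coincides with the OU semigroup at time $s$ applied to $\delta_W$, whose invariant measure is $\mathcal{N}(0,\sigma^2)$.

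Next I would study the interpolating exponential moment $\varphi(s):=\E\exp(\theta(W-X_s))$, which satisfies $\varphi(0)=1$ and $\varphi(\infty)=\E\exp(\theta(W-Z))$. Differentiating in $s$ produces terms of the form $\E[W\, g(W,X_s)]$, which the defining identity $\E[Wf(W)]=\E[Tf'(W)]$ rewrites as a second-derivative expression weighted by $T$, while the contribution from the Gaussian part $Z$ produces an analogous second-derivative expression weighted by $\sigma^2$ via Gaussian integration by parts. These two terms cancel exactly when $T=\sigma^2$, leaving a residue proportional to $(T-\sigma^2)$. Completing the square against the Gaussian OU kernel (equivalently, a Cauchy--Schwarz step inside the exponential) then yields a differential inequality of the form
\beas
\varphi'(s) \;\le\; \frac{\theta^2}{\sigma^2}\,\E\!\left[(T-\sigma^2)^2 \exp(\theta(W-X_s))\right],
\enas
which can be integrated in $s$ over $[0,\infty)$ by a Gronwall-type argument to deliver $\E\exp(\theta(W-Z)) \le \E\exp\!\big(C\theta^2(T-\sigma^2)^2/\sigma^2\big)$ for an explicit constant $C$.

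Finally, the absolute value is handled by the symmetrization $\E\exp(\theta|W-Z|)\le \E\exp(\theta(W-Z))+\E\exp(\theta(Z-W))$, where the second term admits the same bound by running the coupling with $-W$ in place of $W$ (the variable $-W$ has Stein coefficient $T$ as well). The resulting factor of $2$ matches the prefactor in the claimed estimate. The main obstacle will be upgrading the standard Stein-method output---typically an $L^1$- or Wasserstein-type bound---to a genuine exponential moment estimate with the sharp constant $2\theta^2/\sigma^2$; this requires the Gaussian smoothing of the OU semigroup to interact cleanly with the Stein-kernel defect $(T-\sigma^2)$ inside an exponential, rather than merely inside expectations of smooth test functions. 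The almost-sure boundedness $|T|\le c$ is precisely what guarantees the exponential moments driving the Gronwall step are finite for every $\theta\in\mathbb{R}$ and thereby closes the argument.
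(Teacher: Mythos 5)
The paper does not include a proof of Theorem~\ref{thm1}; it is quoted from Chatterjee \cite{chatterjee2012new} as a ready-made tool. So the only thing to assess is whether your blind argument could actually substitute for the cited proof, and it cannot: the interpolation you set up is not a coupling at all, and the cancellation your differential inequality relies on does not occur.

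You take $Z\sim\mathcal N(0,\sigma^2)$ \emph{independent} of $(W,T)$ and define $X_s=e^{-s}W+\sqrt{1-e^{-2s}}\,Z$, so that the pair you end up with, $(W,X_\infty)=(W,Z)$, is the independent coupling. For the independent coupling one has, for any $\theta>0$,
\begin{equation*}
\E\exp(\theta|W-Z|)\ \ge\ \E\exp(\theta(W-Z))\ =\ \E\exp(\theta W)\,\E\exp(-\theta Z)\ \ge\ \exp(\theta^2\sigma^2/2),
\end{equation*}
using independence, Jensen's inequality and $\E W=0$. This tends to infinity as $\theta\to\infty$, whereas if $T=\sigma^2$ almost surely the right-hand side of the theorem is the constant $2$. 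So no estimate of the type you want is available for the coupling you constructed; one needs a construction that actively pushes $Z$ close to $W$ (for instance a quantile or conditional-quantile transform), which the Mehler interpolation with an independent terminal Gaussian does not do.

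This shows up concretely when you differentiate $\varphi(s)=\E\exp\big(\theta(W-X_s)\big)$. Writing $W-X_s=(1-e^{-s})W-\sqrt{1-e^{-2s}}\,Z$ and applying the Stein identity for $W$ (conditional on $Z$) to the $W$-term and Gaussian integration by parts to the $Z$-term, one gets
\begin{equation*}
\varphi'(s)=\theta^2\sigma^2 e^{-s}\,\varphi(s)\ +\ \theta^2 e^{-s}(1-e^{-s})\,\E\!\left[(T-\sigma^2)\exp\big(\theta(W-X_s)\big)\right].
\end{equation*}
The first term is \emph{not} a residue proportional to $T-\sigma^2$: it is present even when $T\equiv\sigma^2$, and integrating it over $[0,\infty)$ already produces the factor $\exp(\theta^2\sigma^2)$ that is the signature of the independent coupling. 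The sentence ``these two terms cancel exactly when $T=\sigma^2$'' is therefore incorrect, and the Gronwall step that follows cannot deliver a bound of the form $\E\exp\big(c\,\theta^2(T-\sigma^2)^2/\sigma^2\big)$. The cited result in \cite{chatterjee2012new} rests on an explicit coupling construction designed to align $W$ and $Z$ pathwise (together with a Cauchy--Schwarz-style upgrade to the exponential scale), not on smoothing $W$ to an \emph{independent} Gaussian via the Ornstein--Uhlenbeck semigroup. If you want to salvage the interpolation idea, you would at minimum have to replace the independent $Z$ by one that is a deterministic monotone transform of $W$ and rework the differentiation around that dependence; as written, the proposal fails at the construction step.
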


To prove Theorems \ref{thm3.1'} and \ref{thm3.2'}, we require the following definitions. Following Section 3.2 of \cite{goldstein2007l1}, see also Proposition 4.2 of \cite{chen2010normal},
for $X$ a random variable with finite, non-zero second moment, we say $X^\Box$ has the $X$-square bias distribution when
\bea \label{def:Xbox}
\E[f(X^\Box)]= \frac{1}{\E X^2} \E[X^2 f(X)]
\ena
for all functions $f$ for which the expectation on the right hand side exists. For a mean zero random variable $X$ with finite, non-zero variance $\sigma ^2$, we say that $X^*$ has the $X$-zero bias distribution when
\begin{equation}
\label{eq2}
\sigma^2 \E[f'(X^*)]= \E[Xf(X)]
\end{equation}
for all Lipschitz functions $f$ and any a.e. derivative $f'$, whenever these expectations exist.  That $X^*$ exists for such random variables, see \cite{goldstein1997stein} and \cite{chen2010normal}.

If $X$ is a mean zero random variable with finite, non-zero variance $\sigma^2$, then for any $g \in C_c$, the collection of continuous functions with compact support, letting $f(x)=\int_0^x g(u)du$, using \eqref{def:Xbox}, we have
\begin{align*}
\sigma^2\E g(UX^\Box)&=\sigma^2 \E f'(UX^\Box)\\
&=\sigma^2 \E \int_0^1 f'(uX^\Box)du\\
&=\sigma^2 \E\left[\frac{f(X^\Box)}{X^\Box}\right]\\
&=\E\left[X^2 \frac{f(X)}{X}\right]\\
&=\E[Xf(X)]
\end{align*}
where $X^\Box$ and $U$ are independent, $U\sim \U[0,1]$ and $X^\Box$ has the $X$-square bias distribution. Thus, using \eqref{eq2}, we have
\beas
\sigma^2\E g(UX^\Box)=\E[Xf(X)]=\sigma^2\E [f'(X^*)]=\sigma^2\E [g(X^*)].
\enas
Since the expectation of $g(X^*)$ and $g(UX^\Box)$ agree for any $g \in C_c$, with $=_d$ denoting distributional equivalence, we obtain
\beas
X^*=_d UX^\Box.
\enas

Smoothing $X$ by adding an independent random variable $Y$ having the $X$-zero bias distribution, we obtain the following result which will be used for constructing Stein coefficients for sums.\\
\newtheorem{lem}[thm]{Lemma}
\begin{lem} \label{lem:smoothing}
If $X$ is a mean zero random variable with finite non-zero variance, and $Y$ is an independent variable with the $X$-zero bias distribution, then
	\beas
		\E[X f(X + Y)]= \E[(X^2 - XY) f'(X + Y)]
	\enas
	for all Lipschitz functions $f$ and a.e. derivative $f'$ for which these expectations exist.
	\label{lem3.3'}
\end{lem}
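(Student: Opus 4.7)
The plan is to reformulate the identity via the observation $\partial_y[yf(x+y)] = f(x+y)+yf'(x+y)$, so that it becomes equivalent to
\beas
\E[X(f(X+Y)+Yf'(X+Y))] = \E[X^2 f'(X+Y)],
\enas
and then to show both sides equal $\sigma^2\E[f'(X^\Box + Y)]$, where $X^\Box$ has the $X$-square-bias distribution of \eqref{def:Xbox} and is taken independent of $X$ and $Y$.

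The right-hand side is immediate: conditioning on $Y$ and applying $\E[X^2h(X)] = \sigma^2\E[h(X^\Box)]$ with $h(x) = f'(x+Y)$ yields $\E[X^2f'(X+Y)] = \sigma^2\E[f'(X^\Box+Y)]$. For the left-hand side, the approach is to condition on $X$ and invoke the zero-bias identity \eqref{eq2} for $Y$ with test function $g(y) = yf(X+y)$, whose derivative is $f(X+y)+yf'(X+y)$. This produces $\E[f(X+Y)+Yf'(X+Y)\mid X] = \sigma^{-2}\E[X'^2f(X+X')\mid X]$, where $X'$ denotes an independent copy of $X$. Multiplying by $X$ and taking expectations reduces the task to evaluating $\sigma^{-2}\E[XX'^2f(X+X')]$. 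Applying zero-bias to $X$ conditional on $X'$, with test function $x\mapsto X'^2f(x+X')$, gives $\sigma^2\E[X'^2f'(X^*+X')]$ where $X^* \perp X'$ is a zero-bias copy; subsequently applying \eqref{def:Xbox} to $X'$ conditional on $X^*$ with $h(x') = f'(X^*+x')$ yields $\sigma^4\E[f'(X^*+X'^\Box)]$. Using that $X^* \stackrel{d}{=} Y$ and $X'^\Box\stackrel{d}{=} X^\Box$, with the stated independence, this equals $\sigma^4\E[f'(Y+X^\Box)]$, and division by $\sigma^2$ matches the right-hand side.

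The main technical subtlety is admissibility of $g(y) = yf(X+y)$ in \eqref{eq2}: with $f$ only Lipschitz, $g$ is not globally Lipschitz (its derivative grows linearly in $|y|$), so strictly a truncation or approximation argument is required. Since $X$ has finite variance, $X^\Box$ has finite first absolute moment and $X^*$ inherits the needed integrability, so all expectations in the chain are finite and the passage through the truncation is routine; in the paper's applications $\e$ takes values in a finite set, so all random variables have bounded support and admissibility is automatic. Beyond this, the proof is essentially bookkeeping: tracking the auxiliary copies $X'$, $X^*$, $X^\Box$ and their mutual independence through each application of \eqref{eq2} and \eqref{def:Xbox}.
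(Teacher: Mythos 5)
Your argument is correct, but it is a genuinely different proof from the paper's. The paper writes $Y=_d UV^\Box$ with $U\sim \U[0,1]$, $V=_d X$ and $V^\Box$ square-biased, expands $\E[(X^2-XY)f'(X+Y)]$ by an explicit integration by parts in $u$ over $[0,1]$, applies the square-bias change of measure \eqref{eq5}, and reduces the claim to the cancellation $\E[VX(X-V)f(X+V)]=0$, which follows from the antisymmetry of the i.i.d.\ pair $(X,V)$. You never use the representation $X^*=_d UX^\Box$: after rearranging the identity as $\E[X(f(X+Y)+Yf'(X+Y))]=\E[X^2f'(X+Y)]$, you identify both sides with the single quantity $\sigma^2\E[f'(X^*+X^\Box)]$ for independent zero-bias and square-bias copies --- the right side by \eqref{def:Xbox} conditional on $Y$, the left side by \eqref{eq2} conditional on $X$ with $g(y)=yf(X+y)$, then \eqref{eq2} again conditional on $X'$, then \eqref{def:Xbox} conditional on $X^*$; the bookkeeping of independence and the matching of factors of $\sigma^2$ check out, and since $Y$, $X^*$ and $X^\Box+Y$ all have densities, the ambiguity in the a.e.\ derivative $f'$ is harmless. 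What your route buys is a cleaner structural identity (both sides equal $\sigma^2\E f'(X^*+X^\Box)$) and the avoidance of the boundary-term manipulation with $V^\Box$ in denominators and of the exchangeability step; what the paper's route buys is that it only ever uses the square-bias change of measure plus an elementary calculus step, so it never invokes \eqref{eq2} outside the Lipschitz class, whereas your application with $g(y)=yf(X+y)$ does need the truncation/Fubini extension you flag. That caveat is real but minor: the extension of \eqref{eq2} to locally absolutely continuous $g$ follows by Fubini once $\E|Xg(X)|$ and $\E|g'(X^*)|$ are finite, and your intermediate quantities such as $\E[XX'^2f(X+X')]$ are finite under a third-moment condition, automatic for the bounded summands to which the lemma is applied; the paper's own proof makes comparable implicit integrability assumptions (e.g.\ in the term $\E[VX^2f(X)]$ and its Fubini steps), so you are at the same level of rigor.
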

\begin{proof}
	Let $V$ be distributed as $X$, let $U$ be a $\U[0,1]$ random variable, and set
	\beas
		Y=UV^\Box
	\enas
	where $V, U, V^\Box$ and $X$ are independent. Note that for any bivariate function $g$ for which the expectations below exist, by \eqref{def:Xbox} we have
	\begin{equation}
	\label{eq5}
	\E[g(X, V^\Box)]=\frac{1}{\sigma^2}\E[V^2 g(X, V)],
	\end{equation}
	where $\sigma^2$ is the variance of $X$. Hence
	\begin{align*}
	 \E&[(X^2-XY)f'(X+Y)]\\
	=& \E[(X^2-XUV^\Box)f'(X+UV^\Box)]\\
	=& \E\left[\int_{0}^{1}(X^2-XuV^\Box)f'(X+uV^\Box) du\right]\\
	=& \E\left[\frac{(X^2-XuV^\Box)f(X+uV^\Box)}{V^\Box}\Bvert_{0}^{1} + XV^\Box\int_{0}^{1} \frac{f(X+uV^\Box)}{V^\Box} du\right]\\
	=& \E\left[\frac{(X^2-XV^\Box)f(X+V^\Box)-X^2f(X)}{V^\Box}\right] +\E[Xf(X+Y)]\\
	=& \frac{1}{\sigma^2}\E[V(X^2-XV)f(X+V)-VX^2f(X)]+\E[Xf(X+Y)]\\
	=& \frac{1}{\sigma^2}\E[VX(X-V)f(X+V)]+\E[Xf(X+Y)],
	\end{align*}
	where we have used $\eqref{eq5}$ in the second to last equality, as well as the independence of $V$ and $X$, and that $\E V = 0$, in the last. Hence, to prove the claim, it suffices to show that the first term above is zero. Since $X =_d V$ and $V$ and $X$ are independent and exchangeable, we have
	\begin{equation*}
	VX(X-V)f(X+V)=_d VX(V-X)f(X+V)=-VX(X-V)f(X+V),
	\end{equation*}
	demonstrating that the expectation of the expression above is zero.
\end{proof}

For any mean zero $X$ with finite, non-zero variance $\sigma^2$ the distribution of $X^*$ is absolutely continuous with density function
\begin{equation}
\label{eq3}
p_{X^*}(x) = \frac{\E[X \mathds{1}(X > x)] }{\sigma^2}.
\end{equation}
One finds directly from \eqref{eq3} that
\bea \label{Xbd.implies.X*bd}
\mbox{$a \le X \le b$ for some constants $a<b$ implies $a \le X^* \le b$.}
\ena

Comparing \eqref{eq1} with \eqref{eq2}, we see that $T$ is a Stein coefficient for $X$ if $\sigma^{-2}E[T|X]$ is the Radon Nikodym derivative $\frac{d\mu^*}{d\mu}$ of the probability measure $\mu^*$ of $X^*$ with respect to the measure $\mu$ of $X$. Hence, in light of \eqref{eq3}, if
$X$ is a random variable with mean zero and finite variance, having density function $p_{X}(x)$ whose support is an interval, then setting
\begin{equation}
\label{eq4}
h_{X}(x)=\frac{\E[X\mathds{1}(X>x)]}{p_{X}(x)}\mathds{1}(p_{X}(x)>0) \qquad \text{we have} \qquad \E[X f(X)]=\E[h_{X}(X) f'(X)]
\end{equation}
for all Lipschitz function $f$ and a.e. derivative $f'$ for which these expectations exist, that is, $h_{X}(X)$ is a Stein coefficient for $X$. We note the first equality in \eqref{eq4} shows, by virtue of $\E(X)=0$, that $h_{X}(x) \ge 0$.

Now consider a random variable $X$ having vanishing first and third moment, variance strictly between zero and infinity and satisfying $\E(X^4)<\infty$. Then the distribution for a random variable $Y$ having the $X$-zero bias distribution exists, and from \eqref{eq2} with $g(x)=x^2$ and $g(x)=x^3$, we respectively find
\bea \label{eq:Ymean.zero.finite.var}
\E(Y)=0 \qmq{and} \E(Y^2)<\infty.
\ena
Moreover from \eqref{eq3} we see that $Y$ has density function $p_Y(y)$ whose support is a closed interval. Hence the function $h_Y(y)$, given by the first equality in $\eqref{eq4}$, satisfies the second.

\begin{lem} \label{lem:tilde.smoothing}
Let $\e_1, \dots \e_n$ be independent and identically distributed as $\e$, a random variable with mean zero, finite nonzero variance, and satisfying $\E(\e^3) = 0 $ and $\E(\e^4)<\infty$, and let $Y$ have the $\e$-zero bias distribution and be independent of $ \e_1, \dots, \e_n.$ Then for all Lipschitz functions $f$ and a.e. derivative $f'$,
\beas 
\E[\widetilde S_n f(\widetilde S_n)]=\E[T f'(\widetilde S_n)]
\enas
where
\beas
\widetilde S_n= S_n + Y \qmq{with} S_n=\e_1 + \e_2 + \dots + \e_n,
\enas
and
\beas
T=\sum\limits_{i=1}^{n}\e_i^2 - S_n Y+h_Y(Y)  \qmq{with}  h_Y(y)=\frac{\E[Y\mathds{1}(Y>y)]}{p_Y(y)}\mathds{1}(p_Y(y)>0).
\enas
\end{lem}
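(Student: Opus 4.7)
The plan is to split $\widetilde{S}_n f(\widetilde{S}_n)$ into its two natural summands and handle each by a Stein-type identity already available from earlier in the paper.

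First I would write
\begin{equation*}
\E[\widetilde{S}_n f(\widetilde{S}_n)] = \sum_{i=1}^n \E[\e_i f(\widetilde{S}_n)] + \E[Y f(\widetilde{S}_n)].
\end{equation*}
For a fixed $i$, condition on the vector $\mathcal{F}_i = (\e_j)_{j \ne i}$. Setting $g(x) = f\bigl(x + \sum_{j \ne i} \e_j\bigr)$, this $g$ is Lipschitz with the same Lipschitz constant as $f$. Since the $\e_j$ are i.i.d.\ as $\e$, the variable $Y$ has the $\e_i$-zero bias distribution and is independent of $\e_i$ (and of $\mathcal{F}_i$). Applying Lemma \ref{lem:smoothing} to the pair $(X, Y) = (\e_i, Y)$ conditionally on $\mathcal{F}_i$ yields
\begin{equation*}
\E[\e_i f(\widetilde{S}_n) \mid \mathcal{F}_i] = \E[\e_i g(\e_i + Y) \mid \mathcal{F}_i] = \E[(\e_i^2 - \e_i Y) g'(\e_i + Y) \mid \mathcal{F}_i] = \E[(\e_i^2 - \e_i Y) f'(\widetilde{S}_n) \mid \mathcal{F}_i].
\end{equation*}
Taking unconditional expectation and summing over $i$ gives
\begin{equation*}
\E[S_n f(\widetilde{S}_n)] = \E\Bigl[\Bigl(\sum_{i=1}^n \e_i^2 - S_n Y\Bigr) f'(\widetilde{S}_n)\Bigr].
\end{equation*}

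Next I would handle $\E[Y f(\widetilde{S}_n)]$ using $h_Y(Y)$ as a Stein coefficient for $Y$. The hypotheses $\E\e^3 = 0$ and $\E\e^4 < \infty$ ensure, via \eqref{eq:Ymean.zero.finite.var}, that $Y$ has mean zero and finite variance, and the paragraph following \eqref{eq:Ymean.zero.finite.var} shows that the density $p_Y$ is supported on a closed interval, so the identity in \eqref{eq4} applies to $Y$. Conditioning on $S_n$ and using the independence of $Y$ from the $\e_j$'s, together with the fact that $y \mapsto f(y + S_n)$ is Lipschitz for fixed $S_n$, gives
\begin{equation*}
\E[Y f(\widetilde{S}_n) \mid S_n] = \E[h_Y(Y) f'(\widetilde{S}_n) \mid S_n],
\end{equation*}
which on taking expectation yields $\E[Y f(\widetilde{S}_n)] = \E[h_Y(Y) f'(\widetilde{S}_n)]$. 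Adding this to the expression derived above produces exactly $\E[T f'(\widetilde{S}_n)]$ for the $T$ stated in the lemma.

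The routine but slightly delicate step is to justify the conditional applications of Lemma \ref{lem:smoothing} and of \eqref{eq4}, and the interchange of integration implicit in conditioning: one needs both sides of each identity to be absolutely integrable. This follows from $f$ being Lipschitz (hence of at most linear growth), together with $\e$ being bounded (or at least in $L^4$ as assumed) and $Y \in L^2$ by \eqref{eq:Ymean.zero.finite.var}, so that quantities such as $\e_i^2 f'(\widetilde{S}_n)$, $\e_i Y f'(\widetilde{S}_n)$ and $h_Y(Y) f'(\widetilde{S}_n)$ are all integrable whenever the expectations in the statement are finite. I expect this integrability bookkeeping, rather than any conceptual issue, to be the only real obstacle.
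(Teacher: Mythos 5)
Your proposal is correct and follows essentially the same route as the paper: the same decomposition of $\E[\widetilde S_n f(\widetilde S_n)]$ into the $S_n$-part and the $Y$-part, with Lemma \ref{lem:smoothing} applied termwise to each $\e_i$ (the paper phrases the conditioning simply as independence of the summands) and the identity \eqref{eq4} applied to $Y$ via $\E Y=0$ and the interval support of $p_Y$. The integrability caveats you raise are handled in the paper by the standing qualifier ``whenever these expectations exist,'' so no further work is needed.
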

\begin{proof}
With $S_n^{(i)}=S_n - \e_i$, we have
\bea 
\label{eq7}
\E[\widetilde S_n f(\widetilde S_n)]=\E[S_n f(\widetilde S_n) +Y f(\widetilde S_n)]
=\sum\limits_{i=1}^{n}\E[\e_i f(\e_i + Y+ S_n^{(i)})] + \E[Yf(Y + S_n)].
\ena

For the first term of \eqref{eq7}, using that the summands $\e_i$ are independent and applying Lemma $\ref{lem3.3'}$
yields
\beas
\E[\e_i f(\e_i + Y + S_n^{(i)})]=\E[(\e_i^2 - \e_i Y)f'(\e_i +Y + S_n^{(i)})]=\E[(\e_i^2 - \e_i Y)f'(\widetilde S_n)].
\enas
Now turning to the second term of \eqref{eq7}, we first note that by \eqref{eq2} the assumption that the third moment of $\e$ is zero implies $E(Y)=0$. Now using the independence of $Y$ and $S_n$, \eqref{eq4} yields
\beas
\E[Y f(Y + S_n)] = \E[h_Y(Y)f'(Y + S_n)] = \E[h_Y(Y)f'(\widetilde S_n)].
\enas
Substitution into \eqref{eq7} now yields the claim.
\end{proof}

Hoeffding's lemma, e.g. see the proof of Lemma 2.2 of \cite{boucheron2013concentration}, will be used below. It states that if $X$ is a mean zero random variable that satisfies $a \le X \le b$ almost surely, then
	\bea \label{Hoeffding.lemma}
	\E[\exp(\theta X)] \le e^{(b-a)^2 \theta^2/8} \qmq{for all $\theta \in \mathbb{R}$.}
	\ena 
We also require the `non central $\chi_1^2$' moment generating function identity,
\bea \label{chi}
{\E \exp\left( \alpha V^2 + \beta V \right) = \frac{\exp\left(\frac{\beta^2}{2(1-2\alpha)}\right)}{(1-2 \alpha )^{1/2}}}
\ena
valid for the standard Gaussian variable $V$, and all $\beta \in \mathbb{R}$ and $\alpha<1/2$. 

For the law ${\cal L}(X)$ of any random variable $X$ let 
\beas
\ell({\cal L}(X))=\inf\{b-a: P(a \le X \le b)=1\},
\enas
the length of the support of $X$. For notational simplicity we will write $\ell(X)$, or $\ell$ when $X$ is clear from context, for $\ell({\cal L}(X))$. We use that $\ell(X)$ is translation invariant in the sense that $\ell(X)=\ell(X-c)$ for any real number $c$ without further mention.

\begin{lem}
\label{hoeff}
For every almost surely bounded random variable $X$, there exists a constant $\vartheta_{\ell(X)} \in (0,\infty)$ depending only on $\ell(X)$ such that when $X_1, X_2,\dots$ are independent random variables distributed as $X$, the sum $S_n=X_1+\cdots+X_n$ and $\mu=\E X$ satisfy
\beas
\E\left[\exp\left(\theta^2\frac{S_n^2}{n}\right)\right] \le \frac{4}{3}\exp\left(\frac{4}{3}n \theta^2 \mu^2\right) \qmq{for all $n \ge 1$ and $|\theta| \le \vartheta_{\ell(X)}$.}
\enas
\end{lem}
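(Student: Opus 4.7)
The plan is to linearize the quadratic $\theta^2 S_n^2/n$ by a standard Gaussian integral trick, decouple the resulting product over independent summands, and then combine Hoeffding's lemma \eqref{Hoeffding.lemma} with the non-central $\chi_1^2$ identity \eqref{chi}. Specifically, let $V \sim \mathcal{N}(0,1)$ be independent of $(X_i)_{i \ge 1}$. The identity $\E \exp(c V) = \exp(c^2/2)$ applied conditionally on $S_n$ with $c = \theta S_n \sqrt{2/n}$ yields
\beas
\exp\!\left(\theta^2 \frac{S_n^2}{n}\right) = \E\!\left[\exp\!\left(\theta \sqrt{\tfrac{2}{n}}\, S_n V\right)\,\Big|\, S_n\right],
\enas
so by Fubini and independence of the $X_i$,
\beas
\E\!\left[\exp\!\left(\theta^2 \frac{S_n^2}{n}\right)\right] = \E\prod_{i=1}^n \E\!\left[\exp\!\left(\theta \sqrt{\tfrac{2}{n}}\, V X_i\right)\,\Big|\, V\right].
\enas

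Next, write $\ell = \ell(X)$ and pick $a \le X \le b$ almost surely with $b-a=\ell$. Since $X - \mu$ is mean zero with the same support length $\ell$, Hoeffding's lemma \eqref{Hoeffding.lemma} applied with $t = \theta\sqrt{2/n}\, V$ gives
\beas
\E\!\left[\exp\!\left(\theta \sqrt{\tfrac{2}{n}}\, V X_i\right)\,\Big|\, V\right] \le \exp\!\left(\theta\sqrt{\tfrac{2}{n}}\, V \mu + \frac{\theta^2 V^2 \ell^2}{4n}\right),
\enas
and taking the product over $i=1,\dots,n$ collapses the $1/n$ factors, producing the bound
\beas
\E\!\left[\exp\!\left(\theta^2 \frac{S_n^2}{n}\right)\right] \le \E \exp\!\left(\frac{\theta^2 \ell^2}{4} V^2 + \theta \sqrt{2n}\, \mu V\right).
\enas

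Now apply the identity \eqref{chi} with $\alpha = \theta^2 \ell^2/4$ and $\beta = \theta \sqrt{2n}\, \mu$, which is valid as long as $\theta^2 \ell^2 < 2$. This gives
\beas
\E\!\left[\exp\!\left(\theta^2 \frac{S_n^2}{n}\right)\right] \le \frac{1}{\sqrt{1 - \theta^2 \ell^2/2}}\,\exp\!\left(\frac{n \theta^2 \mu^2}{1 - \theta^2 \ell^2/2}\right).
\enas
It remains only to choose $\vartheta_\ell > 0$ small enough that both $1/\sqrt{1-\theta^2 \ell^2/2} \le 4/3$ and $1/(1-\theta^2 \ell^2/2) \le 4/3$ hold for $|\theta| \le \vartheta_\ell$; for instance, $\vartheta_\ell = 1/(\ell\sqrt{2})$ enforces $1 - \theta^2 \ell^2/2 \ge 3/4$, which is more than enough for both inequalities.

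There is no real obstacle here — the Gaussian linearization converts a quadratic concentration question into one that is handled directly by Hoeffding — but the small delicacy is tracking constants so that the prefactor and the $n\theta^2\mu^2$ coefficient both come out below $4/3$ simultaneously, which dictates the choice of $\vartheta_\ell$ as a function of $\ell(X)$ alone.
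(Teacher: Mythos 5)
Your proof is correct and follows essentially the same route as the paper's: Gaussian linearization of the quadratic exponent, decoupling via independence, Hoeffding's lemma conditional on $V$, and the non-central $\chi_1^2$ identity, arriving at the identical choice $\vartheta_\ell = 1/(\ell\sqrt{2})$. The only cosmetic difference is that the paper first replaces $1/\sqrt{1-\ell^2\theta^2/2}$ by $1/(1-\ell^2\theta^2/2)$ so that a single equation pins down $\vartheta_\ell$, whereas you verify both the prefactor and the exponent constraint separately.
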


The constant $4/3$ is somewhat arbitrary as any value greater than 1 can be achieved; the proof of Theorem \ref{thm4.1'} requires a value strictly less than $3/2$.
\begin{proof}
Let $V$ be a $\mathcal{N}(0,1)$ random variable independent of $X$. 
Using Hoeffding's lemma \eqref{Hoeffding.lemma} conditional on $V$, for any function of $V$ we have
\beas
\E[\exp(t(V)(X-\mu))|V] \le e^{\ell^2 t(V)^2/8}.
\enas
Applying $\E(\exp \theta V)=\exp(\theta^2/2)$, for $\ell \theta < \sqrt{2}$ and $V$ independent of $X_1,X_2,\ldots$, letting $t(V)= \sqrt{2} \theta \frac{V}{\sqrt{n}}$ we obtain
\begin{multline*}
\E\left[\exp\left(\theta^2\frac{S_n^2}{n}\right)\right] = \E\left[\exp\left( {\sqrt 2} \theta \frac{S_n}{\sqrt n}V \right)\right]
=\E\left[\E\left(\exp\left({\sqrt 2} \theta \frac{V}{\sqrt n}X\right)\Bvert V\right)^n \right]\\
= \E
\left[ 
\E\left(\exp(t(V)X)\Bvert V \right)^n 
\right]=\E
\left[ 
\E\left(\exp (t(V)(X-\mu)+t(V)\mu) \Bvert V \right)^n 
\right]\\
\le \E\left[\exp\left(\frac{2{\ell}^2\theta^2 V^2}{8n}+\sqrt{2}\theta \mu \frac{V}{\sqrt n} \right)^{n}\right] = \E \left[\exp \left (\frac{{\ell}^2 \theta^2}{4} V^2 + \sqrt{2}\theta \mu \sqrt n V\right) \right] \\
= \frac{1}{\sqrt{1- \ell^2 \theta^2/2}}\exp \left(\frac{n \theta^2 \mu^2}{1- {\ell}^2 \theta^2/2} \right)\le \frac{1}{1- \ell^2 \theta^2/2}\exp \left(\frac{n \theta^2 \mu^2}{1- {\ell}^2 \theta^2/2} \right),
\end{multline*}
where we have applied \eqref{chi} in the last line. It is now direct to verify that the property required by the lemma holds by letting $\vartheta_{{\ell(X)}}=1/(\sqrt{2}\ell(X))$, the unique positive solution to
\beas 
\frac{1}{1- {\ell(X)}^2 \theta^2/2}=\frac{4}{3}.
\enas 
\end{proof}

\begin{lem}
\label{h_Y}
Let $\e$ be a bounded, mean zero, variance $\sigma^2\in(0,\infty)$ random variable satisfying
$\E \e^3=0$.
Then the Stein coefficient $h_Y(y)$, given by \eqref{eq4} for $Y$ with the $\e$-zero bias distribution, is bounded.
\end{lem}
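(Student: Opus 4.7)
The plan is to derive an explicit closed form for $h_Y(y)$ and bound it directly. Let $M>0$ be an a.s. bound on $|\e|$. Since $Y$ has the $\e$-zero bias distribution, \eqref{Xbd.implies.X*bd} gives that the support of $Y$ is contained in $[-M,M]$, and \eqref{eq3} yields
\[
\sigma^2 p_Y(y) = \E[\e\,\mathds{1}(\e>y)].
\]
First I would compute $\E[Y\,\mathds{1}(Y>y)]$ via Fubini: since $Y$ has density $p_Y$,
\[
\sigma^2\E[Y\,\mathds{1}(Y>y)] = \int_y^\infty s\,\E[\e\,\mathds{1}(\e>s)]\,ds = \E\Bigl[\e\int_y^{\e\vee y} s\,ds\;\mathds{1}(\e>y)\Bigr] = \tfrac{1}{2}\E\bigl[\e(\e^2-y^2)\,\mathds{1}(\e>y)\bigr].
\]
Dividing by $\sigma^2 p_Y(y)$ then gives, on the set where $p_Y(y)>0$,
\[
h_Y(y) \;=\; \frac{\E[\e^3\,\mathds{1}(\e>y)]}{2\,\E[\e\,\mathds{1}(\e>y)]} \;-\; \frac{y^2}{2},
\]
while by its definition $h_Y(y)=0$ when $p_Y(y)=0$.

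Next I would bound the ratio above by splitting on the sign of $y$. When $y\ge 0$, the event $\{\e>y\}$ forces $\e>0$, on which $\e^3=\e\cdot\e^2\le M^2\e$; hence $0\le\E[\e^3\,\mathds{1}(\e>y)]\le M^2\,\E[\e\,\mathds{1}(\e>y)]$, so the ratio lies in $[0,M^2]$. When $y<0$, I would invoke the hypotheses $\E\e=0$ and $\E\e^3=0$ to rewrite
\[
\frac{\E[\e^3\,\mathds{1}(\e>y)]}{\E[\e\,\mathds{1}(\e>y)]} \;=\; \frac{\E[\e^3\,\mathds{1}(\e\le y)]}{\E[\e\,\mathds{1}(\e\le y)]},
\]
and now the event $\{\e\le y\}\subseteq\{\e<0\}$ gives $M^2\e\le\e^3\le 0$, so both numerator and denominator are nonpositive and the ratio again lies in $[0,M^2]$. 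Combining the two cases with $|y|\le M$ yields $|h_Y(y)|\le M^2/2 + y^2/2 \le M^2$, proving boundedness.

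There is no deep obstruction; the main work lies in the Fubini bookkeeping needed to get the closed form, and in handling the $y<0$ case, where one must swap $\{\e>y\}$ for the tail $\{\e\le y\}$ in order to pin down the sign of $\e^3$ relative to $\e$ on the relevant event. The assumption $\E\e^3=0$ is used precisely to legitimize this swap, while the assumption $\E\e=0$ plays the analogous role for the denominator and also ensures (via \eqref{eq:Ymean.zero.finite.var}) that $Y$ itself is a bona fide mean zero variable whose Stein coefficient is being constructed.
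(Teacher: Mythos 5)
Your proposal is correct and follows essentially the same route as the paper: you compute $\E[Y\,\mathds{1}(Y>y)]$ by Fubini to obtain the closed form $h_Y(y)=\frac{\E[\e^3\mathds{1}(\e>y)]}{2\,\E[\e\mathds{1}(\e>y)]}-\frac{y^2}{2}$, and then bound the ratio by $M^2$ using positivity of the integrand on $\{\e>y\}$ when $y\ge 0$; the paper performs the identical Fubini calculation and bound $u^2-t^2\le b^2$ for $t\ge 0$. The only cosmetic difference is the negative case: you exploit $\E\e=\E\e^3=0$ to swap to the left tail $\{\e\le y\}$, whereas the paper invokes the reflection identity $h_{-Y}(t)=h_Y(-t)$ (which ultimately rests on the same moment hypotheses, since they are what make $-\e$ satisfy the lemma's assumptions); both give the same bound.
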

\begin{proof} 
As $\e$ is a mean zero random variable with finite, nonzero variance, the zero bias distribution ${\cal L}(Y)$ exists. As $\E \e^3=0$ and $\e$ is bounded and non-trivial, as in \eqref{eq:Ymean.zero.finite.var} one verifies that $\E Y =0$ and that ${\rm Var}(Y)$ is positive and finite. Hence, as noted below \eqref{eq4}, the Stein coefficient $h_Y(y)$ as given by \eqref{eq4} is nonnegative, so we need only show that it is bounded above.

From \eqref{eq3}, an a.e. density of $Y$ is given by
\begin{equation}
\label{p_Y}
p_Y(y)=\frac{1}{\sigma^2}\int_y^\infty udF_{\e}(u)
\end{equation}
where we use $F_X$ to denote the distribution function of the random variable $X$. From \eqref{p_Y} we may observe that the support of $Y$ is the smallest closed interval of $\mathbb{R}$ containing the support of $\e$.
Since $\e$ is bounded and has mean zero, using \eqref{Xbd.implies.X*bd}, this interval is of the form $[a,b]$ for $-\infty<a<0<b<\infty$, hence for $t \in [a,b]$ the upper limit of the integral in \eqref{p_Y} may be replaced by $b$.

In particular, for all $t\in [0,b]$, by \eqref{eq4} we have
\begin{equation*}
\begin{split}
h_Y(t)&=\frac{\int_{t}^b yp_Y(y) dy}{p_Y(t)}\\
& = \frac{\int_{t}^b y  \int_y^b udF_{\e}(u) dy}{\sigma^2 p_Y(t)} \\
& = \frac{\int \int_{t \le y \le u \le b} y  udF_{\e}(u) dy}{\sigma^2 p_Y(t)}\\
& = \frac{\int_t^b u \int_t^u y  dy dF_{\e}(u) }{\sigma^2 p_Y(t)}\\
& = \frac{\int_t^b u (u^2-t^2) dF_{\e}(u) }{2\sigma^2 p_Y(t)}\\
& \le \frac{b^2 \int_t^b udF_{\e}(u) }{2\sigma^2 p_Y(t)} = \frac{b^2}{2},
\end{split}
\end{equation*}
where we have used Fubini's theorem in the fourth equality, and \eqref{p_Y} in the second and sixth. As $h_{-Y}(t)=h_Y(-t)$ we obtain that $h_Y(y)$ is bounded for $t \in [a,0]$.
\end{proof}

\noindent \textit{Proof of Theorem $\ref{thm3.1'}$:}
For short we write $S=\e_1+\e_2+ \dots + \e_n$ and $\widetilde S=S + Y$ with $Y$ is as in Lemma \ref{lem:tilde.smoothing}. As the third moment of $\e$ is zero and its fourth moment is finite, as in \eqref{eq:Ymean.zero.finite.var}, $Y$ has mean zero with finite variance, and hence so does $\widetilde S$.

By Lemma \ref{lem:tilde.smoothing}, $T=\sum\limits_{i=1}^{n}\e_i^2 - S Y+h_Y(Y)$ is a Stein coefficient for $\widetilde S$. 
Since $\e$ is bounded and the third moment of $\e$ is zero, Lemma \ref{h_Y} yields that $h_Y(Y)$ is bounded. Also $\e$ bounded implies $S$ is bounded. In addition, as $\e$ is bounded there exists some $B$ such that $|\e| \le B$, and \eqref{Xbd.implies.X*bd} implies $|Y| \le B$. Thus, we conclude that $|T|$ is bounded.

Now invoking Theorem \ref{thm1}, there exists a version of $\widetilde S$ and $Z \sim \mathcal{N}(0, \sigma^2)$ on the same probability space such that
\beas
\E \exp(\theta|\widetilde S-Z|)\le 2\E\left(\exp(2\theta^2\sigma^{-2}(T-\sigma^2)^2)\right) \qmq{for all $\theta \in \mathbb{R}$.}
\enas
Using $|Y| \le B$ we have $|S-\widetilde S|\le B$. It follows that,
\beas
\E \exp(\theta|S-Z|)\le 2\E\left(\exp(B|\theta| + 2\theta^2\sigma^{-2}(T-\sigma^2)^2)\right).
\enas

Letting $C_0 \ge B$ be such that $|h_Y(Y)|\le C_0$, and setting
$\sigma^2=n$, we obtain
\begin{equation*}
\frac{(T-\sigma^2)^2}{\sigma^2}\le \frac{3{\overline S}^2+3C_0^2 S^2+3C_0^2}{n}
\end{equation*}
where ${\overline S}=\sum\limits_{i=1}^{n} (\e_i^2-1)$. \\

Hence,
\begin{align}\label{eq:exp.convex}
\E \exp(\theta |S-Z|)  &\le 2\exp\left(B| \theta |+\frac{6C_0^2 \theta^2}{n} \right)\E \exp\left(6\theta^2\frac{{\overline S}^2 +C_0^2 S^2}{n}\right)\nonumber\\ 
 &\le \exp\left(B|\theta |+\frac{6C_0^2 \theta^2}{n} \right)\E\left[\exp\left(\frac{12\theta^2{\overline S}^2}{n}\right) +\exp\left(\frac{12\theta^2C_0^2 S^2}{n}\right)\right]
\end{align}
where we applied the simple inequality $\exp(x+y)\le (e^{2x}+e^{2y})/2$.\\\\
Noting for $\overline S$ and $S$ that $\e^2-1$ and $\e$ respectively are bounded and have mean zero, using Lemma \ref{hoeff} for the first two inequalities below, we see that there exists $\theta_1>0$ such that for all $|\theta| \le \theta_1$ and all positive integers $n$
\begin{equation*}
\E \exp(12\theta^2{\overline S}^2 /n)
\le 2\quad \text{and} \quad \E \exp(12\theta^2C_0^2 S^2/n)
\le 2 \qmq{and} \exp\left(B|\theta |+\frac{6C_0^2 \theta^2}{n} \right)\le 2.
\end{equation*}
Theorem \ref{thm3.1'} now follows from \eqref{eq:exp.convex}.
\qed\\

We now prepare for the proof of Theorem \ref{thm3.2'} by providing a few lemmas. For $\A$ the finite set in which the basic variable $\e$ takes values, let
\bea \label{eq:defcalD}
{\cal D}=\{b-a: a,b \in \A \} \qmq{and} {\cal D}^+ = {\cal D} \cap [0,\infty),
\ena
the set of differences of the elements in ${\A}$, and those differences that are non-negative. We note here that ${\cal D}$ is symmetric in that ${\cal D}=-{\cal D}$. Let also
\bea \label{def:B.is.max}
B=\max_{a \in {\cal A}}|a|.
\ena 

Recall the definition \eqref{eq:defWk} of $W_k$ and
observe that we may write
\begin{multline}
W_k=S_k-\frac{k}{n}S_n = \sum_{i=1}^k \e_{\pi(i)}-\frac{k}{n}\sum_{i=1}^n\e_{\pi(i)}=\frac{n-k}{n}\sum_{i=1}^k \e_{\pi(i)}-\frac{k}{n}\sum_{i=k+1}^n e_{\pi(i)}\\=\frac{1}{n}\sum_{i=1}^k \sum_{j=k+1}^n(\e_{\pi(i)}-\e_{\pi(j)}), \label{eq:WkDblSum}
\end{multline}
and therefore
\bea \label{eq:defWkd}
W_k=\sum\limits_{d\in {\cal D}^+} W_{k,d} \qmq{where}
W_{k,d}=\frac{1}{n}\sum\limits_{i=1}^{k}\sum\limits_{j=k+1}^{n}( \e_{\pi(i)} - \e_{\pi(j)})
 \mathds{1}_{(|\e_{\pi(i)} - \e_{\pi(j)}|=d)}.
\ena

\begin{lem}
Under the hypotheses of Theorem \ref{thm3.2'}, for any $\theta \in \mathbb{R}, 1\le k\le n$ and $d \in {\cal D}^+$ we have
\bea \label{eq9}
\E \exp(\theta W_{k,d}/\sqrt k)\le \exp(d^2 \theta^2/2) \qmq{and} \E \exp(\theta W_{k}/\sqrt k)\le \exp(B^2 \theta^2),
\ena
where $B$ is as in \eqref{def:B.is.max}. Further, there exists $\alpha_0>0$ depending only on ${\cal A}$ such that
\bea
\label{bdWkd}
\E [\exp (\alpha W_{k,d}^2/k)]\le 2 \qmq{for all  $|\alpha| \le \alpha_0$ and all $d \in {\cal D^+}$.}
\ena  
\label{lem3.4'}
\end{lem}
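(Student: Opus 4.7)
The plan is to write $W_{k,d}$ explicitly as a size-$k$ sample sum without replacement from a mean-zero, bounded population, and then combine Hoeffding's classical comparison inequality for sampling with versus without replacement, Hoeffding's lemma \eqref{Hoeffding.lemma}, and the identity \eqref{chi}.

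Step 1 (linearizing $W_{k,d}$). For $c \in \A$ let $n_c = \#\{i : \e_i = c\}$. Grouping the double sum in \eqref{eq:defWkd} by ordered pairs $(a,b) \in \A^2$ with $a-b=d$, and expanding $\#\{i \le k : \e_{\pi(i)} = a\} = \sum_{i \le k}\mathds{1}(\e_{\pi(i)}=a)$, one arrives at
\begin{equation*}
W_{k,d} = d \sum_{i=1}^k \psi_d(\e_{\pi(i)}), \qquad \psi_d(c) = \frac{n_{c-d}\mathds{1}(c-d \in \A) - n_{c+d}\mathds{1}(c+d \in \A)}{n}.
\end{equation*}
Clearly $\psi_d$ is $[-1,1]$-valued, and its population sum vanishes: after relabeling, both of the two halves $\sum_c n_c n_{c\pm d}\mathds{1}(c\pm d \in \A)$ reduce to $\sum_{(a,b): a,b \in \A,\, a-b=d} n_a n_b$. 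Therefore $\psi_d(\e_{\pi(1)}), \ldots, \psi_d(\e_{\pi(k)})$ is a size-$k$ uniform sample without replacement from a mean-zero population of range at most $2$.

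Step 2 (the two MGF bounds). Hoeffding's convex-ordering inequality applied with $\phi(x) = \exp(\theta d x)$ gives
\begin{equation*}
\E \exp(\theta W_{k,d}) \le \bigl(\E \exp(\theta d Y)\bigr)^k,
\end{equation*}
where $Y$ is a uniform draw from $\{\psi_d(\e_m)\}_{m=1}^n$. Hoeffding's lemma \eqref{Hoeffding.lemma} then yields $\E \exp(\theta d Y) \le \exp(\theta^2 d^2 /2)$, and hence $\E \exp(\theta W_{k,d}) \le \exp(k \theta^2 d^2/2)$, giving the first inequality of \eqref{eq9} upon replacing $\theta$ by $\theta/\sqrt k$. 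For the second inequality, the same argument is applied to $W_k = \sum_{i \le k}(\e_{\pi(i)} - S_n/n)$, which is itself a centered sample without replacement from a mean-zero population of range at most $2B$; this yields $\E \exp(\theta W_k/\sqrt k) \le \exp(B^2 \theta^2 /2) \le \exp(B^2 \theta^2)$.

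Step 3 (squared bound \eqref{bdWkd}). I apply the Gaussian dualization already used in the proof of Lemma \ref{hoeff}: for $\alpha \ge 0$ and $V \sim \mathcal N(0,1)$ independent of $W_{k,d}$,
\begin{equation*}
\E \exp(\alpha W_{k,d}^2/k) = \E \exp\bigl(\sqrt{2\alpha/k}\, V\, W_{k,d}\bigr).
\end{equation*}
Conditioning on $V$ and inserting the MGF bound from Step 2 gives $\E \exp(\alpha W_{k,d}^2/k) \le \E \exp(\alpha d^2 V^2) = (1 - 2\alpha d^2)^{-1/2}$ by \eqref{chi}, valid for $\alpha < 1/(2d^2)$. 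Choosing $\alpha_0 = 3/(8 d_{\max}^2)$, where $d_{\max} = \max {\cal D}^+$ depends only on $\A$, makes the right-hand side at most $2$ for every $d \in {\cal D}^+$ and every $0 \le \alpha \le \alpha_0$; for $-\alpha_0 \le \alpha < 0$ the bound is trivial since the exponent is nonpositive. The only real obstacle is Step 1: identifying the double-indexed, apparently quadratic object $W_{k,d}$ with a size-$k$ sample sum of the mean-zero function $\psi_d(\e_{\pi(\cdot)})$. Once this linearization is in place, the remainder is a routine assembly of Hoeffding comparison, Hoeffding's lemma, and the Gaussian/$\chi_1^2$ trick from Lemma \ref{hoeff}.
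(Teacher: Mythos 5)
Your proof is correct, and Steps 1--2 take a genuinely different route from the paper, while your Step 3 (the Gaussian dualization plus the $\chi_1^2$ identity \eqref{chi}) coincides with the paper's argument for \eqref{bdWkd}.

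For the two moment-generating-function bounds in \eqref{eq9}, the paper follows Chatterjee's exchangeable-pair scheme: it differentiates $m(\theta)=\E\exp(\theta W_{k,d}/\sqrt{k})$, introduces the transposed permutation $\pi'=\pi\circ(i,j)$, uses the bound $|W_{k,d}-W'_{k,d}|\le 2d$, and integrates the resulting differential inequality $|m'(\theta)|\le d^2|\theta|m(\theta)$. Your argument instead rests on an algebraic observation the paper does not make explicit: writing $(\e_{\pi(i)}-\e_{\pi(j)})\mathds{1}(|\e_{\pi(i)}-\e_{\pi(j)}|=d)=d[\mathds{1}(\e_{\pi(j)}=\e_{\pi(i)}-d)-\mathds{1}(\e_{\pi(j)}=\e_{\pi(i)}+d)]$, summing the inner index $j$ over $\{k+1,\dots,n\}$ by subtracting the $j\le k$ part from the full population count $n_{c\mp d}$, and noting that the residual double sum over $i,j\le k$ is antisymmetric under $i\leftrightarrow j$ and hence vanishes, one indeed obtains $W_{k,d}=d\sum_{i=1}^k\psi_d(\e_{\pi(i)})$ with $\psi_d(c)=(n_{c-d}-n_{c+d})/n\in[-1,1]$ and $\sum_{m=1}^n\psi_d(\e_m)=0$. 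This linearization turns the apparently quadratic object $W_{k,d}$ into a size-$k$ sample sum without replacement from a mean-zero, bounded population, to which Hoeffding's 1963 convex-ordering comparison and Hoeffding's lemma \eqref{Hoeffding.lemma} apply directly; the resulting constants are at least as good as the paper's (your $W_k$ bound is even $\exp(B^2\theta^2/2)$, improving the stated $\exp(B^2\theta^2)$). The paper's exchangeable-pair route avoids any explicit representation of $W_{k,d}$ and is closer in spirit to the Stein-method framework of the rest of the article, so it may transfer more easily to statistics that do not admit such a clean linearization; your route is more elementary and makes transparent that $W_{k,d}$ is a linear statistic of the sampled increments.
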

\begin{proof}
We may assume $d>0$ as the result is otherwise trivial. Fix an integer $k$ in $[1,n]$ and $d \in {\cal D}^+$, and let $m(\theta) := \E \exp(\theta W_{k,d}/\sqrt k)$.
We argue as in \cite{chatterjee2012new}. Since $W_{k,d}$ is bounded, the function $m(\theta)$ is differentiable and
differentiation and expectation may be interchanged. Hence, using \eqref{eq:defWkd} for the second equality,
\bea
\nonumber
m'(\theta)&=&\frac{1}{\sqrt k}\E(W_{k,d}\exp(\theta W_{k,d}/\sqrt k))\\  \label{eq8}
&=&\frac{1}{n\sqrt k}\sum\limits_{i=1}^{k}\sum\limits_{j=k+1}^{n}\E[(\e_{\pi(i)} - \e_{\pi(j)}) \mathds{1}_{(|\e_{\pi(i)} - \e_{\pi(j)}|=d)} \exp(\theta W_{k,d}/\sqrt k)].
\ena

Now, let $i$ and $j$ satisfying $1 \le i \le k < j \le n$ be arbitrary and let $\pi '=\pi \circ (i,j)$ where $(i,j)$ is the transposition
of $i$ and $j$. Then $(\pi, \pi')$ is an exchangeable
pair of random permutations. Let
$W_{k,d}'$ be defined as in \eqref{eq:defWkd} with $\pi'$ replacing $\pi$.
Using exchangeability for the first equality and the definition of $\pi'$ for the second,
\begin{align*}
\E[&(\e_{\pi(i)} - \e_{\pi(j)}) \mathds{1}_{(|\e_{\pi(i)} - \e_{\pi(j)}|=d)} \exp(\theta W_{k,d}/\sqrt k)]\\
=& \E[(\e_{\pi '(i)} - \e_{\pi '(j)}) \mathds{1}_{(|\e_{\pi'(i)} - \e_{\pi'(j)}|=d)} \exp(\theta W'_{k,d}/\sqrt k)]\\
=& \E[(\e_{\pi(j)} - \e_{\pi(i)}) \mathds{1}_{(|\e_{\pi(i)} - \e_{\pi(j)}|=d)} \exp(\theta W'_{k,d}/\sqrt k)]\\
=& -\E[(\e_{\pi(i)} - \e_{\pi(j)}) \mathds{1}_{(|\e_{\pi(i)} - \e_{\pi(j)}|=d)} \exp(\theta W'_{k,d}/\sqrt k)] .
\end{align*}
Averaging the first and last expressions yields
\begin{align}
&\E[(\e_{\pi(i)} - \e_{\pi(j)}) \mathds{1}_{(|\e_{\pi(i)} - \e_{\pi(j)}|=d)} \exp(\theta W_{k,d}/\sqrt k)]\nonumber\\
\label{eq:ave.pi0}=&\frac{1}{2}\E[(\e_{\pi(i)} - \e_{\pi(j)}) \mathds{1}_{(|\e_{\pi(i)} - \e_{\pi(j)}|=d)}(\exp(\theta W_{k,d}/\sqrt k) - \exp(\theta W'_{k,d}/\sqrt k ))].
\end{align}
Note
\begin{equation*}
\begin{split}
& |W_{k,d} - W'_{k,d}|\\
 = &\frac{1}{n}\Bvert \sum_{k+1 \le l \le n, l \not =j}(\e_{\pi(i)} - \e_{\pi(l)}) \mathds{1}_{(|\e_{\pi(i)} - \e_{\pi(l)}|=d)}+ \sum_{1 \le l \le k, l \not =i}(\e_{\pi(l)} - \e_{\pi(j)}) \mathds{1}_{(|\e_{\pi(l)} - \e_{\pi(j)}|=d)}\\
 & \qquad + (\e_{\pi(i)} - \e_{\pi(j)}) \mathds{1}_{(|\e_{\pi(i)} - \e_{\pi(j)}|=d)} - \Big(\sum_{k+1 \le l \le n, l \not =j}(\e_{\pi'(i)} - \e_{\pi'(l)}) \mathds{1}_{(|\e_{\pi'(i)} - \e_{\pi'(l)}|=d)}\\
 & \qquad + \sum_{1 \le l \le k, l \not =i}(\e_{\pi'(l)} - \e_{\pi'(j)}) \mathds{1}_{(|\e_{\pi'(l)} - \e_{\pi'(j)}|=d)}+ (\e_{\pi'(i)} - \e_{\pi'(j)}) \mathds{1}_{(|\e_{\pi'(i)} - \e_{\pi'(j)}|=d)}\Big) \Bvert\\
= & \frac{1}{n}\Bvert\sum\limits_{l=1}^n (\e_{\pi(i)} - \e_{\pi(l)}) \mathds{1}_{(|\e_{\pi(i)} - \e_{\pi(l)}|=d)} + \sum\limits_{l=1}^n (\e_{\pi(l)} - \e_{\pi(j)}) \mathds{1}_{(|\e_{\pi(l)} - \e_{\pi(j)}|=d)}\Bvert\\ \le &\frac{1}{n}[nd+ nd]=2d.
\end{split}
\end{equation*}

Now applying the inequality $|e^x - e^y|\le \frac{1}{2}|x-y|(e^x + e^y)$ we see that  \eqref{eq:ave.pi0} in absolute value is bounded by
\begin{equation*}
\begin{split}
& \frac{|\theta|}{4\sqrt k}\E[|\e_{\pi(i)} - \e_{\pi(j)}| \mathds{1}_{(|\e_{\pi(i)}-\e_{\pi(j)}|=d)}|W_{k,d} - W'_{k,d}|((\exp(\theta W_{k,d}/\sqrt k) + \exp(\theta W'_{k,d}/\sqrt k ))]\\
 &\le \frac{|\theta|}{4\sqrt k}2d^2 \E[\exp(\theta W_{k,d}/\sqrt k) + \exp(\theta W'_{k,d}/\sqrt k )]\\
 &= \frac{|\theta| d^2}{\sqrt k} m(\theta).
\end{split}
\end{equation*}

So, from \eqref{eq8}, and the fact that $1 \le i \le k$ and $k<j \le n$ are arbitrary, we obtain
\beas
|m'(\theta)|\le \frac{1}{n \sqrt k} \frac{|\theta| d^2}{\sqrt k} \sum\limits_{i=1}^{k}\sum\limits_{j=k+1}^{n} m(\theta) \le d^2 |\theta| m(\theta).
\enas
Now, using, $m(0)=1$, and that $m(\theta)\ge 0$  for all $\theta \in \mathbb{R}$, for $\theta > 0$, we obtain
\beas
\int_0^{\theta} \frac{m'(u)}{m(u)}du \le \int_0^{\theta} d^2 u du \implies m(\theta)\le \exp(d^2 \theta^2/2)
\enas
and for $\theta < 0$, we obtain
\beas
\int_{\theta}^0 \frac{-m'(u)}{m(u)}du \le \int_{\theta}^0 d^2(-u) du \implies m(\theta)\le \exp(d^2 \theta^2/2),
\enas
proving the first inequality of \eqref{eq9}.

Arguing similarly, now letting $m(\theta) := \E \exp(\theta W_k/\sqrt k)$ and $W'_k$ as in \eqref{eq:defWk} with $\pi'$ replacing $\pi$, noting $|W_k-W'_k|=|\e_{\pi(i)}-e_{\pi(j)}|\le 2B$, we obtain
\begin{equation*}
\begin{split}
\E[(\e_{\pi(i)} - \e_{\pi(j)}) \exp(\theta W_{k} /\sqrt k)]& \le \frac{|\theta|}{4\sqrt k}\E[(\e_{\pi(i)} - \e_{\pi(j)})^2 ((\exp(\theta W_{k}/\sqrt k) + \exp(\theta W_{k}'/\sqrt k ))]\\
&  \le \frac{|\theta|4 B^2}{4\sqrt k}2m(\theta)=\frac{2|\theta| B^2}{\sqrt k}m(\theta),
\end{split}
\end{equation*}
so that  $|m'(\theta)|\le 2 B^2 |\theta| m(\theta)$, implying the final inequality of \eqref{eq9}.

Turning to \eqref{bdWkd}, letting $Z$ be a standard normal random variable independent of $W_{k,d}$, by  \eqref{eq9} and \eqref{chi}, for all $d \in {\cal D}^+$ and $\alpha  < 1/(2d^2)$,  we have
\beas
\E \exp(\alpha W_{k,d}^2/k) & = \E \exp\left(\sqrt{2\alpha} Z W_{k,d}/\sqrt{k}  \right) 
\le \E \exp (d^2 \alpha Z^2)
\le \frac{1}{\sqrt {1-2d^2 \alpha}}.
\enas
Now set $\alpha_0$ so that the bound above is any number no greater than 2 when $d$ is replaced by $\max\{d: d \in {\cal D}^+\}$.
\end{proof}

\begin{lem}
Under the assumptions of Theorem \ref{thm3.2'} there exists $\alpha_1 >0$ depending only on $\A$ such that for all $n$, all $1 \le k\le 2n/3$,
and all $0 \le \alpha \le \alpha_1$,
\beas
\E \exp\left(\alpha S_k^2/k\right)\le \exp\left(1 + \frac{3\alpha S_n^2}{4n}\right).
\enas
\label{lem3.5'}
\end{lem}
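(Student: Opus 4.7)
Since the $\e_i$ are fixed deterministic elements of $\A$, only the permutation $\pi$ is random, so $S_n$ is a constant in this setting. My plan is a standard Gaussian chaos linearization: introduce $V\sim\mathcal{N}(0,1)$ independent of $\pi$ and use $\exp(u^2/2)=\E\exp(uV)$ to obtain
\[
\E \exp(\alpha S_k^2/k) = \E \exp\!\left(\sqrt{2\alpha/k}\,V\,S_k\right).
\]
Decomposing $S_k=(k/n)S_n + W_k$ with $W_k$ as in \eqref{eq:defWk} and conditioning on $V$ then pulls out a deterministic factor involving $S_n$ and reduces the problem to controlling the Laplace transform of $W_k$, which is exactly what the second inequality of Lemma \ref{lem3.4'} provides.

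Concretely, applying that inequality with $\theta=\sqrt{2\alpha}\,V$ (legitimate for any realization of $V$ since the bound holds for all real $\theta$) yields the conditional estimate $\E[\exp(\sqrt{2\alpha/k}\,V\,W_k)\mid V] \le \exp(2B^2\alpha V^2)$, where $B=\max_{a\in\A}|a|$. Hence
\[
\E \exp(\alpha S_k^2/k) \le \E \exp\!\left(2B^2\alpha V^2 + \frac{\sqrt{2\alpha k}\,S_n}{n}\,V\right).
\]
The right-hand side has the form $\E\exp(\alpha'V^2+\beta V)$, so invoking the non-central $\chi_1^2$ identity \eqref{chi} with $\alpha'=2B^2\alpha$ and $\beta=\sqrt{2\alpha k}\,S_n/n$, valid for $4B^2\alpha<1$, produces
\[
\E \exp(\alpha S_k^2/k) \le \frac{1}{\sqrt{1-4B^2\alpha}}\exp\!\left(\frac{\alpha k S_n^2}{n^2(1-4B^2\alpha)}\right).
\]

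To finish, I would set $\alpha_1 = 1/(36 B^2)$, which depends only on $\A$. For $\alpha \in [0,\alpha_1]$ one checks $1-4B^2\alpha \ge 8/9$, so the prefactor is at most $\sqrt{9/8} \le e$, and the hypothesis $k\le 2n/3$ gives
\[
\frac{\alpha k S_n^2}{n^2(1-4B^2\alpha)} \le \frac{(2/3)\,\alpha S_n^2}{(8/9)\,n} = \frac{3\alpha S_n^2}{4n},
\]
exactly reproducing the target coefficient. Combining these two observations yields the stated bound $\exp(1 + 3\alpha S_n^2/(4n))$. The only delicate point is this simultaneous calibration of constants: the hypothesis $k\le 2n/3$ and the distortion factor $1/(1-4B^2\alpha)$ must combine to reproduce the precise coefficient $3/4$ in the exponent while also leaving room for the prefactor to be absorbed into the additive constant $1$; both requirements are accommodated by taking $\alpha_1$ proportional to $1/B^2$.
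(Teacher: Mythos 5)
Your proposal is correct and follows essentially the same route as the paper's proof: the same Gaussian linearization $\exp(u^2/2)=\E\exp(uV)$, the decomposition $S_k=W_k+(k/n)S_n$, the second inequality of \eqref{eq9} applied conditionally on $V$, and the non-central $\chi_1^2$ identity \eqref{chi}, followed by the same calibration using $k\le 2n/3$. The only difference is cosmetic: you make the choice $\alpha_1=1/(36B^2)$ explicit, whereas the paper simply takes $\alpha_1$ small enough that $1/(1-4\alpha_1 B^2)$ is close to $1$.
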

\begin{proof}
The steps are the same as in the proof of Lemma 3.5 of \cite{chatterjee2012new}.
For $Z$ a standard normal random variable independent of $\pi$, by  definition \eqref{eq:defWk} of $W_k$ we have
\begin{equation*}
\begin{split}
\E \exp(\alpha S_k^2/k) & = \E \exp\left(\sqrt \frac{2\alpha}{k}S_k Z \right) \\
& = \E \exp\left(\sqrt \frac{2\alpha}{k} W_k Z + \sqrt \frac{2\alpha}{k} \frac{kS_n}{n} Z\right).
\end{split}
\end{equation*}
By \eqref{eq9}, with $B$ given by \eqref{def:B.is.max}, for the first term we obtain the bound
\beas
\E \left[\exp \left(\sqrt{2\alpha} Z W_k/\sqrt{k}\right) \Bvert Z\right] \le \exp(2 \alpha B^2 Z^2). 
\enas
Thus,
\beas
\E \exp(\alpha S_k^2/k) \le \E \exp\left(2\alpha B^2 Z^2 + \sqrt \frac{2\alpha}{k} \frac{kS_n}{n} Z \right).
\enas
Recalling $S_n$ is nonrandom, using the non central $\chi_1^2$ identity \eqref{chi}, we  find 
that
\beas
\E \exp(\alpha S_k^2/k) \le \frac{1}{\sqrt {1-4 \alpha B^2}} \exp\left(\frac{\alpha k S_n^2}{(1-4\alpha B^2 ) n^2}\right)
\qmq{for $0 < \alpha < 1/(4B^2)$.}
\enas
The proof of the lemma is now completed by bounding $k$ by $2n/3$ and choosing $\alpha_1>0$ small enough so that $1/(1-4\alpha_1 B^2) $ is sufficiently close to 1.
\end{proof}
\noindent \textit{Proof of Theorem \ref{thm3.2'}:} We assume $\theta >0$. Applying our convention that zero variance normal random variables are equal to their mean almost surely, when $n=1$ we have $S_0=W_0=W_1=Z_0=Z_1=0$ and  the result holds trivially, so we assume $n \ge 2$. Recalling definition \eqref{eq:defcalD} of ${\cal D}^+$
for each $d>0$ in ${\cal D}^+$ and that ${\cal D}$ is symmetric, let $Y_d$ have the uniform $\U[-d/2,d/2]$ distribution, and be independent of each other and of the uniform random permutation $\pi$, and for $d=0$ let $Y_0=0$. Set
\beas
Y = \sum\limits_{d \in {\cal D}^+} Y_d.
\enas

For arbitrary $i,j$ satisfying $1\le i \le k <j \le n$ let ${\cal F}_{ij}=\sigma\{\pi(l): l \not \in \{i,j\}\}$.
Regarding the collection $\{\e_1,\ldots,\e_n\}$ as a multiset, we have
\beas 
\{\varepsilon_{\pi(i)},\varepsilon_{\pi(j)}\} = \{\varepsilon_i, i=1,\ldots, n\} \setminus  \{\varepsilon_{\pi(l)}, l \not \in \{i,j\}\},
\enas
showing that $\{\varepsilon_{\pi(i)},\varepsilon_{\pi(j)}\}$, and therefore also $\varepsilon_{\pi(i)}+\varepsilon_{\pi(j)}$
and $d_{ij}:=|\varepsilon_{\pi(i)}-\varepsilon_{\pi(j)}|$
are measurable with respect to ${\cal F}_{ij}$.
Further, the conditional distribution of
\beas
X_{ij}:=\frac{\varepsilon_{\pi(i)} - \varepsilon_{\pi(j)}}{2}
\enas
given ${\cal F}_{ij}$ is uniform over the set $\{-d_{ij}/2,d_{ij}/2\}$.

Let $S_k^{(i)}=S_k-\varepsilon_{\pi(i)}, W_k^{(i)}= S_k^{(i)}-(k/n)S_n$ and $Y^{(ij)}=Y-Y_{d_{ij}}$. 
For $\varepsilon_{\pi(i)}\not =\varepsilon_{\pi(j)}$, applying Lemma \ref{lem3.3'} and the easily verified fact that 
the zero bias distribution of the variable that takes the values $\{-a,a\}$ with equal probability
is uniform over $[-a,a]$, for some fixed Lipschitz function $f$,
we have
\begin{align*}
\E[&(\varepsilon_{\pi(i)}-\varepsilon_{\pi(j)}) f(W_k+Y)| {\cal F}_{ij}  ] \\
=&  2\E[X_{ij}f(X_{ij}+Y_{d_{ij}}+W_k^{(i)}+(\varepsilon_{\pi(i)}+\varepsilon_{\pi(j)})/2+ Y^{(ij)})|  {\cal F}_{ij} ]\\
=&  2\E[(X_{ij}^2-X_{ij}Y_{d_{ij}})f'(X_{ij}+Y_{d_{ij}}+W_k^{(i)}+(\varepsilon_{\pi(i)}+\varepsilon_{\pi(j)})/2+ Y^{(ij)})| {\cal F}_{ij} ]\\
=&  2\E[(X_{ij}^2-X_{ij}Y_{d_{ij}})f'(W_k+Y)|{\cal F}_{ij}] \\
=&  2\E[(d_{ij}^2/4-X_{ij}Y_{d_{ij}})f'(W_k+Y)| {\cal F}_{ij} ].
\end{align*}
We note that the equality between the first and final terms above holds also when $\varepsilon_{\pi(i)}=\varepsilon_{\pi(j)}$, both sides being zero. Taking expectation we obtain
\bea \label{eq:W_kpart}
\E[(\varepsilon_{\pi(i)}-\varepsilon_{\pi(j)}) f(W_k+Y)] = \E[t_{ij}f'(W_k+Y)]
\ena
where
\bea \label{def:tij}
t_{ij}= 2\left(\frac{d_{ij}^2}{4}-X_{ij}Y_{d_{ij}}\right) = \frac{\varepsilon_{\pi(i)}^2+\varepsilon_{\pi(j)}^2}{2}-\varepsilon_{\pi(i)}\varepsilon_{\pi(j)} -(\varepsilon_{\pi(i)}-\varepsilon_{\pi(j)})Y_{d_{ij}}.
\ena

It is easy to verify using \eqref{eq4}, or by integration by parts, that for $U \sim {\U}[-a,a]$,
\beas
\E[Uf(U)]=\frac{1}{2}\E[(a^2-U^2)f'(U)],
\enas
implying
\begin{multline} \label{eq:Ypart}
 \E [Y f(W_k+Y)]= \sum\limits_{d \in {\cal D}^+} \E[Y_d f(Y_d + W_k + (Y-Y_d)]\\
= \frac{1}{2}\sum\limits_{d \in {\cal D}^+} \E\left[\left(\frac{d^2}{4}-Y_d^2\right) f'(Y_d + W_k + (Y-Y_d)\right]
=\E[R_4 f'(W_k+Y)],
\end{multline}
where
\beas
R_4= \frac{1}{2}\sum\limits_{d \in {\cal D}^+}\left(\frac{d^2}{4}-Y_d^2\right).
\enas
Since ${\cal D}^+$ is finite there exists $C_0>0$ so that
\bea\label{eq12}
|R_4| \le C_0.
\ena
From \eqref{eq:WkDblSum},
\beas 
W_k=\frac{1}{n}\sum\limits_{i=1}^k \sum\limits_{j=k+1}^n (\varepsilon_{\pi(i)}-\varepsilon_{\pi(j)}),
\enas
so lettting
\bea \label{eq:defnWtilde.in.proof}
\widetilde{W}_k = W_k+Y
\ena
and combining \eqref{eq:W_kpart} and \eqref{eq:Ypart}, we have
\bea\label{eq14}
\E[\widetilde{W}_k f(\widetilde{W}_k)]=\E[T f'(\widetilde{W}_k)],
\ena
where the Stein coefficient $T$, in light of \eqref{def:tij},
is given by
\beas
T=\frac{1}{n}\sum\limits_{i=1}^k \sum\limits_{j=k+1}^n t_{ij}+R_4
= R_1- R_2-R_3 + R_4,
\enas
where
\beas
R_1=\frac{1}{2n} \left((n-k) \sum\limits_{i=1}^k \varepsilon_{\pi(i)}^2 + k \sum\limits_{j=k+1}^n \varepsilon_{\pi(j)}^2  \right), \quad
R_2=\frac{1}{n}\sum\limits_{i=1}^k \varepsilon_{\pi(i)} \sum\limits_{j=k+1}^n \varepsilon_{\pi(j)},
\enas
and
\begin{multline*}
R_3=\frac{1}{n}\sum\limits_{1 \le i \le k < j \le n}
 (\epsilon_{\pi(i)}-\epsilon_{\pi(j)})Y_{d_{ij}}\\
 =
 \sum_{d \in {\cal D}^+} Y_d \frac{1}{n}\sum_{1 \le i \le k < j \le n}  (\epsilon_{\pi(i)}-\epsilon_{\pi(j)})\mathds{1}(|\epsilon_{\pi(i)}-\epsilon_{\pi(j)}|=d)
=\sum\limits_{d \in {\cal D}^+} Y_d W_{k,d},
\end{multline*}
with $W_{k,d}$ as in \eqref{eq:defWkd}.
Since $|Y_d|\le d/2$, we have
\bea\label{eq16}
|R_3|\le \sum\limits_{d \in {\cal D}^+} \frac{d}{2} |W_{k,d}|.
\ena

Recalling that $\nu>0$ is a given fixed number, and that
\beas
\gamma^2=\frac{1}{n}\sum\limits_{i=1}^n \varepsilon_i^2, \qmq{set}
\widetilde \sigma^2 = \frac{k(n-k)\gamma^2}{n} \qmq{and} \sigma^2 = \frac{k(n-k)\eta^2}{n},
\enas
for a positive constant $\eta \ge \nu$, noting that since $n \ge 2$ both $\sigma^2$ and $\widetilde \sigma^2$ are positive. 
Then,
\begin{multline} \label{eq:breakTinto4}
\frac{\left(T-\sigma^2 \right)^2}{\sigma^2} = \frac{n}{k(n-k)\eta^2}\left( R_1-\sigma^2 -R_2-R_3+R_4\right)^2 \\
\le \frac{n}{k(n-k)\nu^2}\left( R_1-\sigma^2 -R_2-R_3+R_4\right)^2.
\end{multline}
To bound this quantity, consider first
\begin{multline*}
\begin{split}
&R_1-\widetilde \sigma^2\\
=&\frac{1}{2n} \left((n-k) \sum\limits_{i=1}^k \varepsilon_{\pi(i)}^2 + k \sum\limits_{j=k+1}^n \varepsilon_{\pi(j)}^2 \right)  - \widetilde \sigma^2\\
=&\frac{1}{2n} \left((n-k) \sum\limits_{i=1}^k \varepsilon_{\pi(i)}^2 + k \sum\limits_{j=k+1}^n \varepsilon_{\pi(j)}^2 \right) - \frac{k(n-k)}{n^2} \sum\limits_{i=1}^n \varepsilon_{\pi(i)}^2 \\
=&\frac{1}{2n} \left((n-k) \sum\limits_{i=1}^k \varepsilon_{\pi(i)}^2 + k \sum\limits_{j=k+1}^n \varepsilon_{\pi(j)}^2  - \frac{2k(n-k)}{n} \sum\limits_{i=1}^n \e_{\pi(i)}^2\right) \\
=&\frac{1}{2n} \left((n-k) \left( \sum\limits_{i=1}^k \varepsilon_{\pi(i)}^2  - \frac{k}{n} \sum\limits_{i=1}^n \varepsilon_{\pi(i)}^2 \right) + k\left(  \sum\limits_{j=k+1}^n \varepsilon_{\pi(j)}^2 - \frac{n-k}{n} \sum\limits_{j=1}^n \varepsilon_{\pi(j)}^2 \right) \right) \\
=& \frac{1}{2n} \left((n-k) \left( \frac{n-k}{n}\sum\limits_{i=1}^k \varepsilon_{\pi(i)}^2  - \frac{k}{n} \sum\limits_{i=k+1}^n \varepsilon_{\pi(i)}^2 \right) + k\left(  \frac{k}{n} \sum\limits_{j=k+1}^n \varepsilon_{\pi(j)}^2 - \frac{n-k}{n} \sum\limits_{j=1}^k \varepsilon_{\pi(j)}^2 \right) \right) \\
=&\frac{1}{2n^2} \left( \left( (n-k)^2 - k(n-k)\right)\sum\limits_{i=1}^k \varepsilon_{\pi(i)}^2 - \left((n-k)k- k^2\right)\sum\limits_{j=k+1}^{n} \varepsilon_{\pi(j)}^2\right) \\
=&\frac{n-2k}{2n^2}\left( (n-k) \sum_{i=1}^k \varepsilon_{\pi(i)}^2 - k\sum_{j=k+1}^n \varepsilon_{\pi(j)}^2 \right) .
\end{split}
\end{multline*}
Hence, for all $k = 1,2,\ldots,n$, with $B$ as in \eqref{def:B.is.max}, 
\begin{equation*}
\begin{split}
|R_1-\sigma^2| &\le \frac{|n-2k|}{2n^2} \left( (n-k) \sum_{i=1}^k \varepsilon_{\pi(i)}^2 + k\sum_{i=k+1}^n \varepsilon_{\pi(j)}^2 \right) + |\widetilde \sigma^2 - \sigma^2| \\
& \le \frac{|n-2k|}{2} \gamma^2 + |\widetilde \sigma^2 - \sigma^2| \le \frac{|n-2k|}{2} B^2 + |\widetilde \sigma^2 - \sigma^2|.
\end{split}
\end{equation*}

Choosing $k$ such that $|n-2k| \le 1$, we obtain
\bea \label{eq17}
|R_1 -\sigma^2| \le \frac{B^2}{2} + |\widetilde \sigma^2 - \sigma^2|.
\ena

Regarding $R_2$, for any $k \in \{1,\ldots,n\}$ we have
\bea \label{eq18}
|R_2| = \frac{1}{n}\Bvert\sum\limits_{i=1}^k  \varepsilon_{\pi(i)} \sum\limits_{j=k+1}^n \varepsilon_{\pi(j)} \Bvert \le B|S_k|.
\ena
Hence, for $k$ such that $|2k-n| \le 1$, from \eqref{eq:breakTinto4}, \eqref{eq17}, \eqref{eq18}, \eqref{eq16} and \eqref{eq12}, and that $|\widetilde \sigma^2 - \sigma^2|=|\frac{k(n-k)}{n}(\gamma^2-\eta^2)|$, we obtain
\begin{equation*}
\begin{split}
\frac{(T - \sigma^2)^2}{\sigma^2} &\le\frac{n}{k(n-k){\nu}^2}\left(B^2/2 + |\widetilde \sigma^2 - \sigma^2| + B|S_k| + \sum\limits_{d \in {\cal D}^+} \frac{d}{2} |W_{k,d}| + C_0\right)^2\\
&\le C\left(1 + n(\gamma^2-\eta^2)^2 +\frac{S_k^2}{k} +\sum\limits_{d \in {\cal D}^+} \frac{W_{k,d}^2}{k}\right)
\end{split}
\end{equation*}
for some constant $C$ depending uniquely on $\A$ and $\nu$.

We now verify that the hypotheses of Theorem \ref{thm1} hold for ${\widetilde W}_{{k}}$ of \eqref{eq:defnWtilde.in.proof} and $T$.
Clearly ${\widetilde W}_k$  satisfies 
$\E({\widetilde W}_{k}) = 0$ and $\E({\widetilde W}_{k}^2)< \infty$. By \eqref{eq14}, $T$ is a Stein coefficient for ${\widetilde W}_{k}$, and $T$ is easily verified to be bounded. Writing $Z$ for short for $\eta Z_k$ in the statement of Theorem \ref{thm3.2'}, we note that $Z$ is distributed ${\cal N}(0,\sigma^2)$, and 
by Theorem \ref{thm1} we can construct a version of ${\widetilde W}_{{k}}$ and $Z$ on the same probability space so that for all $\theta$,
\begin{multline*}
\E \exp(\theta|{\widetilde W}_{k} - Z|)
\le 2\E \exp(2\theta ^2 \sigma^{-2}(T-\sigma^2)^2) \\
\le
2 \E \exp \left( 2 C \theta^2 \left( 1 + n(\gamma^2-\eta^2)^2+ \frac{S_k^2}{k} +\sum\limits_{d \in {\cal D}^+} \frac{W_{k,d}^2}{k} \right) \right). 
\end{multline*}
With $D=\frac{1}{2}\sum_{d \in {\cal D}^+} d$ we have
$|W_{k} - {\widetilde W}_{k}| \le |Y| \le \sum\limits_{d \in {\cal D}^+} |Y_d| \le D$. Letting $q=|{\cal D}^+|+1$, we have
\begin{align*}
& \E \exp (\theta |W_{k} - Z|) \\
&\le 2 \exp(D|\theta| + 2C\theta^2+ 2C\theta^2 n(\gamma^2-\eta^2)^2)\E \exp \left( 2C\theta^2 \frac{S_k^2}{k} +2C\theta^2 \sum\limits_{d \in {\cal D}^+} \frac{W_{k,d}^2}{k}\right)\\
&\le \frac{2}{q} \exp(D|\theta| + 2C\theta^2+ 2C\theta^2 n(\gamma^2-\eta^2)^2)\left( \E \exp \left( 2Cq\theta^2 \frac{S_k^2}{k}\right) +
\sum_{d \in {\cal D}^+} \E \exp \left(
2Cq\theta^2 \frac{W_{k,d}^2}{k}\right) \right),
\end{align*}
by the convexity of the exponential function. Using Lemmas \ref{lem3.5'} and \ref{lem3.4'}, there exists
$\theta_3>0$ depending only on $\A$ and $\nu$ such that for all $\theta \le \theta_3$, we obtain
\begin{equation*}
\begin{split}
\E \exp (\theta |W_{k} - Z|) &\le \frac{2}{q} \exp(D |\theta| + 2C\theta^2+ 2C\theta^2 n(\gamma^2-\eta^2)^2)\left( \exp \left(1+ 6Cq\theta^2 \frac{S_n^2}{4n}\right) +
2(q-1)\right)\\
& \le 2\exp(D |\theta| + 2C\theta^2+ 2C\theta^2 n(\gamma^2-\eta^2)^2)\left( \exp \left(1+ 6Cq\theta^2 \frac{S_n^2}{4n}\right) +
2\right).
\end{split}
\end{equation*}
Now choose $\theta_4 > 0$, depending only on ${\cal A}$ and $\nu$, so that 
\beas
2\exp(D \theta_4 + 2C \theta_4^2)\le e \qmq{and note} 
 \exp \left(1+ \theta^2 x\right) +
2 \le \exp \left(2+ \theta^2 x\right) \qm{for all $x \ge 0$,}
\enas 
implying
that for $\theta\le \theta_2:=\theta_3 \wedge \theta_4$,
\beas
\begin{split}
\E \exp (\theta |W_{k} - Z|) &\le& \exp \left(1+2C\theta^2 n(\gamma^2-\eta^2)^2 +2+6Cq\theta^2 \frac{S_n^2}{4n} \right)\\
&=& \exp \left(3 + 6Cq\theta^2 \frac{S_n^2}{4n} + 2C\theta^2 n(\gamma^2-\eta^2)^2\right),
\end{split}
\enas
which is the desired bound. \qed

\section{The Induction Step}\label{sec3}
In this section we present Theorem \ref{thm4.1'}, which we use to prove Theorem \ref{exchangeable} that generalizes Theorem 1.4 in \cite{chatterjee2012new}. Let $\epsilon_1, \epsilon_2, \dots \epsilon_n$ be arbitrary elements of a finite set $\A \subset \mathbb{R}$, not necessarily distinct, and let $\pi$ be a uniform random permutation of $\{1,2,\dots ,n \}$. For each $1\le k \le n$ recall
\bea \label{eq:SkWkind}
S_k=\sum_{i=1}^k \epsilon_{\pi(i)} \qmq{and} W_k=S_k - \frac{kS_n}{n}.
\ena

We show $(W_1,\ldots,W_n)$ and a positive multiple of a Gaussian vector $(Z_1,\ldots,Z_n)$ obtained by evaluating a Brownian bridge process on $[0,n]$ at integer time points can be coupled on the same space so that the moment generating function of their maximum absolute difference achieves the exponential bound \eqref{eq20} below. In place of coupling, the result of the theorem can be equivalently stated in terms of the existence of a joint probability function $\rho_{\pmb{\e}}^n(\s, \mz)$ on $(S_1,\ldots,S_n)$ and $(Z_1,\ldots,Z_n)$ having the correct marginals whose joint realization obeys the desired bound.

It will be helpful to regard the collection $\pmb{\e}=\{\e_1,\ldots, \e_n\}$ as a multiset.
We say $\s \in \mathbb{R}^n$ is a `path' corresponding to a multiset of `increments' $\pmb{\e}$ when there exists $\pi \in {\cal P}_n$, the set of permutations on $\{1,\ldots,n\}$, such that $\s$ can be achieved by summing the increments $\e$ in the order given by $\pi$, that is, when $\s$ is an element of the set of all feasible paths
\bea\label{eq22}
\A_{\pmb{\e}}^n := \{\s \in \mathbb{R}^n : s_k= \sum_{i=1}^k \e_{\pi(i)}, k=1,\ldots,n, \pi \in {\cal P}_n\}.
\ena
Conversely, the multiset of increments corresponding to a path $\s$ is given by
\bea \label{eq:def.incrementsfrompath}
\pmb{\e}^{\s}{=\{s_1,s_2-s_1,\ldots,s_n-s_{n-1}\}},
\ena
so that $\s \in {\cal A}_{\pmb{\e}}^n$ if and only if $\pmb{\e}^{\s}=\pmb{\e}$.

Suppose that among $\pmb{\e}$ are $l$ distinct numbers, appearing with multiplicities $m_1, \dots, m_l$, necessarily summing to $n$. Then letting $f_{\pmb{\e}}^n(\s)$ be the probability mass function of $(S_1,\ldots,S_n)$ as given by \eqref{eq:SkWkind}, we have
\bea\label{eq23}
|\A_{\pmb{\e}}^n| = \frac{n!}{m_1! m_2! \dots m_l!} \qmq{and} 
f_{\pmb{\e}}^n(\s) = \frac{1}{|\A_{\pmb{\e}}^n|} 
{\mathds{1}}(\s \in \A_{\pmb{\e}}^n)=\frac{1}{|\A_{\pmb{\e}}^n|} 
{\mathds{1}}(\pmb{\e}^{\s}=\pmb{\e}),
\ena
that is, the distribution $f_{\pmb{\e}}^n(\s)$ is uniform
over $\A_{\pmb{\e}}^n$.

The following result is a conditional version of Theorem \ref{exchangeable}.
\begin{thm}\label{thm4.1'}
Let $\epsilon_1, \epsilon_2, \dots \epsilon_n$ be arbitrary elements of a finite set $\A \subset \mathbb{R}$, not necessarily distinct, $\pi$ a uniform random permutation of $\{1,2,\dots,n\}$,
$S_k$ and $W_k$ as in \eqref{eq:SkWkind}, and 
$\gamma^2 =n^{-1} \sum_{i=1}^n \e_i^2$.
Then there exists a positive universal constant $C$, and for every $\nu>0$ positive constants $K_1, K_2$ and $\lambda_0$ depending only on $\A$ and $\nu$ such that for any integer $n \ge 1$ and every $\eta \ge \nu$ one may construct a version of ${(W_k)}_{0\le k \le n}$ and Gaussian random variables ${(Z_k)}_{0\le k \le n}$ with zero mean and covariance
\bea\label{eq19}
{\rm Cov}(Z_i, Z_j) = \frac{(i\wedge j)(n - (i\vee j))}{n}
\ena
on the same probability space such that
\begin{multline} \label{eq20}
\E \exp(\lambda \max_{0 \le i \le n} |W_i - \eta Z_i|) 
\\ \le \exp\left(C \log n + \frac{K_1 \lambda^2 S_n ^2}{n} +K_2\lambda^2 n(\gamma^2-\eta^2)^2\right) \qmq{for any $\lambda \le \lambda_0$.}
\end{multline}
\end{thm}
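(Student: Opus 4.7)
The plan is strong induction on $n$, following the dyadic approach of \cite{chatterjee2012new}. The base case $n=1$ is immediate since $W_0 = W_1 = Z_0 = Z_1 = 0$. For the inductive step, pick $k$ with $|2k - n| \le 1$ and invoke Theorem \ref{thm3.2'} to construct a joint realization of $W_k$ and a Gaussian $\eta Z_k$, with $Z_k \sim \mathcal{N}(0, k(n-k)/n)$, satisfying
\[
\E \exp(\theta |W_k - \eta Z_k|) \le \exp\!\left(3 + c_1 \theta^2 S_n^2/n + c_2 \theta^2 n (\gamma^2 - \eta^2)^2\right) \qmq{for $\theta \le \theta_2$.}
\]
Let $\pmb{\e}_L = \{\e_{\pi(1)}, \dots, \e_{\pi(k)}\}$, $\pmb{\e}_R = \pmb{\e} \setminus \pmb{\e}_L$, and set $W_i^L = S_i - (i/k) S_k$ and $\tilde Z_i = Z_i - (i/k) Z_k$ for $0 \le i \le k$, with symmetric definitions $W_i^R$ and $\tilde Z_i^R$ on $[k,n]$. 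A direct computation from the covariance structure \eqref{eq19} shows that, conditionally on $Z_k$, the processes $\tilde Z$ and $\tilde Z^R$ are independent centered Gaussians with Brownian-bridge covariance on their respective subintervals, each independent of $Z_k$.

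Next, build the two half-couplings recursively. Conditionally on the random partition $(\pmb{\e}_L, \pmb{\e}_R)$, the subpath $(S_i)_{0\le i\le k}$ is a uniformly random path with increment multiset $\pmb{\e}_L$, independent of the right subpath, with analogous facts on $[k,n]$. Apply the inductive hypothesis conditionally on $(\pmb{\e}_L, Z_k)$ to couple $(W_i^L)_{0\le i\le k}$ with $\eta \tilde Z$, and independently on the right, obtaining
\[
\E[\exp(\lambda L_L) \mid \pmb{\e}_L] \le \exp\!\left(C \log k + K_1 \lambda^2 S_k^2/k + K_2 \lambda^2 k (\gamma_L^2 - \eta^2)^2\right),
\]
where $L_L = \max_{0 \le i \le k}|W_i^L - \eta \tilde Z_i|$, $\gamma_L^2 = k^{-1}\sum_{\e \in \pmb{\e}_L}\e^2$, with the analogous bound for $L_R$. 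The telescoping identity $W_i - \eta Z_i = (W_i^L - \eta \tilde Z_i) + (i/k)(W_k - \eta Z_k)$ for $0 \le i \le k$ (and its right-half counterpart) gives
\[
\max_{0\le i \le n}|W_i - \eta Z_i| \le |W_k - \eta Z_k| + \max\{L_L, L_R\}.
\]
Taking expectations, bounding $\exp(\lambda \max\{L_L, L_R\}) \le \exp(\lambda L_L) + \exp(\lambda L_R)$, applying Cauchy--Schwarz to split the product of midpoint and half-coupling factors, then averaging over the split $(\pmb{\e}_L,\pmb{\e}_R)$ with the help of Lemma \ref{lem3.5'} to convert $\E\exp(c\lambda^2 S_k^2/k)$ into an expression in $S_n^2/n$, closes the induction, provided $C, K_1, K_2$ are tuned so that the additive constant $3$ from the midpoint and $\log k + \log(n-k) \le 2\log n$ from the recursion fit within the $C \log n$ budget.

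The principal obstacle is controlling $k(\gamma_L^2 - \eta^2)^2 + (n-k)(\gamma_R^2 - \eta^2)^2$ in terms of $n(\gamma^2 - \eta^2)^2$. Pathwise, the linear identity $k(\gamma_L^2 - \eta^2) + (n-k)(\gamma_R^2 - \eta^2) = n(\gamma^2 - \eta^2)$ holds, but Cauchy--Schwarz on the squares runs in the wrong direction. The remedy is to split $\gamma_L^2 - \eta^2 = (\gamma_L^2 - \gamma^2) + (\gamma^2 - \eta^2)$ and control the exponential moment of the sampling fluctuation $k(\gamma_L^2 - \gamma^2)^2$ by applying a variant of Lemma \ref{lem3.5'} to the bounded mean-zero variables $\e_i^2 - \gamma^2$ drawn without replacement; this yields a bounded multiplicative factor per recursion level, absorbable into the $C\log n$ term. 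The constants $C, K_1, K_2$ (functions of $\A$ and $\nu$ only) and the cutoff $\lambda_0$ must all be fixed simultaneously so that the Cauchy--Schwarz doubling of $\lambda$ remains below $\theta_2$ and so that the recursion coefficients reproduce themselves exactly; this arithmetic fine-tuning is what pins down the explicit value of $C$ referenced in \eqref{eq21}.
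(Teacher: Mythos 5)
Your architecture is the same as the paper's: couple the midpoint via Theorem \ref{thm3.2'}, split the increments uniformly given $S_k$, decompose the bridge as $Z_i=\tilde Z_i+(i/k)Z_k$ (your covariance computation showing $\tilde Z$, $\tilde Z^R$ are independent bridges independent of $Z_k$ is correct), bound the max additively, apply Cauchy--Schwarz and Lemma \ref{lem3.5'}, and recurse. The one genuinely different ingredient is your treatment of the variance-mismatch term, and that is exactly where the proposal has gaps. First, the closing budget as you state it is wrong: ``$\log k+\log(n-k)\le 2\log n$ fits within the $C\log n$ budget'' cannot work, since the same constant $C$ must reproduce itself under the recursion and no $C$ absorbs $2C\log n$ into $C\log n$. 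What actually closes the induction is that after $\exp(\lambda\max\{L_L,L_R\})\le\exp(\lambda L_L)+\exp(\lambda L_R)$ each additive term carries only \emph{one} factor $\exp(C\log k)$ (resp.\ $\exp(C\log(n-k))$), and since $k\vee(n-k)\le 2n/3$ one gains $C\log(3/2)$ per step, which must exceed the accumulated additive constants plus $\log 4$; this is precisely how $C$ is pinned down in \eqref{eq21}.

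Second, for the term $k(\gamma_L^2-\eta^2)^2$: bounding the fluctuation $\E\exp\bigl(c\lambda^2 k(\gamma_L^2-\gamma^2)^2\bigr)$ by a constant is fine --- indeed $\sum_{i\le k}(\e_{\pi(i)}^2-\gamma^2)$ is exactly the ``$W_k$'' of the squared increments, so the second inequality of \eqref{eq9} together with \eqref{chi} gives a bounded factor, a legitimate alternative to the paper's negative-association/Hoeffding argument \eqref{eq:second.expectation}. But the bounded factor is not the crux. The crux is that the deterministic part must return with coefficient \emph{strictly less} than $K_2$, so that adding the midpoint contribution $2c_2$ from \eqref{eq27} still fits under $K_2$. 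With the plain split $(x+y)^2\le 2x^2+2y^2$, after the two Cauchy--Schwarz steps the deterministic part comes back as roughly $2K_2\lambda^2 k(\gamma^2-\eta^2)^2\ge K_2\lambda^2 n(\gamma^2-\eta^2)^2$ (and up to $\frac{4}{3}K_2\lambda^2 n(\gamma^2-\eta^2)^2$ if one only uses $k\le 2n/3$), so the recursion does not contract and no choice of $K_2$ closes it. You need either a weighted split $(x+y)^2\le(1+\delta)x^2+(1+\delta^{-1})y^2$ with small $\delta$, exploiting $k/n\approx 1/2$, or the paper's sharper direct moment-generating-function bound, which yields coefficient $\frac{8}{9}K_2$ and then forces $K_2=18c_2$ as in \eqref{eq21}. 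Saying the remedy ``yields a bounded multiplicative factor absorbable into $C\log n$'' addresses only the fluctuation factor, not the reproduction of $K_2$, which is the step the proof actually hinges on (your $K_1$ accounting via Lemma \ref{lem3.5'} is fine, since the $3/4$ there provides the needed contraction).
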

\begin{proof}
As the result holds trivially for $\lambda \le 0$ we need consider only $\lambda > 0$. Also, as $W_0=0$ and $Z_0=0$ by convention it suffices to consider the maximum over $1 \le i \le n$ in \eqref{eq20}.
We use Theorem \ref{thm3.2'} and induction to prove the theorem.

Recall the constants $\alpha_1$ from Lemma $\ref{lem3.5'}$ depending only on ${\cal A}$, and $c_1, c_2$ and $\theta_2$ from Theorem $\ref{thm3.2'}$, depending only on ${\cal A}$ and $\nu$.
With $B$ given in \eqref{def:B.is.max},
letting $\theta_5$ be the unique positive solution to
\bea\label{theta1}
\frac{1}{\sqrt{1- B^4 \theta^2/2}}=\frac{4}{3},
\ena
depending only on ${\cal A}$. We will demonstrate the claim holds with
\bea\label{eq21}
C = \frac{2 + \log 4}{\log (3/2)}, \quad 
K_1=8c_1, \quad K_2=18c_2 \qmq{and} \lambda_0 = \sqrt {\frac{\alpha_1}{32c_1}} \wedge \frac{\theta_2}{2} \wedge \frac{\theta_5}{\sqrt {72c_2}}.
\ena

Note that any multiset $\pmb{\e}=\{\e_1,\ldots,\e_n\}$ of elements of ${\cal A}$ lies in exactly one set of the form
\beas
{\cal B}^n(a,b) = \{ \{\e_1, \e_2, \dots, \e_n\}: \sum_{i=1}^n \e_i = a, \frac{1}{n}\sum_{i=1}^n \e_i^2=b^2 \}
\enas
as $a$ and $b$ range over all pairs of feasible values of $S_n$ and $\gamma$, respectively. Fix one such feasible pair $a,b$, which may be notationally suppressed when clear from context, let $\pmb{\e} \in {\cal B}^n(a,b)$ be arbitrary and fix any value $\eta>0$.

With $f_{\pmb{\e}}^n(\s)$ the probability mass function of $(S_1,\ldots,S_n)$ given in \eqref{eq23}
and $\phi^n (\mz)$ the probability density function of a Gaussian random vector $(Z_1, \dots, Z_n)$ with mean zero and covariance \eqref{eq19}, we show that for each $n \ge 1$, we can construct a joint probability function $\rho_{\pmb{\e}}^n(\s, \mz)$ on $\A_{\pmb{\e}}^n \times \mathbb{R}^n$ having the desired marginals
\bea\label{eq24}
\sum_{\s \in \A_{\pmb{\e}}^n} \rho_{\pmb{\e}}^n(\s, \mz) = \phi^n (\mz) \qmq{and} \int_{\mathbb{R}^n} \rho_{\pmb{\e}}^n(\s, \mz) d\mz = f_{\pmb{\e}}^n(\s)
\ena
and satisfying the exponential bound
\begin{multline} \label{rho.mgf.bound}
\lefteqn{\int_{{\mathbb{R}^n}} \sum_{\s \in \A_{\pmb{\e}}^n} \left[
\exp\left(\lambda \max_{1 \le i \le n} \Bvert s_i - \frac{ia}{n} -\eta z_i \Bvert \right) \rho_{\pmb{\e}}^n(\s, \mz)\right] d\mz} \\
\le \exp\left(C \log n + \frac{K_1 \lambda^2 a^2}{n} +K_2\lambda^2 n(b^2-\eta^2)^2\right) 
\qmq{for all $\lambda \in (0,\lambda_0]$,}
\end{multline}
for all $\eta \ge \nu$, with $C,K_1,K_2$ and $\lambda_0$ as in \eqref{eq21}, with $C$ universal and the latter three constants depending only on ${\cal A}$ and $\nu$.

We will prove the claim by induction on $n$. For $n=1$ we note that $W_1=0$ by \eqref{eq:defWk} and $Z_1=0$ by convention, since it has mean zero and covariance given by \eqref{eq19}. Hence \eqref{eq20} holds for $n=1$ for all $C$, all nonnegative $K_1,K_2$, and all $\lambda_{0}$, and in particular for the set of constants specified in \eqref{eq21}.

Given $n \ge 2$, suppose that for all $l= 1,2,\dots, n-1$ and all multisubsets $\pmb{\zeta}$ of ${\cal A}$ of size $l$ we can construct $\rho_{\pmb{\zeta}}^l(\s, \mz)$ satisfying \eqref{eq24} and \eqref{rho.mgf.bound}.
Take $k=[n/2]$, 
let $\sqcup$ denote multiset union and define the sets
\begin{multline*}
{{\cal S}_{\pmb{\e}}^{n,k} = \{s:  \sum_{\e \in \pmb{\e}_1}\e=s \text{  for some  } \pmb{\e}_1, \pmb{\e}_2 \text{ such that } |\pmb{\e}_1|=k,  \pmb{\e}_1 \sqcup \pmb{\e}_2= \pmb{\e}\},} \qm{and}\\
{\cal B}_{{\pmb{\e}}}^{n,{k}}(s)  = \{(\pmb{\e}_1, \pmb{\e}_2): \sum_{\e \in \pmb{\e}_1}\e=s,  |\pmb{\e_1}|=k, \pmb{\e}_1 \sqcup \pmb{\e}_2= \pmb{\e}\} \qm{for $s \in {\cal S}_{\pmb{\e}}^{n,k}$}.
\end{multline*}
That is, ${\cal S}_{\pmb{\e}}^{n,k}$ is the set of all feasible values at time $k$ of a path having increments $\pmb{\e}$, and ${\cal B}_{{\pmb{\e}}}^{n,{k}}(s)$ is the set of all ways of dividing the $n$ increments $\pmb{\e}$ into sets of sizes $k$ and $n-k$ so that the path at time $k$ takes the value $s$.
Counting the number of paths that take the value
$s    \in {\cal S}_{\pmb{\e}}^{n,k}   $
at time $k$ shows that $g_{\pmb{\e}}^{n,k}(s)$, the marginal density of $S_k$ in $f_{\pmb{\e}}^n(\s)$, is given by
\bea\label{eq25}
g_{\pmb{\e}}^{n,k}(s)= \frac{ \sum_{(\pmb{\zeta}_1, \pmb{\zeta}_2) \in {\cal B}_{{\pmb{\e}}}^{n,{k}}(s)} |\A_{\pmb{\zeta}_1}^k| |\A_{\pmb{\zeta}_2}^{n-k}|}{|\A_{\pmb{\e}}^n|}.
\ena 

Similarly, let $h^{n,k}(z)$ denote the marginal density function of $Z_k$ in $\phi^n (\mz)$, that of the Gaussian distribution with mean zero and variance $k(n-k)/n$. By Theorem \ref{thm3.2'},
there exists a joint density function $\psi_{\pmb{\e}}^{n,k}(s,z)$ on ${{\cal S}_{\pmb{\e}}^{n,k}}\times \mathbb{R}$ and positive constants $c_1, c_2$ and $\theta_2$, depending only on ${\cal A}$ and $\nu$, such that
 \bea\label{eq26}
 \int \psi_{\pmb{\e}}^{n,k}(s,z) dz = g_{\pmb{\e}}^{n,k}(s), \quad \sum_{s \in {{\cal S}_{\pmb{\e}}^{n,k}} } \psi_{\pmb{\e}}^{n,k}(s,z) = h^{n,k}(z),
 \ena
 and for $\theta \le \theta_2$ and $\eta \ge \nu$,
 \bea\label{eq27}
 \int \sum_{s \in {{\cal S}_{\pmb{\e}}^{n,k}} } \left[\exp\left(\theta \Bvert s-\frac{ka}{n} -\eta z \Bvert \right)\psi_{\pmb{\e}}^{n,k}(s,z)\right]  dz \le \exp\left(3 + \frac{c_1\theta^2 a^2}{n} +c_2\theta^2n(b^2-\eta^2)^2\right).
 \ena

For $s {\in {\cal S}_{\pmb{\e}}^{n,k}}, z \in \mathbb{R}$, and recalling the definition \eqref{eq:def.incrementsfrompath} of $\pmb{\e}^{\s}$,
$\s^1,\s^2$ such that $
(\pmb{\e}^{\s^1},\pmb{\e}^{\s^2}) 
\in B_{\pmb{\e}}^{n,k}(s)$, 
$\mz^1 \in \mathbb{R}^k$ and $\mz^2 \in \mathbb{R}^{n-k}$,
let
 \begin{equation}
\label{eq28}
\gamma_{\pmb{\e}}^n(s,z,\s^1,\mz^1, \s^2, \mz^2) = \psi_{\pmb{\e}}^{n,k}(s,z) P_{\pmb{\e},s}(\pmb{\e}^{\s^1}, \pmb{\e}^{\s^2})\rho_{\pmb{\e}^{\s^1}}^k(\s^1, \mz^1)\rho_{\pmb{\e}^{\s^2}}^{n-k}(\s^2, \mz^2)
\end{equation}
where
\beas
P_{\pmb{\e},s}(\pmb{\e}_1, \pmb{\e}_2)=\frac{|\A_{\pmb{\e}_1}^k| |\A_{\pmb{\e}_2}^{n-k}|}
 {\sum_{(\pmb{\zeta}_1, \pmb{\zeta}_2) \in {B^{n,k}_{\pmb{\e}}(s)}}|\A_{{\pmb{\zeta}}_1}^k| |\A_{{\pmb{\zeta}}_2}^{n-k}|}{\mathds{1}}((\pmb{\e}_{1}, \pmb{\e}_{2}) \in { {\cal B}_{\pmb{\e}}^{n,k}(s)}).
 \enas

Interpreting \eqref{eq28} in terms of a construction, one first samples the joint values $s$ and $z$ of the coupled random walk and Gaussian path at time $k$, then chooses increments corresponding to $\s^1$ and $\s^2$, the first and last half of the walk according to their likelihood over the choices of those whose increments over the first half of the walk sum to $s$, and whose union of increments over both halves must be $\pmb{\e}$, and then samples coupled values of the paths with discrete Brownian bridges before and after time $k$.

One may verify that $\gamma_{\pmb{\e}}^n$ is a density function by integrating over $\mz^1$ and $\mz^2$ using the second equality in \eqref{eq24} followed by applying the second equality in \eqref{eq23}, integrating over $z$, and then
summing over all $\s^1$ and $\s^2$ and $s$, this last operation being equivalent to summing over all paths $\s$ with increments $\pmb{\e}$, see  \eqref{eq:also.verifies.density} and the explanation following.

Now, let $(S, Z, \mathbf{S}^1, \mathbf{Z}^1, \mathbf{S}^2, \mathbf{Z}^2)$ be a random vector with density $\gamma_{\pmb{\e}}^n$ where $\mathbf{S}^1 = {(S_i^1)}_{1 \le i \le k}$, $\mathbf{S}^2 = {(S_i^2)}_{1 \le i \le n- k}$ and $\mathbf{Z}^1 = {(Z_i^1)}_{1 \le i \le k}$, $\mathbf{Z}^2 = {(Z_i^2)}_{1 \le i \le n-k}$.  Let  $\mathbf{S}$ be obtained by `piecing' the paths $\mathbf{S}^1$ and $\mathbf{S}^2$ together at time $k$ according to the rule
\bea\label{U_i}
S_i = \left\{
\begin{array}{cc}
	S_{i}^1 & 1 \le i \le k \\
	S+S_{i-k}^2 & k<i \le n,
\end{array}
\right.
\ena
here noting $S_k= S$,
and define $\mathbf{Z}$ by
\bea \label{eq:defYi}
Z_i = \left\{
\begin{array}{cc}
{Z}_i^1 + \frac{i}{k} Z & 1 \le i \le k \\
 {Z}_{i-k}^2 + \frac{n-i}{n-k} Z & k< i \le n,
\end{array}
\right.
\ena
here noting likewise that $Z_k=Z$, since $Z_k^1= 0$. Now as in \cite{chatterjee2012new}, we demonstrate that 
$\rho_{\pmb{\e}}^n({\textbf s},{\textbf z})$, the joint density of $(\mathbf{S}, \mathbf{Z})$, achieves the desired marginals \eqref{eq24} and exponential bound
\eqref{rho.mgf.bound}.

\noindent 1. \textit{Marginal distribution of} $\mathbf{S}$.
Let $\pmb{s}$ be the path constructed from $s, \s^1$ and $\s^2$ as $\mathbf{S}$ is constructed from $S,\mathbf{S}^1$ and $\mathbf{S}^2$ in \eqref{U_i}. 
Note that 
\beas
\{\s:\s \in {\cal A}_{\pmb{\e}}^n\} = \{\s: (\pmb{\e^{\s^1}},\pmb{\e^{\s^{2}}}) \in {\cal B}_{\pmb{\e}}^{n,k}(s_k)\},
\enas
and that $S_k=S$ almost surely. Hence, if $\mathbf{S} \not \in {\cal A}_{\pmb{\e}}^n$ then from \eqref{eq28} $\mathbf{S}$ has probability zero.
For the marginal of $\gamma_{\pmb{\e}}^n$ to be non-zero on $s,\s^1,\s^2$, first $s$ must be a feasible value at time $k$ for a path with increments $\pmb{\e}$, then $\s^1$ must be a path of increments that attains the value $s$ at time $k$, and finally the collection of increments determined by $\s^1$ and $\s^2$ must match the given set $\pmb{\e}$ of increments. In this case we obtain from \eqref{eq24}, \eqref{eq26} and \eqref{eq25}, that the marginal distribution of $(S,\mathbf{S}^1,\mathbf{S}^2)$ is given by
\begin{equation}\label{eq:also.verifies.density}
\begin{split}
&  \int \gamma_{\pmb{\e}}^n(s,z,\s^1,\mz^1, \s^2, \mz^2) d\mz^2  d\mz^1  dz \\
&= g_{\pmb{\e}}^{n,k}(s) P_{\pmb{\e},s}(\pmb{\e}^{\s^1}, \pmb{\e}^{\s^2}) f_{{\pmb{\e}}^{\s^1}}^k (\s^1) f_{{\pmb{\e}}^{\s^2}}^{n-k}(\s^2)\\
&= \frac{ \sum_{(\pmb{\zeta}_1, \pmb{\zeta}_2) \in {{\cal B}^{n,k}_{\pmb{\e}}(s)}  } |\A_{\pmb{\zeta}_1}^k| |\A_{\pmb{\zeta}_2}^{n-k}|}{ |\A_{\pmb{\e}}^n|}\frac{|\A_{\pmb{\e}^{\s^1}}^k| |\A_{\pmb{\e}^{\s^2}}^{n-k}|}
 {\sum_{(\pmb{\zeta}_1, \pmb{\zeta}_2) \in {{\cal B}^{n,k}_{\pmb{\e}}(s)}   } |\A_{\pmb{\zeta}_1}^k| |\A_{\pmb{\zeta}_2}^{n-k}|} \frac{1}{|\A_{{\pmb{\e}}^{\s^1}}^k||\A_{{\pmb{\e}}^{\s^2}}^{n-k}|} \\
& = \frac{1}{|\A_{\pmb{\e}}^n|}\\
&= f_{\pmb{\e}}^n(\pmb{s}).
\end{split}
\end{equation}
Now observing that \eqref{U_i} gives a one-to-one correspondence between $(S, \mathbf{S}^1, \mathbf{S}^2)$ and $\mathbf{S}$
we find that $\mathbf{S}$ has marginal density $f_{\pmb{\e}}^n(\pmb{s})$ as in \eqref{eq23}.\\
\\
2. \textit{Marginal distribution of} $\mathbf{Z}$. Consider
$
\A_{\pmb{\e_1}}^k \times \A_{\pmb{\e_2}}^{n-k},
$ 
the set of all pairs of paths $(\s^1, \s^2)$ with increments $\pmb{\e}_1$ and $\pmb{\e}_2$ respectively. 
 Using \eqref{eq24} and \eqref{eq26}, and noting that $(\pmb{\e}^{\s^1}, \pmb{\e}^{\s^2})=(\pmb{\e}_1,\pmb{\e}_2)$ for $(\s^1,\s^2) \in \A_{\pmb{\e_1}}^k \times \A_{\pmb{\e_2}}^{n-k}$, the marginal distribution of $Z$, $\mathbf{Z}^1$, $\mathbf{Z}^2$ is given by
\beas 
\lefteqn{ \sum_{s   {\in {\cal S}_{\pmb{\e}}^{n,k}}  }~ \sum_{(\pmb{\e}_1,\pmb{\e}_2)  {\in {\cal B}_{\pmb{\e}}^{n,k}(s)}   }~ \sum_{(\s^1,\s^2)\in {\A_{\pmb{\e_1}}^k \times \A_{\pmb{\e_2}}^{n-k}}} \gamma_{\pmb{\e}}^n(s,z,\s^1,\mz^2, \s^2, \mz^2)} \nonumber \\
& =& \sum_{s  {\in {\cal S}_{\pmb{\e}}^{n,k}}  } \psi_{\pmb{\e}}^{n,k}(s,z)  \left[ \sum\limits_{(\pmb{\e}_1,\pmb{\e}_2) \in  {{\cal B}_{\pmb{\e}}^{n,k}(s)}    } P_{\pmb{\e},s}(\pmb{\e}_1,\pmb{\e}_2) \sum_{(\s^1,\s^2)\in {\A_{\pmb{\e_1}}^k \times \A_{\pmb{\e_2}}^{n-k}}}
\rho_{\pmb{\e}_1}^k(\s^1, \mz^1)\rho_{\pmb{\e}_2}^{n-k}(\s^2, \mz^2)  \right] \nonumber\\
& =&\sum_{s {\in {\cal S}_{\pmb{\e}}^{n,k}} } \psi_{\pmb{\e}}^{n,k}(s,z)  \left[ \sum\limits_{(\pmb{\e}_1,\pmb{\e}_2) \in  {{\cal B}_{\pmb{\e}}^{n,k}(s)}      } P_{\pmb{\e},s}(\pmb{\e}_1,\pmb{\e}_2) \sum_{\s^1 \in \A_{ \pmb{\e}_1}^k}
\rho_{\pmb{\e}_1}^k(\s^1, \mz^1)  \sum_{\s^2 \in \A_{\pmb{\e}_2}^{n-k}} \rho_{\pmb{\e}_2}^{n-k}(\s^2, \mz^2)  \right] \nonumber \\
& =&\sum_{s {\in {\cal S}_{\pmb{\e}}^{n,k}} } \psi_{\pmb{\e}}^{n,k}(s,z)  \left[
\sum\limits_{(\pmb{\e}_1,\pmb{\e}_2) \in {{\cal B}_{\pmb{\e}}^{n,k}(s)}     } P_{\pmb{\e},s}(\pmb{\e}_1,\pmb{\e}_2)
\phi^k(\mz^1)\phi^{n-k}(\mz^2)   \right] \nonumber \\
&=& \phi^k(\mz^1)\phi^{n-k}(\mz^2) \sum_{s} \psi_{\pmb{\e}}^{n,k}(s,z) \nonumber\\
& =& \phi^{n-k}(\mz^2)  \phi^k(\mz^1) h^{n,k}(z)
\enas
where we have used that $\sum_{(\pmb{\e}_1,\pmb{\e}_2) \in {\cal B}^{n,k}_{\pmb{\e}}(s)} 
P_{\pmb{\e},s}(\pmb{\e}_1,\pmb{\e}_2)=1.$
Hence $Z$, $\mathbf{Z}^1$ and $\mathbf{Z}^2$ are independent with densities $h^{n,k}(z)$, $\phi^k({{\textbf z}^1})$ and $\phi^{n-k}({\textbf z}^2)$ respectively, implying that $\mathbf{Z}$ given by \eqref{eq:defYi} is a multivariate mean zero Gaussian random vector. As in \cite{chatterjee2012new}, one can verify that $\mathbf{Z}$ has covariances given by \eqref{eq19}, and hence $\mathbf{Z} \sim \phi^n({\textbf z}).$ \\ 
\\ 3.\textit{The exponential bound.} For $1\le i\le n$, letting
\beas
W_i = S_i - \frac{ia}{n},
\enas
we show that 
\beas
\E \exp(\lambda \max_{1 \le i\le n} |W_i - \eta Z_i|) \le \exp \left( C\log n +\frac{K_1 \lambda^2 a}{n} +K_2\lambda^2 n(b^2-\eta^2)^2\right) \qmq{for $\lambda \in (0,\lambda_0]$}
\enas
where $C, K_1,K_2$ and $\lambda_0$ are as in \eqref{eq21}. 
We continue to  proceed as in \cite{chatterjee2012new}.

Again writing $S$ for $S_k$, let
\beas
T_L := \max_{1 \le i\le k}\Bvert S_i^1 - \frac{iS}{k} - \eta Z_i^1 \Bvert, T_R := \max_{k < i \le n}\Bvert S_{i-k}^2 - \frac{i-k}{n-k}(a-S) - \eta Z_{i-k}^2 \Bvert, 
\enas
and
\beas
T := \Bvert S - \frac{ka}{n} -\eta Z \Bvert.
\enas
Note that when $1 \le i\le k$ we have
\begin{equation*}
\begin{split}
|W_i - \eta Z_i| & = \Bvert S_i^1- \frac{ia}{n} - \eta \left(Z_i^1 + \frac{iZ}{k} \right) \Bvert \\
& \le  \Bvert S_i^1- \frac{iS}{k} -\eta Z_i^1 \Bvert +  \Bvert \frac{iS}{k} - \frac{ia}{n} -  \frac{i}{k}\eta Z\Bvert\\
& \le T_L + \frac{i}{k} T\le T_L+T.
\end{split}
\end{equation*}
Similarly for $k <i \le n$ one can verify
$|W_i - \eta Z_i| \le T_R+T$, proving
\beas 
\max_{1 \le i\le n}|W_i - \eta Z_i| \le \max\{T_L+T, T_R+T\}.
\enas
Now fixing $\lambda \le \lambda_0$, the inequality $\exp(x \vee y) \le e^x + e^y$ yields
\bea\label{eq30}
\exp(\lambda \max_{1 \le i\le n} |W_i - \eta Z_i|) \le \exp(\lambda T_L +\lambda T ) + \exp(\lambda T_R + \lambda T).
\ena
To prove that the exponential bound holds, we develop inequalities on the expectation of the two quantities on the right hand side of \eqref{eq30}, starting with the expression involving $T_L$.

Note that $\pmb{\e}^{\s^1}$ determines $S$, and since $\pmb{\e}$ is fixed $\pmb{\e}^{\s^2}$ is also determined, so by \eqref{eq28} the conditional density of $(\mathbf{S}^1, \mathbf{Z}^1)$ given $(\pmb{\e}^{\mathbf{S}^1}, Z)$ is $\rho_{\pmb{\e}^{\mathbf{S}^1}}^k(\s^1,{{\textbf z}^1})$. Now using that the moment generating functions of $T_L$ and $T$ are finite everywhere and that $T$ is a function of $\{S,Z\}$, invoking the induction hypothesis and applying the Cauchy-Schwarz inequality twice, with $\gamma_1^2=(1/k)\sum_{i=1}^k \e_{\pi(i)}^2$ we obtain
\begin{multline} \label{eq:Cauchy.twice}
\E \exp(\lambda T_L +\lambda T) = \E \left[\E \left( \exp(\lambda T_L)|\pmb{\e}^{\mathbf{S}^1},Z\right) \exp(\lambda T) \right] \\ 
 \le \left[ \E \left(\E \left( \exp(\lambda T_L)|\pmb{\e}^{\mathbf{S}^1},Z\right)^2 \right) \E(\exp(2\lambda T)) \right]^{1/2}\\
 \le \exp(C\log k) \left[ \E \exp \left( \frac{2K_1\lambda^2 S^2}{k} +2K_2 \lambda^2 k(\gamma_1^2-\eta^2)^2 \right)  \E \exp(2\lambda T) \right]^{1/2}\\
  \le \exp(C\log k) \left[\E \exp\left(\frac{4K_1 \lambda^2 S^2}{k}\right)\E \exp\left(4K_2\lambda^2 k(\gamma_1^2-\eta^2)^2 \right)\right]^{1/4} \left(\E \exp(2\lambda T)\right)^{1/2}.
\end{multline}

For the first expectation in \eqref{eq:Cauchy.twice},  \eqref{eq21} implies that $0 \le 4K_1 \lambda^2 \le \alpha_1$, and as $|2k-n|\le 1$ we may invoke Lemma $\ref{lem3.5'}$ to yield
\bea \label{eq:first.expectation}
\E \exp\left( \frac{4K_1 \lambda^2 S^2}{k}\right) \le \exp\left( 1 + \frac{3 K_1\lambda^2 a^2}{n}\right).
\ena
For the second expectation in \eqref{eq:Cauchy.twice}, recalling the definition of $\gamma_1^2$,
\bea\label{etabd}
\E \exp\left(4K_2 \lambda^2 k(\gamma_1^2-\eta^2)^2\right)=\E \exp \left(4K_2\lambda^2 \frac{1}{k}{\left(\sum\limits_{i=1}^k (\e_{\pi(i)}^2-\eta^2)\right)^2}\right) = \E \exp \left(\theta^2 \frac{U_k^2}{k}\right), 
\ena
where  $\theta=2 \lambda \sqrt{K_2}$, and we write 
\beas
U_k = \sum_{i=1}^k (\e_{\pi(i)}^2-\eta^2)  = \sum_{i=1}^n \left( \e_i^2 \mathds{1}_{i \in \pi([k])}-\frac{k}{n}\eta^2\right)=\sum_{i=1}^n a_i,
\enas
where $[k]=\{1,\ldots,k\} $ so that $\pi([k])=\{\pi(i):i=1,2, \dots, k\}$, and $a_i=\e_i^2 \mathds{1}_{i \in \pi([k])}-(k/n)\eta^2$.

To bound \eqref{etabd}, we will argue as in Lemma \ref{hoeff}. Observe that for $V$ a standard normal random variable independent of $U_k$,
\begin{equation*}
\begin{split}
\E \exp \left(\theta^2 \frac{U_k^2}{k}\right) &= \E \exp \left( \sqrt{2} \theta \frac{ V}{\sqrt{k}}U_k \right) \\
&=\E \exp \left( \sqrt{2} \theta \frac{|V| \text{sgn}(V)}{\sqrt{k}}U_k \right)\\
 &= \E \exp \left( \sqrt{2} \theta  \frac{|V|}{\sqrt{k}}U_k \Bvert \text{sgn}(V)=1\right)P(\text{sgn}(V)=1) \\
&\qquad + \E \exp\left( \sqrt{2} \theta  \frac{|V|}{\sqrt{k}} (-U_k)\Bvert \text{sgn}(V)=-1\right)P(\text{sgn}(V)=-1).
\end{split}
\end{equation*}
Now using the independence of $|V|$ and $\text{sgn}(V)$, and that $\text{sgn}(V)$ is a symmetric $\pm 1$ random variable, we obtain
\bea\label{symm}
\E \exp \left(\theta^2 \frac{U_k^2}{k}\right)=\frac{1}{2}\left[\E \exp \left( \sqrt{2} \theta U_k \frac{|V|}{\sqrt{k}}\right)+ \E \exp\left( \sqrt{2} \theta (-U_k) \frac{|V|}{\sqrt{k}}\right)\right]. 
\ena

 Recall that random variables $X_1, X_2, \dots, X_n$ are said to be negatively associated, see  \cite{joag1983negative}, if for any two disjoint index sets $I$ and $J$, 
\bea \label{fg.inc.dec}
\E[f(X_i, i\in I)g(X_j, j\in J)] \le \E[f(X_i, i\in I)]\E[g(X_j, j\in J)] 
\ena
for all coordinatewise nondecreasing functions $f: \mathbb{R}^{|I|} \to \mathbb{R}$ and $g: \mathbb{R}^{|J|} \to \mathbb{R}$. 

Let $X_1,\ldots,X_n$ be negatively associated. It is immediate that $aX_1+b,\ldots,aX_n+b$ are negatively associated for all $a \ge 0$ and $b \in \mathbb{R}$. In addition, letting $Y_i=-X_i$ for all $i=1,\ldots,n$,  for $f$ and $g$ coordinatewise nondecreasing functions and $I$ and $J$ disjoint index sets, as $-f(-\cdot)$ is coordinatewise nondecreasing, we have
\begin{multline*}
\E[f(Y_i, i\in I)g(Y_j, j\in J)] = \E[(-f(-X_i, i\in I))(-g(-X_j, j\in J))] \\
\le 
\E[(-f(-X_i, i\in I))]\E[(-g(-X_j, j\in J))] 
= \E[f(Y_i, i\in I)]\E[g(Y_j, j\in J)],
\end{multline*}
demonstrating that $-X_1,\ldots,-X_n$ are negatively associated. Combining these two facts, $aX_1+b,\ldots,aX_n+b$ are negatively associated for all $a \in \mathbb{R}$ and $b \in \mathbb{R}$.
By a direct inductive argument on \eqref{fg.inc.dec},  
\bea\label{ind}
\E\left[\prod\limits_{i=1}^n f_i (X_i)\right] \le \prod\limits_{i=1}^n \E\left[ f_i (X_i)\right]
\ena
whenever the functions $f_i, i=1,2, \dots, n$ are all nondecreasing.

By Theorem 2.11 of  \cite{joag1983negative}, taking the real numbers in Definition 2.10 there to consist of $k$ ones and $n-k$ zeros, the indicators $\mathds{1}_{1 \in \pi([k])}, \ldots, \mathds{1}_{n \in \pi([k])}$
are negatively associated; hence so are 
$a_1, \ldots, a_n$ and $-a_1, \ldots, -a_n$.
Thus, by \eqref{ind}, we have 
\begin{multline}
\label{eq:E.cond.on.V}
\E\left[ \exp\left( \sqrt{2} \theta U_k \frac{|V|}{\sqrt{k}}\right)\Bvert V\right] = \E\left[ \exp\left( \sqrt{2} \theta 
\sum_{i=1}^n a_i
\frac{|V|}{\sqrt{k}}\right)\Bvert V\right]\\
\le \prod_{i=1}^n \E\left[ \exp\left( \sqrt{2} \theta 
a_i \frac{|V|}{\sqrt{k}}\right)\Bvert V\right] 
= \prod_{i=1}^k \E\left[ \exp\left( \sqrt{2} \theta 
\left( \e_{\pi(i)}^2  -\eta^2 \right)
\frac{|V|}{\sqrt{k}}\right)\Bvert V\right].
\end{multline}

Now since $-\eta^2 \le \e_{\pi(i)}^2-\eta^2 \le B^2 - \eta^2$, using Hoeffding's lemma \eqref{Hoeffding.lemma} with $\mu=b^2-\eta^2$, the mean of $\e_{\pi(i)}^2-\eta^2$, we obtain

	\begin{multline*}
\prod_{i=1}^k \E\left[\exp\left( \sqrt{2} \theta (\e_{\pi(i)}^2 - \eta^2) \frac{|V|}{\sqrt{k}}\right)\Bvert V\right]  \le \exp\left(\frac{B^4\theta^2 V^2}{4k}+\sqrt{2}\theta \mu \frac{|V|}{\sqrt k} \right)^k\\
=\exp\left(\frac{B^4\theta^2 V^2}{4}+\sqrt{2}\theta \mu \sqrt k {|V|} \right)\\
\le \exp\left(\frac{B^4\theta^2 V^2}{4}+\sqrt{2}\theta \mu \sqrt k {V} \right) + \exp\left(\frac{B^4 \theta^2 V^2}{4} + \sqrt{2}\theta \mu \sqrt{k} (-V) \right).
\end{multline*}

Using that $V$ and $-V$ have the same distribution, taking expectation in \eqref{eq:E.cond.on.V} and then applying the non-central chi square identity \eqref{chi}
yields
\begin{multline} \label{eq:na.for.a}
\E\left[ \exp\left( \sqrt{2} \theta U_k \frac{|V|}{\sqrt{k}}\right)\right] \le 2\E\left[ \exp\left(\frac{B^4\theta^2 V^2}{4}+\sqrt{2}\theta \mu \sqrt k {V} \right)\right]\\
= \frac{2}{\sqrt{1- B^4 \theta^2/2}}\exp \left(\frac{k \theta^2 \mu^2}{\sqrt{1- B^4 \theta^2/2}} \right)
\le \frac{8}{3} \exp\left(\frac{4}{3}k \theta^2 \mu^2\right)
\end{multline}
for all $0 \le \theta \le \theta_5$, by \eqref{theta1}.

Using the fact that $-a_1,\ldots,-a_n$ are negatively associated and that $-a_i$ and $a_i$ have supports over intervals of equal length for all $i=1,2,\ldots,n$, \eqref{eq:na.for.a} holds with $U_k$ replaced by $-U_k$.
Thus, by \eqref{symm},
\bea\label{Ukbd}
\E \exp \left(\theta^2 \frac{U_k^2}{k}\right) \le \frac{8}{3} \exp\left(\frac{4}{3}k \theta^2 \mu^2\right) \qmq{for $0 \le \theta\le \theta_5$.}
\ena

Using \eqref{eq21} we see that $0 \le 4K_2 \lambda^2 \le \theta_5^2$, and as $k \le \frac{2n}{3}$, by \eqref{etabd} and \eqref{Ukbd}, and recalling that $\mu=b^2-\eta^2$, we have
\begin{multline}\label{eq:second.expectation}
\E \exp\left(4K_2 \lambda^2 k(\gamma_1^2-\eta^2)^2\right) \\
\le \frac{8}{3} \exp \left(\frac{16}{3}K_2\lambda^2 k (b^2-\eta^2)^2 \right)\le 3 \exp \left(\frac{32}{9}K_2 \lambda^2 n(b^2-\eta^2)^2 \right).
\end{multline}

For the third expectation in \eqref{eq:Cauchy.twice},
again by \eqref{eq21}, $0 \le 2 \lambda \le \theta_2.$ Hence by \eqref{eq27},
\bea \label{eq:third.expectation}
\E \exp(2\lambda T) \le \exp\left(3 + \frac{4c_1 \lambda^2a^2}{n} + 4c_2 \lambda^2 n (b^2-\eta^2)^2\right).
\ena
Applying bounds \eqref{eq:first.expectation}, \eqref{eq:second.expectation} and \eqref{eq:third.expectation} in \eqref{eq:Cauchy.twice}, and setting
\beas	
Q_{12}= 1+\frac{3K_1\lambda^2 a^2}{n}+\frac{32K_2 \lambda^2 n(b^2-\eta^2)^2}{9} \qmq{and} Q_3=3 + \frac{4c_1 \lambda^2a^2}{n} + 4c_2 \lambda^2 n (b^2-\eta^2)^2, 	
\enas	
we obtain
\begin{multline*}
\E \exp(\lambda T_L + \lambda T)
\le 3^{1/4}\exp\left(C \log k + \frac{1}{4} Q_{12}+ \frac{1}{2}Q_3 \right)\\
\le 2 \exp \left(C \log k +2 + \frac{(3 K_1  + 8c_1)\lambda^2 a^2}{4n} + \frac{(8K_2+ 18c_2)}{9}\lambda^2 n(b^2-\eta^2)^2 \right).
\end{multline*}

Again by \eqref{eq21}, $3K_1 + 8c_1=4K_1$ and $8K_2 + 18c_2 =9K_2$. Since $k \le 2n/3$, we have
\beas
\log k = \log n - \log (n/k) \le \log n - \log (3/2).
\enas
Thus, using from \eqref{eq21} that $C\log(3/2)=  \log 4 + 2$,
\beas
\E \exp(\lambda T_L + \lambda T) \le 2\exp\left( C\log n - C \log (3/2) + 2 + \frac{K_1\lambda^2 a^2}{n} + K_2\lambda^2 n (b^2 - \eta^2)^2\right)\\
=\frac{1}{2}\exp\left( C\log n + \frac{K_1\lambda^2 a^2}{n} + K_2\lambda^2  n (b^2 - \eta^2)^2\right).
\enas

In like manner we obtain this same bound on $\E \exp(\lambda T_R + \lambda T)$, so \eqref{eq30}, now yields
\begin{equation*}
\exp(\lambda \max_{1 \le i\le n} |W_i - \eta Z_i|)  \le \exp\left( C\log n + \frac{K_1\lambda^2 a^2}{n} + K_2\lambda^2  n (b^2 - \eta^2)^2\right).
\end{equation*}
This step completes the induction, and the proof. \end{proof}


\noindent \textit{Proof of Theorem \ref{exchangeable}:} Let ${\cal A}$ be the set of the $r$ distinct values $\{a_1,\ldots,a_r\}$ and let $\e_1, \e_2, \dots, \e_n$ be exchangeable random variables taking values in ${\cal A}$. Let 
\beas
{\textbf M}=(M_1,\ldots,M_r) \qmq {where for $j=1,\ldots,r$ we set} M_j = \sum_{i=1}^n {\mathds{1}}(\e_i=a_j), 
\enas
the number of components of the multiset $\pmb{\e}=\{\e_1,\ldots,\e_n\}$ that take on the value $a_j$. With ${\cal L}$ denoting distribution, or law, clearly 
\beas
{\cal L}(\e_1,\e_2,\dots, \e_n) = \sum_{{\textbf m} \ge 0}{\cal L}(\e_1,\e_2,\dots, \e_n|{\textbf M}={\textbf m})P({\textbf M}={\textbf m})
\enas
where ${\textbf m}=(m_1,\ldots,m_r)$ and ${\textbf m} \ge 0$ is to be interpreted componentwise. As ${\textbf M}$ is a symmetric function of $\e_1,\e_2,\dots, \e_n$, the conditional law ${\cal L}(\e_1,\e_2,\dots, \e_n|{\textbf M}={\textbf m})$ inherits exchangeability from ${\cal L}(\e_1,\e_2,\dots, \e_n)$, that is,
\beas
{\cal L}(\e_1,\e_2,\dots, \e_n|{\textbf M}={\textbf m})=_d {\cal L}(\e_{\pi(1)},\e_{\pi(2)},\dots, \e_{\pi(n)}|{\textbf M}={\textbf m})
\enas
where $\pi$ is uniformly chosen from ${\cal P}_n$. In particular, given ${\textbf M}={\textbf m}$,
\beas
\sum_{i=1}^k \e_i =_d \sum_{i=1}^k \e_{\pi(i)} \qmq{for all $k=1,\ldots,n$}
\enas
where $=_d$ denotes equality in distribution. Hence, \eqref{eq20} of Theorem \ref{thm4.1'} yields the version of the first claim of Theorem \ref{exchangeable} when conditioning on ${\textbf M}$, and taking expectation over ${\textbf M}$ yields that result.

We now demonstrate the second claim under the assumption that $0 \not \in \A$, which together with ${\cal A}$ finite implies that
\bea \label{eq:here.is.the.trouble}
\nu=\min_{a \in {\cal A}}|a|
\ena
is positive. With this value of $\nu$ the constants $c_1, c_2$ and $\theta_2$ as given by Theorem \ref{thm3.2'} depend only on ${\cal A}$, and let $C,K_1,K_2$ and $\lambda_0$ be as given in \eqref{eq21} for this $\nu$. As $\gamma \ge \nu$, conditional on $\e_1,\ldots,\e_n$, inequality \eqref{eq20} of Theorem \ref{thm4.1'} holds for $\eta=\gamma$, and the argument is completed by taking expectation over ${\textbf M}$ as for the proof of the first claim.

For the last claim, under the hypotheses that $\e_1,\ldots,\e_n$ are i.i.d. mean zero random variables, since $K_1$ depends only on $\A$, by Lemma \ref{hoeff} there exists $\lambda>0$ depending only on $\A$ such that
\beas
\E\left( \frac{K_1 \lambda^2 S_n^2}{n}\right) \le 2.
\enas
Thus from the second claim of the theorem we obtain
\beas
\E \exp(\lambda \max_{0\le k \le n}|W_k - \sqrt{n} \gamma  B_{k/n} |) \le 2 \exp(C \log n),
\enas
and applying Markov's inequality yields 
\begin{equation*}
\begin{split}
P\left(\max_{0\le k \le n}|W_k - \sqrt{n} \gamma  B_{k/n} | \ge \lambda^{-1} C \log n + x\right) &\le \frac{\E \exp(\lambda \max_{0\le k \le n}|W_k - \sqrt{n} \gamma  B_{k/n} |)}{\exp(C \log n)}e^{-\lambda x}\\
& \le \frac{2 \exp(C\log n)}{\exp(C \log n)}e^{-\lambda x}
= 2 e^{- \lambda x}.
\end{split}
\end{equation*} 

\qed

\section{Proof of Theorem \ref{thm3}}\label{sec4}
In this final section we prove 
Theorem \ref{thm3} by first demonstrating a `finite $n$ version' of the desired result in the following lemma.

\begin{lem} \label{lem5.1'}
There exists a constant $A$ such that for every finite set ${\cal A}$ of real numbers not containing zero, there exists a constant $\lambda > 0$ such that for any positive integer $n$, any $\e, \e_1, \e_2, \dots , \e_n$ i.i.d. random variables with mean zero and variance one satisfying $\E\e^3=0$ and taking values in $\A$, and $S_k = \sum_{i=1}^k \e_i, k=1, \dots, n$, it is possible to construct a version of the sequence $(S_k)_{0 \le k\le n}$ and Gaussian random variables $(Z_k)_{0\le k\le n}$ with mean zero and ${\rm Cov}(Z_i, Z_j) = i \wedge j$ on the same probability space such that
\bea\label{other}
\E \exp(\lambda |S_n - Z_n|) \le A
\ena
and
\bea\label{maxineq}
\E \exp (\lambda \max_{0 \le k\le n} |S_k - Z_k|) \le A \exp (A \log n).
\ena
\end{lem}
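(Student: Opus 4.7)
\textit{Proof plan.} The strategy reduces \eqref{maxineq} to two separate couplings which are then assembled via the decomposition
\beas
S_k = W_k + \frac{k}{n} S_n, \qquad Z_k = B_k + \frac{k}{n} Z_n,
\enas
where $W_k = S_k - (k/n) S_n$, $Z_n \sim \mathcal{N}(0,n)$, and $B_k$ is a discrete Brownian bridge with covariance $(i \wedge j)(n - i \vee j)/n$, independent of $Z_n$. A direct computation then shows that $(Z_k)$ is a mean zero Gaussian sequence with ${\rm Cov}(Z_i, Z_j) = i \wedge j$, as required. The endpoint pair $(S_n, Z_n)$ will come from Theorem~\ref{thm3.1'} and the bridge pair $(W_k, B_k)$ from the conditional Theorem~\ref{thm4.1'}.

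First I would apply Theorem~\ref{thm3.1'} to $\e$, whose hypotheses are met since $\e$ is bounded, mean zero, variance one, with $\E \e^3 = 0$ given and $\E \e^4 < \infty$ automatic, to produce $(\pmb{\e}, Z_n)$ on a common probability space with $\E \exp(\theta_1 |S_n - Z_n|) \le 8$; this immediately gives \eqref{other}. Conditionally on $\pmb{\e}$, and independently of $Z_n$ given $\pmb{\e}$, I would then append $(W_k, B_k)_{0 \le k \le n}$ sampled from the coupling of Theorem~\ref{thm4.1'} with $\eta = 1$ and $\nu = \min_{a \in \A}|a|$, which is positive since $0 \notin \A$ and at most $1$ since $\E \e^2 = 1$. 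Because the marginal law of $B$ under Theorem~\ref{thm4.1'}'s coupling is the standard discrete Gaussian bridge for every fixed $\pmb{\e}$, $B$ is unconditionally independent of $(\pmb{\e}, Z_n)$, securing the target joint law of $(Z_k)$.

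For \eqref{maxineq}, the triangle inequality yields $\max_{0 \le k \le n}|S_k - Z_k| \le \max_{0 \le k \le n}|W_k - B_k| + |S_n - Z_n|$, and Cauchy-Schwarz splits the corresponding exponential moment into a product of two factors. The $|S_n - Z_n|$ factor is bounded via Theorem~\ref{thm3.1'} (taking $\lambda$ small enough that $2\lambda \le \theta_1$). Theorem~\ref{thm4.1'} with $\eta = 1$ bounds the bridge factor by
\beas
\exp(C \log n) \, \E \exp \left( 4 K_1 \lambda^2 \frac{S_n^2}{n} + 4 K_2 \lambda^2 \frac{T_n^2}{n} \right),
\enas
where $T_n = \sum_{i=1}^n (\e_i^2 - 1)$ and I used $n(\gamma^2 - 1)^2 = T_n^2/n$. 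Since $\e$ and $\e^2 - 1$ are bounded mean zero i.i.d.\ sequences, a further Cauchy-Schwarz step followed by Lemma~\ref{hoeff} applied to each resulting sum bounds this expectation by a universal constant provided $\lambda$ is small enough. The net estimate is of the form $A' \cdot n^{C/2}$, yielding \eqref{maxineq} after absorbing constants.

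The main obstacle is organizing the joint construction so that everything fits together cleanly: one must verify that $B$ is marginally independent of $Z_n$ even though both are coupled through $\pmb{\e}$, and that every constant entering the final bound depends only on $\A$. Independence holds because the $\mz$-marginal of the coupling density $\rho_{\pmb{\e}}^n(\s, \mz)$ from \eqref{eq24} does not depend on $\pmb{\e}$, while universality of the constants follows from $\nu = \min_{a \in \A} |a|$ being determined by $\A$ alone. The remaining inequalities are routine applications of the bounds built in Sections~\ref{sec2} and~\ref{sec3}.
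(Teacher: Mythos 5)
Your proposal is correct and the overall architecture (couple the endpoint via Theorem~\ref{thm3.1'}, sample $\pmb{\e}$ conditional on $S_n$, then attach a bridge coupling from Theorem~\ref{thm4.1'}, and reassemble $Z_k=\widetilde Z_k + (k/n)Z_n$) is exactly the one the paper uses. Where you diverge is in the choice of the scale parameter $\eta$ in Theorem~\ref{thm4.1'}. The paper takes $\eta=\gamma$ (the empirical scale), which kills the $n(\gamma^2-\eta^2)^2$ term but forces the inequality $\max_i|W_i-\widetilde Z_i| \le \max_i|W_i-\gamma\widetilde Z_i| + |\gamma-1|\max_i|\widetilde Z_i|$ and hence a separate estimate for the supremum of a Brownian bridge (via Smirnoff's law for $\max B_t$). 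You instead take $\eta=1$, which is admissible because $\E\e^2=1$ forces $\nu=\min_{a\in\A}|a|\le 1$, and then directly control the resulting term $n(\gamma^2-1)^2 = \big(\sum_i(\e_i^2-1)\big)^2/n$ via Lemma~\ref{hoeff} applied to the bounded i.i.d.\ mean-zero sum $\sum_i(\e_i^2-1)$. This trades the Gaussian extreme-value argument for a second application of the Hoeffding-type bound already developed in Section~\ref{sec2}; it is a genuine and arguably cleaner alternative, and the constants it produces still depend only on $\A$ as the lemma requires. One small notational slip: Theorem~\ref{thm3.1'} couples $(S_n,Z_n)$, not $(\pmb{\e},Z_n)$, so you must explicitly sample $\pmb{\e}$ from its conditional law given $S_n$ (independently of $Z_n$ given $S_n$) before invoking the bridge coupling; this is the factor $P(\pmb{\e}=\pmb{\e}^{\s}\mid S_n=s)$ in the paper's construction \eqref{eq32}, and your verification of the marginals should mention it. With that detail included, the argument closes.
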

\begin{proof}
As in Theorem \ref{thm4.1'} it suffices to prove the result with the maximum taken over $1 \le k \le n$.
Recall the positive constant $\theta_1$ from Theorem \ref{thm3.1'}, the values $\vartheta_{{\ell(X)}}$ from Lemma \ref{hoeff}, $B$ from \eqref{def:B.is.max}, and let $C, K_1, K_2$ and $\lambda_0$ be as in Theorem \ref{exchangeable} for $\nu = \min_{a \in {\cal A}}|a|$. Set
\bea\label{lambdabd}
\lambda = \min\left\{\frac{\theta_1}{2}, \frac{ \lambda_0}{4}, \frac{\vartheta_{{\ell(\e)}}}{4\sqrt{K_1}},\frac{\vartheta_{{\ell(\e^2)}}}{\sqrt{2}}, \frac{1}{B+1}\right\}.
\ena

Let $g^n(s)$ and $h^n(z)$ denote the mass function of $S_n$ and the density of $Z_n$ respectively; in particular $h^n(z)$ is just the ${\cal N}(0,n)$ density. By Theorem $\ref{thm3.1'}$, as $2\lambda \le \theta_1$, with ${\cal S}^n$ the support of $S_n$,
there is a joint probability function $\psi^n(s,z)$ on ${\cal S}^n \times \mathbb{R}$ such that
\bea \label{eq:psi.marginals.in.proof}
\int_{\mathbb R} \psi^n (s, z) dz = g^n (s), \quad \sum_{s \in {\cal S}^n} \psi^n (s, z) = h^n (z),
\ena
and
\bea\label{eq31}
\int_{\mathbb R} \left[ \sum_{s \in {\cal S}^n} \exp (2\lambda |s - z|) \psi^n (s, z)\right] dz \le 8.
\ena

Given any multiset of values $\pmb{\e}=\{\e_1,\ldots,\e_n\}$ from ${\cal A}$, let $\rho_{\pmb{\e}}^n({\textbf s},{\textbf z})$ be the joint density function guaranteed by Theorem $\ref{thm4.1'}$; from that result, the marginal distributions of $\s$ and ${\textbf z}$ are, respectively,  
$f_{\pmb{\e}}^n(\s)$ as in \eqref{eq23}, and $\phi^n (\mz)$, that of a mean zero Gaussian vector with covariance \eqref{eq19}.

For any $s \in {\cal S}^n$, define
\beas
{\cal B}^n(s) = \{ \{\e_1, \e_2, \dots, \e_n\}: \sum_{i=1}^n \e_i = s \}.
\enas
Now, recalling the definition \eqref{eq:def.incrementsfrompath} of $\pmb{\e}^{\s}$,
for $s \in {\cal S}^n$, $\s$ such that ${\pmb{\e}}^\s \in {\cal B}^n(s)$, $z \in \mathbb{R}$ and ${\widetilde \mz} \in \mathbb{R}^n$, let

\bea\label{eq32}
\gamma^ n (s, z, \s, {\widetilde \mz}) = \psi^n(s, z) P(\pmb{\e}={\pmb{\e}}^{\s}|S_n=s) \rho_{{\pmb{\e}}^\s}^n(\s, {\widetilde \mz}),
\ena
where the multiset $\pmb{\e}$ on the right hand side is composed of $n$ independent random variables distributed as $\e$. 
Interpreting \eqref{eq32} in terms of a construction, to obtain $(S, Z, \mathbf{S}, \mathbf{\widetilde Z})$ one first samples the joint values $S$ and $Z$ of the coupled random walk and Gaussian path at time $n$, then conditional on the terminal value $S$, one samples increments $\pmb{\e}$ consistent with the path $\s$ from their i.i.d. distribution, and finally one couples a walk ${\textbf S}$ to the discrete Brownian bridge $\mathbf{\widetilde Z}$ in such a way that a certain multiple of it and $(W_1,\ldots,W_n)$ given by 
\bea \label{def:Wi.in.proof}
W_i=S_i-\frac{i}{n}S_n
\ena
are close.

To verify that \eqref{eq32} determines a probability function, recalling \eqref{eq22}, note first that
\begin{multline*}
\sum_{\s: \e^{\s} \in {\cal B}^n(s)} P(\pmb{\e}={\pmb{\e}}^{\s}|S_n=s) \rho_{{\pmb{\e}}^\s}^n(\s, {\widetilde \mz}) \\= \sum_{\pmb{\delta} \in {\cal B}^n(s)} \sum_{\s \in {\cal A}_{\pmb{\delta}}^n}P(\pmb{\e}={\pmb{\delta}}|S_n=s) \rho_{{\pmb{\delta}}}^n(\s, {\widetilde \mz})
= \sum_{\pmb{\delta} \in {\cal B}^n(s)} P(\pmb{\e}={\pmb{\delta}}|S_n=s)
\sum_{\s \in {\cal A}_{\pmb{\delta}}^n} \rho_{{\pmb{\delta}}}^n(\s, {\widetilde \mz}) \\= \sum_{\pmb{\delta} \in {\cal B}^n(s)} P(\pmb{\e}={\pmb{\delta}}|S_n=s)
\phi^n ({\widetilde \mz}) = \phi^n ({\widetilde \mz}).
\end{multline*}
Now by \eqref{eq:psi.marginals.in.proof}, 
\bea \label{eq:marginals.z.bfz}
\sum_{s \in {\cal S}^n} \sum_{\s: \e^{\s} \in {\cal B}^n(s)}\gamma^ n (s, z, \s, {\widetilde \mz}) = h^n(z) \phi^n (\widetilde \mz),
\ena
and integrating over $z$ and ${\widetilde \mz}$ yields 1.

Let $(S, Z, \mathbf{S}, \mathbf{\widetilde Z})$ be a random vector sampled from $\gamma^n(s,z,\s,{\widetilde \mz})$, and 
define $\mathbf{Z} = (Z_1,\dots,Z_n)$ by
\beas
Z_i = \widetilde Z_i + \frac{i}{n} Z.
\enas
Using that $Z$ and $\mathbf{\widetilde Z}$ are independent by \eqref{eq:marginals.z.bfz}, and that the latter has covariance given by \eqref{eq19}, it follows that $\mathbf{Z}$ is a mean zero Gaussian random vector with ${\rm Cov}(Z_i, Z_j) = i \wedge j$. 

Regarding the marginals of $\s$,
integrating \eqref{eq32} over $z$ and ${\widetilde \mz}$, with $f_{\pmb{\e}}^n(\s)$ given by \eqref{eq23}, we obtain
\beas
\int_{\mathbb{R}^n} \int_{\mathbb{R}} \gamma^ n (s, z, \s, {\widetilde \mz}) dz d{\widetilde \mz} = g^n(s)P(\pmb{\e}={\pmb{\e}}^{\s}|S_n=s)f_{\pmb{\e^{\s}}}^n(\s) = P(\pmb{\e}={\pmb{\e}}^{\s})f_{\pmb{\e^{\s}}}^n(\s) = P(\pmb{\e}={\pmb{\e}}^{\s})\frac{1}{|{\cal A}^n_{\pmb{\e}^{\s}}|}.
\enas
The first term is the likelihood that the independently generated increments corresponding to those of $\s$, while the second term is the chance that these increments will be arranged by the uniform permutation in an order that produces $\s$. Hence, the marginal correspond to the distribution of $\mathbf{S}$.

It only remains to show that the pair $(\mathbf{S}, \mathbf{Z})$ satisfies the bounds \eqref{other} and \eqref{maxineq}.
Note that for $1 \le i \le n$, recalling \eqref{def:Wi.in.proof}, we have
\bea
|S_i - Z_i| &=& \Bvert W_i+\frac{i}{n}S - \left( \widetilde Z_i + \frac{i}{n} Z \right) \Bvert \nonumber\\
&\le& |W_i - \widetilde Z_i| + \frac{i}{n} |S - Z|. \label{eq:SiYibd}
\ena
From \eqref{eq32}, one can easily check 
that the conditional distribution of $(\mathbf{S}, \mathbf{\widetilde Z})$ given $(\pmb{\e}^{\mathbf{S}}, Z) = (\pmb{\e}, z)$ is $\rho_{\pmb{\e}}^n({\textbf s},{\tilde \mz})$.

Let ${\gamma}^2=n^{-1}\sum_{i=1}^n \e_i^2$ and recall $\nu =\min_{a \in {\cal A}}|a| >0$. As $\gamma \ge \nu$ and $4 \lambda \le \lambda_0$ 
 by \eqref{lambdabd}, we may invoke 
Theorem \ref{thm4.1'} conditional on $\{\pmb{\e},Z\}$, and choosing
$\eta=\gamma$ we obtain
\bea \label{eq:applyTheorem4.1}
\E (\exp (4 \lambda \max_{1 \le i \le n} |W_i - \gamma \widetilde Z_i|)\big| \pmb{\e}, Z)\le \exp \left( C \log n + \frac{16 K_1 \lambda^2 S_n^2}{n} \right),
\ena
with $C$ and $K_1$ depending only on $\A$. Applying the Cauchy-Schwarz inequality and \eqref{eq31}, as $S$ and $Z$ are measurable with respect to $\{\pmb{\e}, Z\}$, from \eqref{eq:SiYibd} we obtain
\begin{align}
&\E \exp(\lambda \max_{1 \le i \le n} |S_i - Z_i|)\nonumber \\
 \le &\Big[\E \Big( \E\big(\exp (\lambda \max_{1 \le i \le n} |W_i - \widetilde Z_i|)\big| \pmb{\e}, Z \big) \Big){^2} \E \exp(2\lambda |S - Z|)\Big]^{1/2}\nonumber\\
 \le & \Big[8 \E \Big( \E\big(\exp (\lambda \max_{1 \le i \le n} |W_i - \widetilde Z_i|)\big| \pmb{\e}, Z \big) \Big)^{2} \Big]^{1/2}.\label{eq:with.kappa}
\end{align}
Using conditional Jensen's inequality, the triangle inequality and the convexity of the exponential function in the first three lines below, \eqref{eq:applyTheorem4.1} yields
 \begin{align}
&\left( \E\big(\exp (\lambda \max_{1 \le i \le n} |W_i - \widetilde Z_i|)\big| \pmb{\e}, Z \big) \right)^2\nonumber\\
&\le \E\big(\exp (2\lambda \max_{1 \le i \le n} |W_i -\widetilde Z_i|)\big| \pmb{\e}, Z \big) \nonumber\\
&\le \frac{1}{2}\E\big(\exp (4\lambda \max_{1 \le i \le n} |W_i - \gamma \widetilde Z_i|)\big| \pmb{\e}, Z \big) +\frac{1}{2}\E\big(\exp (4\lambda \max_{1 \le i \le n} |\gamma \widetilde Z_i - \widetilde Z_i|)\big| \pmb{\e}, Z \big) \nonumber\\
&\le \frac{1}{2}\exp \left( C \log n + \frac{16K_1 \lambda^2 S_n^2}{n} \right) +\frac{1}{2}\E\big(\exp (4\lambda |\gamma-1|  \max_{1 \le i \le n} |\widetilde Z_i|)\big| \pmb{\e}, Z \big) \nonumber \\ 
& \le \exp(C \log n) + \frac{1}{2}\E\big(\exp (4\lambda |\gamma-1|  \max_{1 \le i \le n} |\widetilde Z_i|)\big| \pmb{\e}, Z \big).\label{eq33}
\end{align}
For the first term in the fourth line, Lemma \ref{hoeff} yields
\beas 
\E \exp  \left(\frac{16K_1 \lambda^2 S_n^2}{n} \right) \le 2,
\enas
since $\e_1$ has mean zero, $|\e_1|\le B$ in \eqref{def:B.is.max} and $4 \sqrt{K_1} \lambda \le \vartheta_{\ell(\e)}$ by \eqref{lambdabd}. 

For the second term in \eqref{eq33}, observe that conditional on  $(\pmb{\e}, Z)$, 
$\mathbf{\widetilde Z}$ is a mean zero multivariate Gaussian random vector with covariance given by \eqref{eq19}. Equivalently, conditional on  $(\pmb{\e}, Z)$, the distribution of $(\widetilde Z_i/\sqrt n)_{1 \le i \le n}$ is that of a Brownian bridge on $[0,1]$ sampled at times 
$1/n, 2/n, \dots, 1$. Thus, letting $B_t, t \in [0,1]$ be a Brownian bridge independent of $(\pmb{\e}, Z)$, since $\gamma$ is a function of $\pmb{\e}$, we have
\begin{equation*}
\begin{split}
\E&\big(\exp (4\lambda  |\gamma - 1| \max_{1 \le i \le n} |\widetilde Z_i|)\big| \pmb{\e}, Z \big)\\
= &\E\big(\exp (4\sqrt n \lambda |\gamma-1|\max_{1 \le i \le n}\frac{ |\widetilde Z_i|}{\sqrt n}) \big| \pmb{\e}, Z \big)\\
= &\E\big(\exp (4\sqrt n \lambda |\gamma - 1| \max_{t \in [n]/n} |B_t|) \big| \pmb{\e}, Z \big) \\
\le &\E\big(\exp (4\sqrt n \lambda |\gamma - 1| \max_{0 \le t \le 1} |B_t|) \big| \pmb{\e}, Z \big) \\
  \le &\E\big(\exp (4\sqrt n \lambda |\gamma  - 1| \max_{0\le t \le 1} B_t) + \exp (4\sqrt n \lambda |\gamma - 1| \max_{0\le t \le 1} (-B_t)) \big| \pmb{\e}, Z\big).
\end{split}
\end{equation*}
From \cite{smirnoff1939ecarts}, the distribution of $X=\max_{0\le t \le 1} B_t$ is given by
\beas
P(X\le x)= 1- \exp(-2x^2) \qmq{for} x\ge 0.
\enas
Using this identity, and the fact that $-B_t$ is also a Brownian bridge, it is straightforward to show that for any real number $a$, we have
\beas
\E\big(\exp (a \max_{0\le t \le 1} B_t) + \exp (a \max_{0\le t \le 1} (-B_t))  \big)\le 2+\sqrt{2\pi} a\exp(a^2/8).
\enas
Thus, since $B_t$ and $\gamma$ are respectively independent of, and a function of, $\pmb{\e}$, we obtain
\begin{align} 
\E&\big(\exp (4\lambda |\gamma-1|\max_{1 \le i \le n} |\widetilde Z_i|)\big| \pmb{\e}, Z \big)\nonumber\\
\le & 2+\sqrt{2\pi}4 \sqrt n \lambda |\gamma - 1| \exp \left(2\lambda^2 n(\gamma-1)^2\right)\nonumber\\
\le & 2 + 4 (B + 1)  \sqrt {2\pi n} \lambda \exp \left(2\lambda^2n(\gamma^2-1)^2\right)\label{eq:take.exp.over}
\end{align}
where in the last step, we used $|\gamma-1|\le B+1$ where $B$ is given by \eqref{def:B.is.max}, and that  
$\gamma \ge 0$ implies $1 \le {(\gamma +1)}^2$.\\\\
Since $\E \e_1^2=1$, we have $n(\gamma^2-1)^2= \big(\sum_{i=1}^n (\e_i^2 - \E \e_i^2)\big)^2/n$ and $\E(\e_i^2 - \E \e_i^2)=0$. As $\e^2 \le B^2$ and $0\le\sqrt{2}\lambda \le \vartheta_{{\ell(\e^2)}}$, by \eqref{lambdabd}, Lemma \ref{hoeff} yields
\beas
\E \exp \left(2\lambda^2 n(\gamma^2-1)^2\right)\le 2.
\enas
Additionally, since $\lambda (B+1)\le 1$ by \eqref{lambdabd}, taking expectation in \eqref{eq:take.exp.over} yields 
\bea\label{eq34}
\E(\exp (4\lambda |\gamma-1| \max_{1 \le i \le n} |\widetilde Z_i|))
=2 + 8(B+1)  \sqrt {2\pi n} \lambda \le 
\exp (C_1 \log n)
\ena
for some universal constant $C_1$.

Thus, by \eqref{eq:with.kappa}, \eqref{eq33} and \eqref{eq34},  we have
\begin{equation*}
\begin{split}
\E & \exp(\lambda \max_{1 \le i \le n} |S_i - Z_i|)\\
 \le &  \Big[8 \E \Big( \exp \left( C \log n \right) +\frac{1}{2}\E\big(\exp (4\lambda |\gamma-1|\max_{1 \le i \le n} |\widetilde Z_i|)\big| \pmb{\e}, Z \big) \Big] \Big) \Big]^{1/2}\\
 \le & 8^{1/2}\Big[ \exp ( C \log n) +\frac{1}{2}\exp (C_1 \log n) \Big]^{1/2}\\
 \le & A \exp(A \log n)
\end{split}
\end{equation*}
for some universal constant $A$, which we may take to be at least $8$. The proof of $\eqref{maxineq}$ is now complete. Lastly note that $\widetilde Z_n=0$ implies $Z_n=Z$, hence \eqref{eq31} yields \eqref{other} as $A \ge 8$.
\end{proof}

Theorem $\ref{thm3}$ follows from Lemma \ref{lem5.1'} in exactly the same way as Theorem 1.5 follows from Lemma 5.1 in \cite{chatterjee2012new}, noting that the reasoning applied at this step does not depend on the support of the summand variables of the random walk.\\

\section*{Acknowledgement} The authors would like to thank Sourav Chatterjee for bringing the key issues in his work \cite{chatterjee2012new} to our attention, and for many helpful discussions. The second author was partially supported by NSA grant H98230-15-1-0250.

\bibliographystyle{amsplain}

\providecommand{\bysame}{\leavevmode\hbox to3em{\hrulefill}\thinspace}
\providecommand{\MR}{\relax\ifhmode\unskip\space\fi MR }
\providecommand{\MRhref}[2]{%
  \href{http://www.ams.org/mathscinet-getitem?mr=#1}{#2}
}
\providecommand{\href}[2]{#2}

\end{document}